\numberwithin{equation}{section}
\newcommand\restr[2]{{
  \left.\kern-\nulldelimiterspace 
  #1 
  \vphantom{\big|} 
  \right|_{#2} 
 }}
\newtheorem{prop}{Proposition}[section]
\newtheorem{theo}[prop]{Theorem}
\newtheorem{lem}[prop]{Lemma}
\newtheorem{cor}[prop]{Corollary}
\theoremstyle{definition}
\newtheorem{rem}[prop]{Remark}
 \colorlet{lgray}{white!85!black}
 \colorlet{dgray}{white!45!black}
\colorlet{lred}{white!85!red}
\colorlet{dred}{white!35!red}
\colorlet{lgreen}{white!60!green}
\colorlet{dgreen}{black!30!green}
\colorlet{lpurple}{white!60!purple}
\colorlet{lblue}{white!60!blue}
\definecolor{green}{rgb}{0.1,0.8,0.1}
\definecolor{yellow}{rgb}{1.0,0.85,0.25}
\definecolor{purple}{rgb}{1.0, 0, 1.0}
\definecolor{blue}{rgb}{0, 0, 1.0}
\tikzstyle{unfused}=[line width=1.5pt, draw=lgray, arrows={->[line width=1.5pt, length = 2mm, width=2mm,dgray]}]
\tikzstyle{unfused*}=[lgray, line width=1.5pt]
\tikzstyle{fused}=[lgray, line width=4pt, arrows={->[line width=4pt, length = 4mm, width=4mm,dgray]}]
\tikzstyle{fused*}=[lgray, line width=4pt]
\tikzstyle{cont}=[lred, line width=4pt, arrows={->[line width=4pt, length = 4mm, width=4mm,dred]}]
\tikzstyle{dual}=[black, line width=1pt, dashed]
\tikzstyle{lightdual}=[black, line width=0.5pt, dashed]
\tikzstyle{cut}=[black, line width=1.0pt]
 \renewcommand{\tikz}[2]{
\begin{tikzpicture}[scale=#1,baseline=(current bounding box.center),>=Stealth]
#2
\end{tikzpicture}}
\newcommand{\tikzbase}[3]{
\begin{tikzpicture}[scale=#1,baseline={([yshift=#2]current bounding box.center)},>=Stealth]
#3
\end{tikzpicture}}
\DeclareMathOperator{\End}{End}
\DeclareMathOperator{\Span}{Span}
\def \be{\begin{equation*}}
\def \ee{\end{equation*}}
\def\({\left(}
\def\){\right)}
\def\[{\left[}
\def\]{\right]}
\def \1{\mathbbm 1}
\def \i{\mathbf{i}}
\def \Z{\mathbb Z}
\def \W{\mathcal W}
\def \R{\mathcal R}
\def \V{\mathbb V}
\def \S {{\sf S}}
\def \s {{\sf s}}
\def \FF {{\sf \widetilde{F}}}
\def \ZW{{ZW}}
\def \Zw{{Zw}}
\def \T{\mathbb T}
\def \C{\mathbb C}
\def \cc{{\sf c}}
\def \B{\mathbb B}
\def \tB{\widetilde{B}}
\def \F{\mathbb F}
\begin{document}
	
\title{Inhomogeneous spin $q$-Whittaker polynomials}

\begin{abstract}
We introduce and study an inhomogeneous generalization of the spin $q$-Whittaker polynomials 
from \cite{BW17}. These are symmetric polynomials, and we prove a branching rule, skew dual
and non-dual Cauchy identities, and an integral representation for them. Our main tool is a 
novel family of deformed Yang-Baxter equations.  
\end{abstract}

\author{Alexei Borodin and Sergei Korotkikh}

\maketitle

\tableofcontents

\section{Introduction}

\subsection*{Background} Connections between integrable lattice models and the theory of symmetric functions have been long known to be important to both domains; one might, for example, consult the following works (the list is certainly incomplete): Fomin-Kirillov \cite{FK93}, Tsilevich \cite{Tsi05}, Brubaker-Bump-Friedberg \cite{BBS09}, Zinn-Justin \cite{ZJ09}, Korff \cite{Kor11}, 
Wheeler-Zinn-Justin \cite{WZJ15, WZJ16}, Motegi-Sakai \cite{MS13}, Motegi \cite{Mot16}, Cantini-de Gier-Wheeler \cite{CdGW15}, Garbali-de Gier-Wheeler \cite{GdGW16}, Borodin \cite{Bor14, Bor17}, Borodin-Petrov \cite{BP16a, BP16b}, Borodin-Wheeler \cite{BW17, BW19}, Garbali-Wheeler \cite{GW18}, Buciumas-Scrimshaw \cite{BS20}, Mucciconi-Petrov \cite{MP20}, Petrov \cite{Pet20}.

In particular, in \cite{Bor14, BP16a, BP16b, BW17} families of \emph{spin} Hall-Littlewood and $q$-Whittaker symmetric functions were introduced. They generalized the classical families of the Hall-Littewood and $q$-Whittaker symmetric polynomials (both of which are special cases of the Macdonald symmetric functions, see Macdonald \cite{Mac95}), and the additional \emph{spin parameter} was related to the spin in the underlying integrable vertex models. It was shown, in particular, that symmetric functions from those families satisfy explicit branching rules as well as (skew) Cauchy and dual Cauchy identities.
A different version of the spin $q$-Whittaker polynomials was also recently suggested in \cite{MP20}; it appears to enjoy a few valuable properties that the original one lacked. 

The introduction of the spin Hall-Littlewood functions was prompted by their usefulness in asymptotic analysis of a variety of probabilistic systems that include certain integrable models of directed polymers in random media and exclusions processes in (1+1)-dimensions, see Borodin-Corwin-Petrov-Sasamoto \cite{BCPS13, BCPS14}, where they arose as an orthogonal basis of eigenfunctions for Hamiltonians of certain associated (`dual') quantum integrable systems. Probabilistic applications of the spin q-Whittaker polynomials were demonstrated by Bufetov-Mucciconi-Petrov \cite{BMP19} and Mucciconi-Petrov \cite{MP20}. 

In \cite{BP16b} an inhomogeneous version of the spin Hall-Littlewood functions was introduced. The inhomogeneities appear naturally from the point of view of the vertex models involved, and they are also crucial for some of the probabilistic applications, such as the analysis of the Asymmetric Simple Exclusion Process (ASEP) and the stochastic six vertex model with varying initial conditions, see Aggarwal-Borodin \cite{AB16} and Aggarwal \cite{Agg16}, and large time asymptotics of the exponential jump model of Borodin-Petrov \cite{BP17}. Curiously, while the inhomogeneities significantly change the symmetric functions, they do not affect the product factors of the (skew) Cauchy identities. 
 
The main goal of the present work is to define and study inhomogeneous spin $q$-Whittaker polynomials. 
So far it has been unclear how to do that. Even though the approach of \cite{BW17}, that allowed to obtain the homogeneous spin $q$-Whittaker polynomials from the Hall-Littlewood ones via the procedure of \emph{fusion}, is also applicable in the inhomogeneous case, the resulting functions do not appear to satisfy the dual Cauchy identity involving both $q$-Whittaker and Hall-Littlewood functions, which is 
the most natural requirement. Indeed, we show that the `correct' inhomogeneous spin $q$-Whittaker polynomials (those that do satisfy the dual Cauchy identity with the inhomogeneous spin Hall-Littlewood functions) are different from the functions naively obtained via fusion. These two families (the `naive' and the `correct' one) can be united in a (non-dual) Cauchy identity, however. 

With equal inhomogeneities our inhomogeneous polynomials degenerate to those of \cite{BW17}. It remains to be seen if the modified definitions of \cite{MP20} can be applied in the inhomogeneous setting, and if this would again lead to more desirable properties. 

In the next subsection we list the properties of these new inhomogeneous symmetric polynomials that we were able to prove. 
The key new ingredient of our approach is a \emph{deformed Yang-Baxter equation} for the vertex weights of the underlying integrable vertex model; it is also discussed below.

\subsection*{Main results.} For a pair of sequences of complex parameters $\S=(s_0, s_1, s_2, \dots)$ and $\Xi=(\xi_0, \xi_1,\xi_2,\dots)$ we define functions $\F_{\lambda/\mu}(\kappa_1, \dots, \kappa_n\mid\Xi,\S)$, which depend on $n$ variables $\kappa_1, \dots, \kappa_n$ and are labeled by pairs of partitions $\lambda,\mu$. Here are the main facts we prove about them. 
\begin{itemize}
\item The functions $\F_{\lambda/\mu}(\kappa_1, \dots, \kappa_n\mid\Xi,\S)$ are symmetric polynomials in $\kappa_1,\dots, \kappa_n$.

\item For a single variable, $\F_{\lambda/\mu}(\kappa\mid\Xi,\S)$ has a fully factorized expression. To write it down, we first define an auxiliary function $ZW_{\lambda/\mu}(\kappa\mid\Xi,\S)$ as  a partition function of a one-row vertex model of the form

\medskip
	\tikz{1}{
		\foreach\y in {2}{
			\draw[fused*] (1,\y) -- (9,\y);
			\node[right] at (9,\y) {$\dots$};
			\draw[fused] (9.6,\y) -- (10.5,\y);
		}
		\foreach\x in {1,...,3}{
			\draw[fused] (3*\x-1,1) -- (3*\x-1,3);
		}
		\node[above right] at (2,2) {\tiny $\(\sqrt{\frac{s_2\xi_2}{\kappa}}, s^{(1)}_1\)$};
		\node[above right] at (5,2) {\tiny $\(\sqrt{\frac{s_3\xi_3}{\kappa}}, s^{(1)}_2\)$};
		
		\node[above] at (8,3) {$\cdots$};
		\node[above] at (5,3) {$m_2(\lambda')$};
		\node[above] at (2,3) {$m_1(\lambda')$};
		\node[left] at (1,2) {$\lambda_1-\mu_1$};
		\node[below] at (8,1) {$\cdots$};
		\node[below] at (5,1) {$m_2(\mu')$};
		\node[below] at (2,1) {$m_1(\mu')$};
		\node[right] at (10.5,2) {$0$};
	}
\medskip\\
This partition function is the product of weights of all the vertices in this row, where each vertex has four nonnegative integral labels on the edges incident to it; the labels on the vertical edges are the multiplicities in $\lambda'$ and $\mu'$: for $j\ge 1$, $m_j(\mu')=\mu_j-\mu_{j+1}$, $m_j(\lambda')=\lambda_j-\lambda_{j+1}$;
for any $k\ge 1$ we defined $s^{(1)}_k:=\sqrt{s_k s_{k+1}\xi_{k+1}/\xi_k}$, and the weights of the vertices are given by (note that all the square roots will disappear upon substitution into these weights) 
\begin{equation*}
	\tikz{1}{
		\draw[fused] (-1,0) -- (1,0);
		\draw[fused] (0,-1) -- (0,1);
		\node[left] at (-1,0) {\tiny $j$};\node[right] at (1,0) {\tiny $l$};
		\node[below] at (0,-1) {\tiny $i$};\node[above] at (0,1) {\tiny $k$};
		\node[above right] at (0,0.1) {\tiny $(t,s)$};
	}
	=\1_{i+j=k+l}\ \1_{i\geq l}\ s^{2l}t^{-2l}\frac{(s^2/t^2;q)_{i-l}(t^2;q)_l}{(s^2;q)_i}\frac{(q;q)_i}{(q;q)_{i-l}(q;q)_l}.
\end{equation*}
Even though the row is semi-infinite, the number of vertices of weight $\ne 1$ is readily seen to be finite. 

Then the polynomial $\F_{\lambda/\mu}(\kappa\mid\Xi,\S)$ is defined by the following expression:
\begin{equation*}
	\F_{\lambda/\mu}(\kappa\mid\Xi,\S):=\frac{(-\tau_\Xi\S)^{\mu}}{(-\S)^{\lambda}}\frac{\cc_{\tau_{\Xi}\S}(\mu)}{\cc_\S(\lambda)}\,\({\kappa s_0/\xi_0}\)^{\lambda_1-\mu_1}\frac{(\kappa^{-1} s_1 \xi_1;q)_{\lambda_1-\mu_1}}{(q;q)_{\lambda_1-\mu_1}}\,ZW_{\lambda/\mu}(\kappa\mid\Xi,\S),
\end{equation*}
where $\tau_\Xi\S=(s_1^{(1)}, s_2^{(1)},\dots)$, and
\begin{equation*}
\cc_\S(\lambda):=\prod_{i\geq1}\frac{(s_i^2;q)_{\lambda_i-\lambda_{i+1}}}{(q;q)_{\lambda_i-\lambda_{i+1}}},\qquad (-\S)^{\lambda}:=\prod_{i\geq 1}(-s_{i-1})^{\lambda_i}.
\end{equation*}

\item The following branching rule holds:
\be
\F_{\lambda/\nu}(\kappa_1,\dots, \kappa_n\mid\Xi,\S)=\sum_{\mu}\F_{\lambda/\mu}(\kappa_1,\dots, \kappa_m\mid\Xi,\S)\F_{\mu/\nu}(\kappa_{m+1},\dots, \kappa_n\mid\tau^m_{\S}\Xi,\tau^m_{\Xi}\S),
\ee
where $\tau^k_{\S}\Xi=(\xi_0^{(k)}, \xi_1^{(k)}, \xi_2^{(k)},\dots)$, $\tau^k_{\Xi}\S=(s_0^{(k)}, s_1^{(k)}, s_2^{(k)},\dots)$ with
\begin{equation*}
\xi_i^{(k)}:=\sqrt{\xi_{i+k}s_{i+k}\xi_i/s_i},\qquad s_i^{(k)}:=\sqrt{s_{i+k}\xi_{i+k}s_i/\xi_i},
\end{equation*}

Together with the partition function expression for the one-variable polynomials, this allows to write the general $\F_{\lambda/\nu}(\kappa_1,\dots, \kappa_n\mid\Xi,\S)$ in terms of the partition function for the 
vertex model in Figure \ref{partitionFunction} below (see Section \ref{FconstSect}).

\item The functions $\F_{\lambda}(\kappa_1, \dots, \kappa_n\mid\Xi,\S):=\F_{\lambda/\varnothing}(\kappa_1, \dots, \kappa_n\mid\Xi,\S)$ satisfy the following stability relation:
\be
\F_{\lambda}(\kappa_1, \dots, \kappa_{n-1}, s_n\xi_n\mid \Xi,\S)=\F_{\lambda}(\kappa_1, \dots, \kappa_{n-1}\mid \Xi,\S).
\ee

\item We have the following dual skew Cauchy identity:
\begin{multline*}
\sum_\lambda \FF_{\lambda'/\nu'}^*(u_1,\dots, u_m\mid\Xi,\S)\F_{\lambda/\mu}(\kappa_1,\dots, \kappa_n\mid\Xi,\S)\\
=\prod_{i=1}^n\prod_{j=1}^m\frac{1-u_j\kappa_i}{1-u_j\xi_is_i}\sum_{\lambda}\F_{\nu/\lambda}(\kappa_1,\dots, \kappa_n\mid\Xi,\S)\FF^*_{\mu'/\lambda'}(u_1,\dots, u_m\mid\tau^n_\S\Xi,\tau^n_\Xi\S),
\end{multline*}
where $\FF_{\lambda'/\nu'}^*$ denotes a certain stable version of the inhomogeneous spin Hall-Littlewood functions described in Section \ref{HLsect} below. In the case $\mu=\nu=\varnothing$ the identity reduces to
\be
\sum_\lambda \FF_{\lambda'}^*(u_1,\dots, u_m\mid\Xi,\S)\F_{\lambda}(\kappa_1,\dots, \kappa_n\mid\Xi,\S)=\prod_{i=1}^n\prod_{j=1}^m\frac{1-u_j\kappa_i}{1-u_j\xi_is_i}.
\ee
In these dual Cauchy identities the number of nonzero terms is always finite, thus no convergence conditions are needed. 

\item The following (skew) Cauchy-type summation identity holds:
\begin{multline*}
\sum_\lambda\F^*_{\lambda/\nu}(\chi_1,\dots, \chi_m\mid\bar\S,\S)\F_{\lambda/\mu}(\kappa_1,\dots, \kappa_n\mid\S,\S)\\
=\prod_{i=1}^n\prod_{j=1}^m\frac{(\kappa_i;q)_\infty(s_{i}^2\chi_j;q)_\infty}{(s_{i}^2;q)_\infty(\kappa_i\chi_j;q)_\infty}\sum_\rho\F_{\nu/\rho}(\kappa_1,\dots,\kappa_n\mid\S,\S)\F^*_{\mu/\rho}(\chi_1,\dots,\chi_m\mid\tau^n\bar\S,\tau^n\S).
\end{multline*}
Here $\F^*_{\lambda/\mu}$ denotes a certain rescaling of the functions $\F_{\lambda/\mu}$, and we set  $\overline{\S}=(s_0^{-1}, s_1^{-1}, s_2^{-1},\dots)$. In the case $\mu=\nu=\varnothing$ one obtains the following Cauchy identity:
\begin{equation*}
\sum_\lambda\F^*_{\lambda}(\chi_1,\dots, \chi_m\mid\bar\S,\S)\F_{\lambda}(\kappa_1,\dots, \kappa_n\mid\S,\S)=\prod_{i=1}^n\prod_{j=1}^m\frac{(\kappa_i;q)_\infty(s_{i}^2\chi_j;q)_\infty}{(s_{i}^2;q)_\infty(\kappa_i\chi_j;q)_\infty}.
\end{equation*}
These identities could be either viewed as holding in suitable rings of power series or numerically under certain restrictions on the parameters, see Theorem \ref{Cauchy} below.

\item The polynomials $\F_\mu$ admit an integral representation of the form

\begin{multline*}
	\F_{\mu}(\kappa_1,\dots, \kappa_n\mid\Xi,\S)\\
	=\oint_{\mathcal C}\frac{dz_1}{2\pi\i z_1}\dots\oint_{\mathcal C}\frac{dz_k}{2\pi\i z_k}\prod_{\alpha<\beta}\frac{z_\alpha-z_\beta}{z_\alpha-qz_\beta}\prod_{\alpha=1}^k\(\frac{\xi_0^{-1}}{z_\alpha-\xi^{-1}_{\mu'_\alpha}s_{\mu'_\alpha}}\prod_{j=1}^{\mu'_\alpha-1}\frac{1-s_j\xi_jz_\alpha}{z_\alpha\xi_j-s_j}\prod_{i=1}^n\frac{1-z_\alpha\kappa_i}{1-z_\alpha\xi_is_i}\),
\end{multline*}
where $k=\mu_1$, and the integration contour $\mathcal C$ is depicted in Figure \ref{Cpict} and described immediately before Theorem \ref{integral}.
\end{itemize}

Comparing with \cite{BW17}, for $s_0=s_1=\dots=s$ and $\xi_0=\xi_1=\dots=\xi$ our functions $\FF_{\lambda/\mu}$ and $\F_{\lambda/\mu}$ reduce to the spin $q$-Whittaker and stable spin Hall-Littlewood functions from \cite{BW17}; in particular, our functions degenerate to the usual Hall-Littlewood and $q$-Whittaker polynomials when $s=0$. 

\subsection*{Deformed Yang-Baxter equations}
The key tool that allowed us to prove most of the results above is a family of identities that we call \emph{deformed Yang-Baxter equations}. Their detailed exposition can be found in Section \ref{deformationSection} below. While the conventional Yang-Baxter equations for vertex models
have the general form
\be
\R^{(12)}_{\bf{x}}\R^{(13)}_{\bf{y}}\R^{(23)}_{\bf{z}}=\R^{(23)}_{\bf{z}}\R^{(13)}_{\bf{y}}\R^{(12)}_{\bf{x}},
\ee
where $\bf{x},\bf{y},\bf{z}$ are parameters subject to certain relations (in our setting the parameters $\bf{x},\bf{y},\bf{z}$ are pairs of complex numbers), and $\R^{(12)}_{\bf{x}},\R^{(13)}_{\bf{y}},\R^{(23)}_{\bf{z}}$ are operators acting in two of the three tensor factors in a tensor product $V_1\otimes V_2\otimes V_3$ with suitable linear spaces $V_i$. It turns out that in certain situations one can add an additional degree of freedom resulting in equations of the form 
\be
\R^{(12)}_{{\bf x}(\tau)}\R^{(13)}_{{\bf y}(\tau)}\R^{(23)}_{{\bf z}(\tau)}=\R^{(23)}_{{\bf z}'(\tau)}\R^{(13)}_{{\bf y}'(\tau)}\R^{(12)}_{{\bf x}'(\tau)}.
\ee
where the subscripts ${\bf x}(\tau), {\bf y}(\tau), {\bf z}(\tau)$ and ${\bf x}'(\tau), {\bf y}'(\tau), {\bf z}'(\tau)$ now depend on a new parameter $\tau$. Note that the deformed equation has different operators in the left-hand and right-hand sides.

In this work we provide two examples of the deformed Yang-Baxter equation; one of them implies that the functions $\F_{\lambda/\mu}(\kappa_1,\dots,\kappa_n\mid\Xi,\S)$ are symmetric, while the other one is crucial for proving the Cauchy identities. Additionally, we describe a method we use to deform the previously known Yang-Baxter equations. Such deformations might be useful in other contexts. For instance, in the end of Section \ref{deformationSection} we briefly mention a different application: One can use a deformation of a higher rank Yang-Baxter equation to extend the results of \cite{BW20} on integral representations for certain observables of a colored stochastic higher spin six-vertex model. Unfortunately, so far our understanding of the deformed Yang-Baxter equation is based solely on  computational tricks we use to derive them;  we are not aware of either a representation theoretic meaning of such equations or a complete list of situations where one might expect to see them.

\subsection*{Applications to probability.} The inhomogeneous spin $q$-Whittaker functions $\F_{\lambda/\mu}(\kappa_1, \dots, \kappa_n\mid\Xi,\S)$ can be defined in terms of a new stochastic integrable vertex model with parameters attached to diagonals, see Section \ref{sqWhitSect} below. Earlier results \cite{BP16a}, \cite{BP16b}, \cite{BW20} suggest that this model could be studied via Cauchy-type identities, which would lead to explicit expressions for averages of certain natural observables. Indeed, such an approach is also applicable to our new model.
However, very recently another method of analyzing stochastic vertex models was introduced by Bufetov-Korotkikh \cite{BK20}. It is less constructive but yields more general results; its key feature is a certain local relation for $q$-moments of the height functions for the model. The adaptation of this method to the setting of the present paper leads to novel probabilistic results that include, in particular, large scale asymptotics for partition functions of a new integrable extension of the Beta polymer model of Barraquand-Corwin \cite{BC15}. A detailed exposition of these results will appear in the upcoming work \cite{Kor21} by one of the authors.

\subsection*{Layout of the paper.} In Section \ref{prelim}, we recall the definition and properties of the (stochastic) higher spin six vertex model and its extensions obtainable by fusion. In particular, we list several instances of the Yang-Baxter equation used throughout the work. In Section \ref{deformationSection}, we introduce the deformed Yang-Baxter equations; our construction there is based on certain rationality properties of the vertex weights. Section \ref{rowOperatorsSection} is devoted to \emph{row operators}. On one hand, these operators encode partition functions of single rows of a vertex model subject to certain boundary conditions, while on the other hand, the row operators capture the branching structure of our symmetric functions. In the same section we also prove exchange relations between the row operators using the (deformed) Yang-Baxter equations and a `zipper' argument. Having developed the framework of row operators, in Section \ref{sqWhitSect} we define the inhomogeneous spin $q$-Whittaker functions $\F_{\lambda/\mu}(\kappa_1, \dots, \kappa_n\mid \Xi,\S)$ and provide proofs of the above-listed  properties. Finally, in Section \ref{cauchySection} we state and prove the (skew) Cauchy identity between the functions $\F_{\lambda/\mu}(\kappa_1, \dots, \kappa_n\mid \S,\S)$ and $\F_{\lambda/\mu}(\chi_1, \dots, \chi_m\mid \overline{\S},\S)$.

\subsection*{Notation.} In this work we follow the standard notations regarding partitions: A \emph{partition} $\lambda$ is a monotone infinite sequence $(\lambda_1,\lambda_2,\lambda_3,\dots)$ of nonnegative integers satisfying
\be
\lambda_1\geq\lambda_2\geq\lambda_3\geq\dots\geq 0, \qquad |\lambda|:=\lambda_1+\lambda_2+\lambda_3+\dots<\infty.
\ee
The coordinates $\lambda_i$ are called parts of the partition. For a partition $\lambda$, its \emph{length} $l(\lambda)$ is equal to the number of nonzero parts $\lambda_i$, while $m_k(\lambda)$ is used to denote the number of parts $\lambda_i$ equal to $k$. The \emph{conjugate} $\lambda'$ of a partition $\lambda$ is defined by $\lambda'_i=\#\{j\geq 1: \lambda_j\geq 1\}$.

For a pair of partitions $\lambda,\mu$, we write $\mu\subset\lambda$ iff for any $i\geq 1$ we have $\mu_i\leq\lambda_i$. Further, we say the partition $\lambda$ \emph{interlaces} the partition $\mu$, and write $\lambda\succ\mu$, if $\lambda_i\geq\mu_i\geq\lambda_{i+1}$ for all $i\geq 1$. Note that in this case we also have $0\leq l(\lambda)-l(\mu)\leq 1$. 

Throughout the text we use the following notation for $q$-Pochhammer symbol:
\be
(x;q)_n:=\prod_{i=1}^{n}(1-xq^{i-1}),
\ee
with the convention $(x;q)_0:=1$. We also often assume that $|q|<1$, so the notation $(x;q)_{\infty}$ makes sense both numerically and formally in the space of power series in $q$.

\subsection*{Acknowledgments.} A.~B. was partially supported by the NSF grant DMS-1853981 and the Simons  Investigator program. A.~B. and S.~K. were also partially supported by the NSF FRG grant DMS-1664619.

\section{Preliminaries on vertex models and the Yang-Baxter equation}\label{prelim}
Vertex models considered in this work consist of collections of vertices with oriented edges between them. A configuration of a model is an assignment of integer labels to all edges in a way such that the \emph{(arrow) conservation law} holds: for each vertex the sum of labels of the incoming edges equals the sum of labels of the outgoing ones. To each configuration we assign a weight, which is equal to the product of local weights corresponding to each vertex and depending on the labels around the vertex. In this section we describe various local weights, as well as recall several known properties of them.

\subsection{Higher spin vertex weights.} We start with the (stochastic) higher spin six vertex model. Our exposition in this section follows \cite{BP16b} and \cite{BW17}, up to a change of notation discussed in Remark \ref{renorm} below. 

Consider vertices on a square grid with vertical edges directed upwards and having integer labels from $\Z_{\geq 0}$, and horizontal edges directed to the right and having labels from $\{0,1\}$. To each configuration of labels around a vertex we assign a \emph{vertex weight}, which is denoted by 
\vspace{1ex}\begin{equation}
\label{hsVertexDef}
\tikz{1}{
	\draw[unfused] (-1,0) -- (1,0);
	\draw[fused] (0,-1) -- (0,1);
	\node[left] at (-1,0) {\tiny $j$};\node[right] at (1,0) {\tiny $l$};
	\node[below] at (0,-1) {\tiny $i$};\node[above] at (0,1) {\tiny $k$};
	\node[above right] at (0,0) {\tiny $w^{\s}_{u;s}$};
}=w^{\s}_{u;s}(i,j;k,l)
\end{equation}
and given explicitly by the following table
\begin{align}
\label{hsValues}
\begin{array}{cccc}
\tikz{0.9}{
	\draw[unfused] (-1,0) -- (1,0);
	\draw[fused] (0,-1) -- (0,1);
	\node[left] at (-1,0) {\tiny $0$};\node[right] at (1,0) {\tiny $0$};
	\node[below] at (0,-1) {\tiny $g$};\node[above] at (0,1) {\tiny $g$};
	\node[above right] at (0,0) {\tiny $w^\s_{u;s}$};
}
\qquad
&
\tikz{0.9}{
	\draw[unfused] (-1,0) -- (1,0);
	\draw[fused] (0,-1) -- (0,1);
	\node[left] at (-1,0) {\tiny $0$};\node[right] at (1,0) {\tiny $1$};
	\node[below] at (0,-1) {\tiny $g$};\node[above] at (0,1) {\tiny $g-1$};
	\node[above right] at (0,0) {\tiny $w^\s_{u;s}$};
}
\qquad
&
\tikz{0.9}{
	\draw[unfused] (-1,0) -- (1,0);
	\draw[fused] (0,-1) -- (0,1);
	\node[left] at (-1,0) {\tiny $1$};\node[right] at (1,0) {\tiny $0$};
	\node[below] at (0,-1) {\tiny $g$};\node[above] at (0,1) {\tiny $g+1$};
	\node[above right] at (0,0) {\tiny $w^\s_{u;s}$};
}
\qquad
&
\tikz{0.9}{
	\draw[unfused] (-1,0) -- (1,0);
	\draw[fused] (0,-1) -- (0,1);
	\node[left] at (-1,0) {\tiny $1$};\node[right] at (1,0) {\tiny $1$};
	\node[below] at (0,-1) {\tiny $g$};\node[above] at (0,1) {\tiny $g$};
	\node[above right] at (0,0) {\tiny $w^\s_{u;s}$};
}
\\
\\
\dfrac{1-suq^g}{1-su}
\qquad
&
\dfrac{(q^g-1)su}{1-su}
\qquad
&
\dfrac{1-s^2q^g}{1-su}
\qquad
&
\dfrac{s^2q^g-su}{1-su}
\vspace{1ex}
\end{array}
\end{align}
Here $q$ is a fixed global ``quantization" parameter of the model, the parameters $u,s$ are local ``spectral" and ``spin" parameters, respectively, that vary depending on the vertex, and $g\in\mathbb Z_{\geq 0}$ denotes any nonnegaive integer. We assume that $q$, $u$, $s$ are generic complex numbers and often additionally assume that $|q|<1$. All unlisted weights are set to $0$, enforcing the \emph{conservation law}:
\be
w^\s_{u;s}(i,j;k,l)=0, \quad\text{unless}\ i+j=k+l.
\ee

The weights $w^\s_{u;s}$ are closely related to matrix coefficients of the higher spin $R$-matrix of  $U_q(\widehat{\frak{sl}_2})$, see \cite[Proposition 2.4]{Bor14} for details. As a consequence of this, they satisfy a version of the \emph{Yang-Baxter equation} (also sometimes referred to as the \emph{LLR equation}), see, \emph{e.g}, \cite[Proposition 2.2]{BW17}: for any $a_1, b_1\in\Z_{\geq 0}$ and $a_2,a_3,b_2,b_3\in\{0,1\}$ we have
\begin{multline}
\label{higherSpinYangBaxterFormula}
\sum_{l_1,l_2,l_3}w^\s_{x;s}(a_1,a_2;l_1,l_2)w^\s_{y;s}(l_1,a_3;b_1,l_3)\R_{x/y}(l_2,l_3;b_2,b_3)\\
=\sum_{l_1,l_2,l_3}\R_{x/y}(a_2,a_3;l_2,l_3)w^\s_{y;s}(a_1,l_3;l_1,b_3)w^\s_{x;s}(l_1,l_2;b_1,b_2).
\end{multline}
Here $\R_z$ denotes the \emph{$\R$-matrix} defined by
\be
\R_z(0,0;0,0)=\R_z(1,1;1,1)=1-qz,
\ee
\begin{align*}
&\R_z(0,1;0,1)={1-z}, &\R_z(0,1;1,0)={z(1-q)},&\\
&\R_z(1,0;0,1)={1-q}, &\R_z(1,0;1,0)={q(1-z)},&
\end{align*}
and for all values not listed above $\R_z(i,j;k,l)=0$.

Throughout this work, instead of writing equations like \eqref{higherSpinYangBaxterFormula}, we extensively use graphical notation for partition functions of vertex models. In general, we consider a collection of vertices and oriented edges (possibly of different kinds) connected to them, with some edges connecting a pair of vertices, and some edges being connected to only one vertex with the other end being free. The edges of the former type are called \emph{internal edges}, while the edges of the latter type are referred to as \emph{boundary edges}.  Additionally, to each vertex we assign a family of vertex weights, with each weight corresponding to a particular configuration of labels of the edges connected with the vertex, for example \eqref{hsVertexDef}, and to each boundary edge we assign a label which is called the \emph{boundary condition} corresponding to the edge. Given such data, the corresponding \emph{partition function} is defined as the sum of the products of the weights of all vertices, taken over all possible assignments of labels to the internal edges.

For example, the Yang-Baxter equation \eqref{higherSpinYangBaxterFormula} for the weights $w_{u;s}^\s$ can be graphically interpreted as follows:
\begin{equation}
\label{higherSpinYangBaxter}
\tikzbase{1.2}{-3}{
	\draw[unfused] 
	(-1,1) node[left] {\tiny \color{black} $a_3$} -- (1,1)  -- (2,0) node[below] {\tiny \color{black} $b_3$};
	\draw[unfused] 
	(-1,0) node[left] {\tiny \color{black} $a_2$} -- (1,0)  -- (2,1) node[above] {\tiny \color{black} $b_2$};
	\draw[fused] 
	(0,-1) node[below] {\tiny \color{black} $a_1$} -- (0,0.5)  -- (0,2) node[above] {\tiny \color{black} $b_1$};
	\node[above right] at (0,0) {\tiny{ $w^\s_{x;s}$}};
	\node[above right] at (0,1) {\tiny{ $w^\s_{y;s}$}};
	\node[right] at (1.5,0.5) {\tiny{$\ \R_{x/y}$}};
}\qquad
=
\qquad
\tikzbase{1.2}{-3}{
	\draw[unfused]
	(-2,0.5) node[above] {\tiny \color{black} $a_3$} -- (-1,-0.5)  -- (1,-0.5) node[right] {\tiny \color{black} $b_3$};
	\draw[unfused] 
	(-2,-0.5) node[below] {\tiny \color{black} $a_2$} -- (-1,0.5)  -- (1,0.5) node[right] {\tiny \color{black} $b_2$};
	\draw[fused] 
	(0,-1.5) node[below] {\tiny \color{black} $a_1$} -- (0,0)  -- (0,1.5) node[above] {\tiny \color{black} $b_1$};
	\node[above right] at (0,-0.5) {\tiny{ $w^\s_{y;s}$}};
	\node[above right] at (0,0.5) {\tiny{ $w^\s_{x;s}$}};
	\node[right] at (-1.5,0) {\tiny{$\ \R_{x/y}$}};
}
\end{equation}
The vertices with horizontal and vertical edges have the weights $w^\s_{u;s}$ given by \eqref{hsVertexDef}, while the vertices with diagonal edges, which we call \emph{tilted} vertices, correspond to the $\R$-matrix:
\be
\tikz{1}{
	\draw[unfused] (-0.5,0.5) -- (0.5,-0.5);
	\draw[unfused] (-0.5,-0.5) -- (0.5,0.5);
	\node[below] at (-0.5,-0.5) {\tiny $i$};\node[above] at (0.5,0.5) {\tiny $k$};
	\node[below] at (0.5,-0.5) {\tiny $l$};\node[above] at (-0.5,0.5) {\tiny $j$};
	\node[right] at (0,0) {\tiny $\R_z$};
}=\R_{z}(i,j;k,l).
\ee
Note that the diagrams we use have two kinds of edges, which visually differ by their thickness. Throughout this work we follow the convention that thin edges carry labels from $\{0,1\}$, while thick edges are labeled by any non-negative integer.

\begin{rem}
\label{renorm}
The letter $\s$ in the superscript of the notation $w^\s_{u;s}$ stands for ``stochastic", and should not be confused with the spin parameter $s$ of a vertex. We use such a notation to distinguish our weights from the weights $w_{u;s}$ often used in this context, \emph{cf.} \cite{Bor14}, \cite{BP16b} and \cite{BW17}. Our weights $w^\s_{u;s}$ coincide with the weights ${\sf L}_{u,s}$ used in \cite{BP16b}, and they are related to the weights $w_{u;s}$ by \cite[(2.2) and (2.3)]{BP16b}:
\be
w^\s_{u;s}(i,j;k,l)=(-s)^l\frac{(s^2;q)_k}{(q;q)_k}\frac{(q;q)_i}{(s^2;q)_i}w_{u;s}(i,j;k,l).
\ee

The word ``stochastic" refers to the fact that the sum of these weights over different labels of the outgoing edges (and fixed values of the incoming ones) is identically equal to $1$, \emph{cf.} Remark \ref{stochRem} below.
\end{rem}

\subsection{Fusion}\label{fusionSection} The weights $w^{\s}_{u;s}$ can be used to reach more general vertex models by a procedure called \emph{fusion} that we review below. See \cite[Section 4]{BW17} and \cite[Section 5]{BP16b} for a more detailed exposition of the fusion procedure, as well as further references.   

Fix $J>0$ and consider a column consisting of $J$ vertices having weights $w^\s_{u;s}, w^\s_{qu;s}, \dots, w^\s_{q^{J-1}u;s}$ starting from the bottom. This configuration is called a \emph{$J$-column}, and using the Yang-Baxter equation one can show that, under specific boundary left and right conditions, this column of vertices behaves like a single vertex of a new type, with left and right edges having labels from $\{0, \dots, J\}$. 

More precisely, for $j\geq 0$ set
\be
Z_j(J):=\sum_{a_1+\dots+a_J=j}q^{\sum_{r=1}^J(r-1)a_r}=\frac{q^{\frac{j(j-1)}{2}}(q;q)_{J}}{(q;q)_{j}(q;q)_{J-j}},
\ee
where the sum is taken over all sequences $(a_1,\dots, a_J)\in\{0,1\}^J$ satisfying $\sum_{r=1}^J a_r=j$, and the equality is a particular case of the $q$-binomial identity. For $i,j,k,l\in\Z_{\geq 0}$ such that $j,l\leq J$ define vertex weights $\W^{(J)}_{u;s}$ by
\begin{equation}
\label{fusion}
\W^{(J)}_{u;s}(i,j;k,l):=\frac{Z_l(J)}{Z_j(J)}q^{-\sum_{r=1}^J(r-1)b_r}\cdot\sum_{a_1+\dots+a_J=j}q^{\sum_{r=1}^J(r-1)a_r}\(
\tikz{0.85}{
	\foreach\y in {2,...,3}{
		\draw[unfused] (1,\y) -- (3.8,\y);
	}
	\draw[unfused] (1,4.5) -- (3.8,4.5);
	\draw[fused*] (2,1) -- (2, 3.5) node[above, scale = 0.6] {\color{black} $\vdots$};
	\draw[fused] (2,4) -- (2,5.5);
	\node[above] at (2,5.5) {$k$};
	\node[left] at (1,2) {$a_1$};
	\node[left] at (1,3) {$a_2$};
	\node[left] at (1,4.5) {$a_{J}$};
	\node[below] at (2,1) {$i$};
	\node[right] at (3.8,2) {$b_1$};
	\node[right] at (3.8,3) {$b_2$};
	\node[right] at (3.8,4.5) {$b_{J}$};
	\node[above right, scale = 0.8] at (2,2) {$w^\s_{u;s}$};
	\node[above right, scale = 0.8] at (2,3) {$w^\s_{qu;s}$};	
	\node[above right, scale = 0.8] at (2,4.5) {$w^\s_{q^{J-1}u;s}$};	
}\).
\end{equation}
Here $(b_1,\dots,b_J)\in\{0,1\}^J$ is any vector satisfying $\sum_{r=1}^Jb_r=l$. It turns out that the right-hand side of \eqref{fusion} does not depend on the particular choice of $(b_1,\dots, b_J)$, so the values $\W^{(J)}_{u;s}(i,j;k,l)$ are well defined. This property is called \emph{$q$-exchangeability}, and it allows to stack \eqref{fusion} horizontally, identifying a row of $J$-columns with a single row of ``fused" vertices:
\begin{multline}
\label{multiFusion}
\sum_{\substack{j_0,j_1,\dots,j_L\in\{0,\dots,J\}\\j_0=j,j_L=l}}\W^{(J)}_{u_1;s_1}(i_1,j_0;k_1,j_1)\W^{(J)}_{u_2;s_2}(i_2,j_1;k_2,j_2)\dots\W^{(J)}_{u_L;s_L}(i_L,j_{L-1};k_L,j_L)\\
=\frac{Z_l(J)}{Z_j(J)}q^{-\sum_{r=1}^J(r-1)b_r} \sum_{a_1+\dots+ a_r=j}q^{\sum_{r=1}^J(r-1)a_r}\(
\tikz{0.85}{
	\foreach\y in {2,...,3}{
		\draw[unfused*] (1,\y) -- (3,\y) node[right, scale =0.6] {\color{black} $\dots$};
		\draw[unfused] (3.5, \y) -- (6,\y);
	}
	\draw[unfused*] (1,4.5) -- (3,4.5) node[right, scale =0.6] {\color{black} $\dots$};;
	\draw[unfused] (3.5, 4.5) -- (6,4.5);
	\draw[fused*] (2,1) -- (2, 3.5) node[above, scale = 0.6] {\color{black} $\vdots$};
	\draw[fused] (2,4) -- (2,5.5);
	\draw[fused*] (4,1) -- (4, 3.5) node[above, scale = 0.6] {\color{black} $\vdots$};
	\draw[fused] (4,4) -- (4,5.5);
	\node[above] at (2,5.5) {$k_1$};
	\node[above] at (4,5.5) {$k_L$};
	\node[left] at (1,2) {$a_1$};
	\node[left] at (1,3) {$a_2$};
	\node[left] at (1,4.5) {$a_{J}$};
	\node[below] at (2,1) {$i_1$};
	\node[below] at (4,1) {$i_L$};
	\node[right] at (6,2) {$b_1$};
	\node[right] at (6,3) {$b_2$};
	\node[right] at (6,4.5) {$b_{J}$};
	\node[above right, scale = 0.8] at (2,2) {$w^\s_{u_1;s_1}$};
	\node[above right, scale = 0.8] at (2,3) {$w^\s_{qu_1;s_1}$};	
	\node[above right, scale = 0.8] at (2,4.5) {$w^\s_{q^{J-1}u_1;s_1}$};	
	\node[above right, scale = 0.8] at (4,2) {$w^\s_{u_L;s_L}$};
	\node[above right, scale = 0.8] at (4,3) {$w^\s_{qu_L;s_L}$};	
	\node[above right, scale = 0.8] at (4,4.5) {$w^\s_{q^{J-1}u_L;s_L}$};	
}\).
\end{multline}
Note that the sum on the left-hand side of \eqref{multiFusion} actually has at most one nonzero term: due to the conservation law we must have $j_r=j+\sum_{m=1}^r i_m-\sum_{m=1}^r k_m$.

The resulting weights $\W^{(J)}_{u;s}$ can be given by an explicit expression obtained in \cite{Man14}, here we use a slightly different form of this expression given in \cite[Theorem C.1.1]{BW18} \footnote{Our weights $\W^{(J)}_{u;s}(i,j;k,l)$ coincide with the analytic continuation $\restr{W_{J,M}(u/s;q;i \bm{e}_1, j \bm{e}_1, k \bm{e}_1, l\bm{e}_1)}{q^{M}=s^{-2}}$ of the weights $W_{L,M}(x/y;q;\bm{A},\bm{B},\bm{C},\bm{D})$ from \cite[Appendix C]{BW18}.} and originated from \cite{BM16}:
\begin{equation}
\label{fullyFusedDef}
\W^{(J)}_{u;s}(i,j;k,l)=\1_{i+j=k+l}\ u^{l-j}q^{iJ}s^{j+l}\sum_{p=0}^{min(j,k)}\phi(k-p,k+l-p;suq^J,su)\phi(p,j;sq^{-J}/u,q^{-J}),
\end{equation}
where
\be
\phi(a,b;x,y):=(y/x)^a\frac{(x;q)_a(y/x;q)_{b-a}}{(y;q)_b}\frac{(q;q)_b}{(q;q)_a(q;q)_{b-a}}.
\ee

Actually, one can construct the weights $\W^{(J)}_{u;s}$ by fusing vertices with the weights $\R_z$, expressing $\W^{(J)}_{u;s}$ via a $J\times M$ rectangular grid of the vertices with weights $\R_{z_{i,j}}$ for a certain choice of parameters $z_{i,j}$ and specific boundary conditions. We are not using this construction for the current work; more details on this approach can be found in \cite[Appendix]{BGW19}.

\subsection{Simplification of $\W$-weights}\label{simplify} The weights $\W^{(J)}_{u;s}$ are rather complicated, but in this work we only use a particular ``$q$-Hahn" specialization of them: When $u=s$ the weights $\W^{(J)}_{u;s}$ simplify, with all but $p=j$ term vanishing in \eqref{fullyFusedDef}, providing the following expression
\begin{multline}
\label{Wdef}
\tikz{1}{
	\draw[fused] (-1,0) -- (1,0);
	\draw[fused] (0,-1) -- (0,1);
	\node[left] at (-1,0) {\tiny $j$};\node[right] at (1,0) {\tiny $l$};
	\node[below] at (0,-1) {\tiny $i$};\node[above] at (0,1) {\tiny $k$};
	\node[above right] at (0,0.1) {\tiny $W^\s_{t,s}$};
}=W^\s_{t,s}(i,j;k,l):=\restr{\W^{(J)}_{s;s}(i,j;k,l)}{q^J=t^{-2}}\\
=\1_{i+j=k+l}\ \1_{i\geq l}\ s^{2l}t^{-2l}\frac{(s^2/t^2;q)_{i-l}(t^2;q)_l}{(s^2;q)_i}\frac{(q;q)_i}{(q;q)_{i-l}(q;q)_l}.
\end{multline}
Here we have used the fact that for fixed $i,j,k,l$ the weight $\W^{(J)}_{u;s}(i,j;k,l)$ depends rationally on $q^J$ for sufficiently large $J$. Thus, we can perform an \emph{analytic continuation} replacing $q^J$ by $t^2$, where $t$ is a new parameter.

Similarly to Remark \ref{renorm}, the weights $W^\s_{t,s}$ defined above differ from the weights $W_{x}$ from \cite[(35)]{BW17}, and these two families of weights are related by
\begin{equation}
\label{Wmatching}
W_{s,t}^\s(i,j;k,l)=(-s)^l\frac{(s^2;q)_k}{(q;q)_k} \frac{(q;q)_i}{(s^2;q)_i} W_{-s/t^2}(i,j;k,l),
\end{equation}
where the global spin parameter from \cite{BW17} is assumed to be equal to $s$.

The vertices with the weights $W^\s_{t,s}$ satisfy a version of the Yang-Baxter equation: for every $a_1,a_2,a_3,b_1,b_2,b_3\in\Z_{\geq 0}$ we have
\begin{equation}
\label{WYB}
\tikzbase{1.2}{-3}{
	\draw[fused] 
	(-1,1) node[left,scale=0.6] {\color{black} $a_1$} -- (1,1) -- (2,0) node[below,scale=0.6] {\color{black} $b_1$};
	\draw[fused] 
	(-1,0) node[left,scale=0.6] {\color{black} $a_2$} -- (1,0) -- (2,1) node[above,scale=0.6] {\color{black} $b_2$};
	\draw[fused] 
	(0,-1) node[below,scale=0.6] {\color{black} $a_3$} -- (0,0.5) -- (0,2) node[above,scale=0.6] {\color{black} $b_3$};
	\node[above right] at (0,0) {\tiny{ $W^\s_{t_2,t_3}$}};
	\node[above right] at (0,1) {\tiny{ $W^\s_{t_1,t_3}$}};
	\node[right] at (1.5,0.5) {\ \tiny{$W^\s_{t_1,t_2}$}};
}\qquad
=
\qquad
\tikzbase{1.2}{-3}{
	\draw[fused]
	(-2,0.5) node[above,scale=0.6] {\color{black} $a_1$} -- (-1,-0.5) -- (1,-0.5) node[right,scale=0.6] {\color{black} $b_1$};
	\draw[fused] 
	(-2,-0.5) node[below,scale=0.6] {\color{black} $a_2$} -- (-1,0.5)  -- (1,0.5) node[right,scale=0.6] {\color{black} $b_2$};
	\draw[fused] 
	(0,-1.5) node[below,scale=0.6] {\color{black} $a_3$} -- (0,0) -- (0,1.5) node[above,scale=0.6] {\color{black} $b_3$};
	\node[above right] at (0,-0.4) {\tiny{ $W^\s_{t_1,t_3}$}};
	\node[above right] at (0,0.6) {\tiny{ $W^\s_{t_2,t_3}$}};
	\node[right] at (-1.5,0) {\ \tiny{$W^\s_{t_1,t_2}$}};
}
\end{equation}
where the tilted vertex is just a rotation by 45$^\circ$ of the vertex with the weights $W^\s_{t_1,t_2}$. This Yang-Baxter equation is given in \cite[Corollary 4.3]{BW17}
: it can be proved either by a direct computation, or using the Yang-Baxter equation \eqref{higherSpinYangBaxter} together with fusion.

\begin{rem}
\label{stochRem}
Apart from the Yang-Baxter equation, the weights $w^\s_{u;s}$, $W^\s_{t,s}$ and $\W^{(J)}_{u;s}$ satisfy another property, namely, they are \emph{stochastic}: we have
\be
\sum_{k,l} w^\s_{u;s}(i,j;k,l)=1,\qquad \sum_{k,l} W^\s_{t,s}(i,j;k,l)=1,\qquad \sum_{k,l} \W^{(J)}_{u;s}(i,j;k,l)=1,
\ee
with fixed $i,j$ in the appropriate label sets.

We use the weights $w^\s_{u;s}$ and $W^\s_{t,s}$ instead of $w_{u;s}$ and $W_{x}$ because the deformations of the Yang-Baxter equation in Section \ref{deformationSection} below look more natural for the stochastic weights. 
\end{rem}

\subsection{Dual weights.} Apart from vertices with edges oriented along the up-right direction we also use vertices with edges oriented along the up-left direction. We call them \emph{dual vertices}, and their weights are closely related to the weights of the regular vertices. Such vertices play an important role for symmetric functions arising from vertex models, especially for the proofs of Cauchy type summation identities for such functions. Dual vertex weights were used in  \cite{BW17}, and they are equivalent to the conjugated weights from \cite{BP16b}. Since our weights $w^\s$ and $W^\s$ differ from the weights used in those works, our dual vertex weights also differ, though we do not state the exact matching between these weights; it can be readily recovered from the definitions below, Remark \ref{renorm} and \eqref{Wmatching}.

Define dual vertex weights $w^{\s*}_{u;s}$ by the following relation:
\vspace{1ex}\begin{multline}
\label{dualWeights}
\tikz{1}{
	\draw[unfused] (1,0) -- (-1,0);
	\draw[fused] (0,-1) -- (0,1);
	\node[left] at (-1,0) {\tiny $j$};\node[right] at (1,0) {\tiny $l$};
	\node[below] at (0,-1) {\tiny $i$};\node[above] at (0,1) {\tiny $k$};
	\node[above right] at (0,0) {\tiny $w^{\s*}_{u;s}$};
}=w^{\s*}_{u;s}(i,l;k,j):=s^{-2l}\frac{(q;q)_i}{(s^2;q)_i}\frac{(s^2;q)_k}{(q;q)_k}w^\s_{u;s}(k,j;i,l)\\
=s^{-2l}\frac{(q;q)_i}{(s^2;q)_i}\frac{(s^2;q)_k}{(q;q)_k}
\tikz{1}{
	\draw[unfused] (-1,0) -- (1,0);
	\draw[fused] (0,-1) -- (0,1);
	\node[left] at (-1,0) {\tiny $j$};\node[right] at (1,0) {\tiny $l$};
	\node[below] at (0,-1) {\tiny $k$};\node[above] at (0,1) {\tiny $i$};
	\node[above right] at (0,0) {\tiny $w^{\s}_{u;s}$};
}.
\end{multline}
The explicit values of weights $w^{\s*}_{u;s}$  can be computed from \eqref{hsValues} and they are summarized below:
\begin{align}
\label{dualWeightsTable}
\begin{array}{cccc}
\tikz{0.8}{
	\draw[unfused] (1,0) -- (-1,0);
	\draw[fused] (0,-1) -- (0,1);
	\node[left] at (-1,0) {\tiny $0$};\node[right] at (1,0) {\tiny $0$};
	\node[below] at (0,-1) {\tiny $g$};\node[above] at (0,1) {\tiny $g$};
	\node[above right] at (0,0) {\tiny $w^{\s*}_{u;s}$};
}
\qquad
&
\tikz{0.8}{
	\draw[unfused] (1,0) -- (-1,0);
	\draw[fused] (0,-1) -- (0,1);
	\node[left] at (-1,0) {\tiny $0$};\node[right] at (1,0) {\tiny $1$};
	\node[below] at (0,-1) {\tiny $g$};\node[above] at (0,1) {\tiny $g+1$};
	\node[above right] at (0,0) {\tiny $w^{\s*}_{u;s}$};
}
\qquad
&
\tikz{0.8}{
	\draw[unfused] (1,0) -- (-1,0);
	\draw[fused] (0,-1) -- (0,1);
	\node[left] at (-1,0) {\tiny $1$};\node[right] at (1,0) {\tiny $0$};
	\node[below] at (0,-1) {\tiny $g$};\node[above] at (0,1) {\tiny $g-1$};
	\node[above right] at (0,0) {\tiny $w^{\s*}_{u;s}$};
}
\qquad
&
\tikz{0.8}{
	\draw[unfused] (1,0) -- (-1,0);
	\draw[fused] (0,-1) -- (0,1);
	\node[left] at (-1,0) {\tiny $1$};\node[right] at (1,0) {\tiny $1$};
	\node[below] at (0,-1) {\tiny $g$};\node[above] at (0,1) {\tiny $g$};
	\node[above right] at (0,0) {\tiny $w^{\s*}_{u;s}$};
}
\\
\\
\dfrac{1-suq^g}{1-su}
\qquad
&
\dfrac{-s^{-1}u(1-s^2q^g)}{1-su}
\qquad
&
\dfrac{1-q^g}{1-su}
\qquad
&
\dfrac{q^g-s^{-1}u}{1-su}
\vspace{1ex}
\end{array}
\end{align}
As before, all unlisted configurations have weight $0$, with the conservation law taking the form
\be
w^{\s*}_{u;s}(i,l;k,j)=0, \quad\text{unless}\ i+l=k+j.
\ee

Looking at the explicit values, one can establish another relation between the weights $w^{\s}_{u,s}$ and $w^{\s*}_{u,s}$:
\begin{equation}
\label{directionChange}
\tikz{1}{
	\draw[unfused] (1,0) -- (-1,0);
	\draw[fused] (0,-1) -- (0,1);
	\node[left] at (-1,0) {\tiny $j$};\node[right] at (1,0) {\tiny $l$};
	\node[below] at (0,-1) {\tiny $i$};\node[above] at (0,1) {\tiny $k$};
	\node[above right] at (0,0) {\tiny $w^{\s*}_{u;s}$};
}=
\frac{s-u}{s(1-us)}\
\tikz{1}{
	\draw[unfused] (-1,0) -- (1,0);
	\draw[fused] (0,-1) -- (0,1);
	\node[left] at (-1,0) {\tiny $(1-j)$};\node[right] at (1,0) {\tiny $(1-l)$};
	\node[below] at (0,-1) {\tiny $i$};\node[above] at (0,1) {\tiny $k$};
	\node[above right] at (0,0) {\tiny $w^{\s}_{u^{-1};s}$};
}.
\end{equation}
This is essentially the relation between the initial and conjugated weights in \cite{BP16b} that was used to prove the Cauchy identity there.

Using the definition \eqref{dualWeights}, the dual weights $w^{\s*}_{u;s}$ can be interpreted as rescaled $w^{\s}_{u,s}$ weights reflected across the horizontal axis and with the direction of all edges reversed. Thus, one can rewrite the Yang-Baxter equation \eqref{higherSpinYangBaxter} in terms of the dual weights by reversing the orientation of all the edges, reflecting the whole vertex diagrams on both sides of \eqref{higherSpinYangBaxter} across the horizontal axis and then renormalizing the weights. It turns out that the factors arising form this rescaling actually cancel out\footnote{Cancellation of the terms $(-s)^{-2l}$ uses the conservation law of the tilted vertex.}, and we arrive at the following Yang-Baxter equation:
\begin{equation}
\label{higherSpinDualYangBaxter}
\tikzbase{1.2}{-3}{
	\draw[unfused] 
	(2,0) node[below] {\tiny \color{black} $b_3$} -- (1,1)  -- (-1,1) node[left] {\tiny \color{black} $a_3$};
	\draw[unfused] 
	(2,1) node[above] {\tiny \color{black} $b_2$} -- (1,0)  -- (-1,0) node[left] {\tiny \color{black} $a_2$};
	\draw[fused] 
	(0,-1) node[below] {\tiny \color{black} $a_1$} -- (0,0.5)  -- (0,2) node[above] {\tiny \color{black} $b_1$};
	\node[above right] at (0,0) {\tiny{ $w^{\s*}_{x;s}$}};
	\node[above right] at (0,1) {\tiny{ $w^{\s*}_{y;s}$}};
	\node[right] at (1.5,0.5) {\tiny{$\R^*_{x/y}$}};
}\qquad
=
\qquad
\tikzbase{1.2}{-3}{
	\draw[unfused]
	(1,-0.5) node[right] {\tiny \color{black} $b_3$} -- (-1,-0.5)  -- (-2,0.5) node[above] {\tiny \color{black} $a_3$};
	\draw[unfused] 
	(1,0.5) node[right] {\tiny \color{black} $b_2$} -- (-1,0.5)  -- (-2,-0.5) node[below] {\tiny \color{black} $a_2$};
	\draw[fused] 
	(0,-1.5) node[below] {\tiny \color{black} $a_1$} -- (0,0)  -- (0,1.5) node[above] {\tiny \color{black} $b_1$};
	\node[above right] at (0,-0.5) {\tiny{ $w^{\s*}_{y;s}$}};
	\node[above right] at (0,0.5) {\tiny{ $w^{\s*}_{x;s}$}};
	\node[right] at (-1.5,0) {\tiny{$\R^*_{x/y}$}};
}
\end{equation}
where
\be
\tikz{1}{
	\draw[unfused] (0.5,0.5) -- (-0.5,-0.5);
	\draw[unfused] (0.5,-0.5) -- (-0.5,0.5);
	\node[below] at (-0.5,-0.5) {\tiny $j$};\node[above] at (0.5,0.5) {\tiny $l$};
	\node[below] at (0.5,-0.5) {\tiny $k$};\node[above] at (-0.5,0.5) {\tiny $i$};
	\node[right] at (0,0) {\tiny $\R^*_z$};
}=\R_{z^{-1}}(i,j;k,l).
\ee
Alternatively, one can recover the same Yang-Baxter equation using \eqref{directionChange}.

There is also another Yang-Baxter equation that involves both $W^\s_{t,s}$ and $w^{\s*}_{u;s}$.
\begin{prop} \label{CauchyYBProp}Assume that $x,y,s,t$ are complex numbers satisfying $xs=yt$. Then for any $a_1, b_1\in\{0,1\}$ and $a_2,b_2,a_3,b_3\in\Z_{\geq 0}$ we have
\begin{equation}
\label{CauchyYB}
\tikzbase{1.2}{-3}{
	\draw[unfused] 
	(2,0) node[below,scale=0.6] {\color{black} $b_1$} -- (1,1)  -- (-1,1) node[left,scale=0.6] {\color{black} $a_1$};
	\draw[fused] 
	(-1,0) node[left,scale=0.6] {\color{black} $a_2$} -- (1,0) -- (2,1) node[above,scale=0.6] {\color{black} $b_2$};
	\draw[fused] 
	(0,-1) node[below,scale=0.6] {\color{black} $a_3$} -- (0,0.5) -- (0,2) node[above,scale=0.6] {\color{black} $b_3$};
	\node[above right] at (0,0) {\tiny{ $W^\s_{t,s}$}};
	\node[above right] at (0,1) {\tiny{ $w^{\s*}_{x;s}$}};
	\node[right] at (1.5,0.5) {\tiny\ {$w^{\s*}_{y;t}$}};
}\qquad
=
\qquad
\tikzbase{1.2}{-3}{
	\draw[unfused]
	(1,-0.5) node[right,scale=0.6] {\color{black} $b_1$} -- (-1,-0.5) -- (-2,0.5) node[above,scale=0.6] {\color{black} $a_1$};
	\draw[fused] 
	(-2,-0.5) node[below,scale=0.6] {\color{black} $a_2$} -- (-1,0.5) -- (1,0.5) node[right,scale=0.6] {\color{black} $b_2$};
	\draw[fused] 
	(0,-1.5) node[below,scale=0.6] {\color{black} $a_3$} -- (0,0) -- (0,1.5) node[above,scale=0.6] {\color{black} $b_3$};
	\node[above right] at (0,-0.5) {\tiny{ $w^{\s*}_{x; s}$}};
	\node[above right] at (0,0.5) {\tiny{ $W^{\s}_{t, s}$}};
	\node[right] at (-1.5,0) {\tiny\ {$w^{\s*}_{y;t}$}};
}
\end{equation}
where the tilted vertices are rotated by $45^\circ$ vertices with the weights $w^{\s*}_{y;t}$. 
\end{prop}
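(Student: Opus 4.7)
The plan is to derive \eqref{CauchyYB} from the standard higher-spin Yang-Baxter equation \eqref{higherSpinYangBaxter} in two steps: fusion along one horizontal line, followed by the direction-reversal relation \eqref{directionChange}.

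First, I would establish an intermediate mixed Yang-Baxter equation between $W^\s_{t,s}$, an unfused $w^\s_{u;s}$, and a tilted $w^\s_{v;t}$ R-matrix. Starting from \eqref{higherSpinYangBaxter}, I stack $J$ copies along the bottom horizontal line with $q$-shifted spectral parameters $x_1, qx_1, \dots, q^{J-1}x_1$ and apply the $q$-exchangeability property of fusion from Section~\ref{fusionSection}. Representation-theoretically this replaces the ``vector'' representation on the bottom line by its symmetric $J$-fold tensor power. Specializing $x_1 = s$ and analytically continuing via $q^J = t^{-2}$, the fused bottom vertex becomes $W^\s_{t,s}$ by \eqref{Wdef}, while the fused tilted R-matrix becomes a single $w^\s_{v;t}$-type vertex: it has one $\{0,1\}$-edge (from the unfused top line), one $\Z_{\geq 0}$-edge (from the fused bottom line), and spin $t$ from the analytic continuation, with spectral parameter $v = v(u, s, t)$ determined by the fusion.

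Next, I would apply \eqref{directionChange} simultaneously to the $w^\s_{u;s}$ vertex on the top horizontal line and to the tilted $w^\s_{v;t}$ R-matrix, converting them into $w^{\s*}_{u^{-1};s}$ and $w^{\s*}_{v^{-1};t}$ respectively at the cost of the multiplicative prefactors $(s-u)/(s(1-us))$ and $(t-v)/(t(1-vt))$. These prefactors appear on both sides of the mixed YBE and cancel thanks to the conservation laws on the tilted vertex, exactly as in the derivation of \eqref{higherSpinDualYangBaxter}. Setting $x := u^{-1}$ and $y := v^{-1}$, the parameter relation $v = v(u, s, t)$ inherited from the first step becomes the clean constraint $xs = yt$, yielding \eqref{CauchyYB}.

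The main obstacle is the determination, in the first step, of the precise spectral parameter $v$ of the fused tilted R-matrix after the specialization and analytic continuation, and the verification that it collapses to a single unfused $w^\s_{v;t}$ weight rather than a fully fused $W^\s$-type vertex (the collapse being a consequence of one side of the tilted vertex retaining the $\{0,1\}$-valued labels from the unfused top line). This requires careful bookkeeping of the parameter shifts through $J$ R-matrix fusions. As a computational fallback one may verify \eqref{CauchyYB} directly case by case: since $a_1, b_1 \in \{0,1\}$ gives only four boundary configurations, in each case the conservation laws reduce both sides to short finite sums whose equality can be checked using the explicit formulas \eqref{Wdef} and \eqref{dualWeightsTable} together with the constraint $xs = yt$.
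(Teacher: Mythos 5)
Your proposal takes essentially the same approach as the paper: the paper's own proof explicitly offers the two routes you describe, namely reducing \eqref{CauchyYB} via the direction-reversal relation \eqref{directionChange} to a mixed equation (the paper's \eqref{CauchyYBChanged}) that follows from \eqref{higherSpinYangBaxter} by fusion, and, as a fallback, direct verification over the four cases of $a_1,b_1$ with at most two summands each. No gaps.
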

\begin{proof}
There are three ways to prove Proposition \ref{CauchyYBProp}. First, it can be verified by a direct computation, which reduces to four equalities between partition functions corresponding to the values of $a_1,b_1$, with each partition function consisting of at most two summands. Alternatively, using \eqref{directionChange}, we can reduce the Yang-Baxter equation \eqref{CauchyYB} to the equation 
\begin{equation}
\label{CauchyYBChanged}
\tikzbase{1.2}{-3}{
	\draw[unfused] 
	(-1,1) node[left,scale=0.6] {\color{black} $1-a_1$} -- (1,1)  -- (2,0) node[below,scale=0.6] {\color{black} $1-b_1$};
	\draw[fused] 
	(-1,0) node[left,scale=0.6] {\color{black} $a_2$} -- (1,0) -- (2,1) node[above,scale=0.6] {\color{black} $b_2$};
	\draw[fused] 
	(0,-1) node[below,scale=0.6] {\color{black} $a_3$} -- (0,0.5) -- (0,2) node[above,scale=0.6] {\color{black} $b_3$};
	\node[above right] at (0,0) {\tiny{ $W^\s_{t,s}$}};
	\node[above right] at (0,1) {\tiny{ $w^{\s}_{x^{-1};s}$}};
	\node[right] at (1.5,0.5) {\tiny\ {$w^{\s}_{y^{-1};t}$}};
}\qquad
=
\qquad
\tikzbase{1.2}{-3}{
	\draw[unfused]
	(-2,0.5) node[above,scale=0.6] {\color{black} $1-a_1$} -- (-1,-0.5) -- (1,-0.5) node[right,scale=0.6] {\color{black} $1-b_1$};
	\draw[fused] 
	(-2,-0.5) node[below,scale=0.6] {\color{black} $a_2$} -- (-1,0.5) -- (1,0.5) node[right,scale=0.6] {\color{black} $b_2$};
	\draw[fused] 
	(0,-1.5) node[below,scale=0.6] {\color{black} $a_3$} -- (0,0) -- (0,1.5) node[above,scale=0.6] {\color{black} $b_3$};
	\node[above right] at (0,-0.5) {\tiny{ $w^{\s}_{x^{-1}; s}$}};
	\node[above right] at (0,0.5) {\tiny{ $W^{\s}_{t, s}$}};
	\node[right] at (-1.5,0) {\tiny\ {$w^{\s}_{y^{-1};t}$}};
}
\end{equation}
which follows from \eqref{higherSpinYangBaxter} by fusion. Finally, one can see \eqref{CauchyYB} or, more precisely, \eqref{CauchyYBChanged} as a particular case of the Yang-Baxter equation for the weights $\W^{(J)}_{u;s}$:
\begin{multline*}
\sum_{l_1,l_2,l_3}\W^{(J)}_{y;s}(a_3,a_2;l_3,l_2)\W^{(I)}_{x;s}(l_3,a_1;b_3,l_1)\W^{(I)}_{xy^{-1}q^{-J/2};q^{-J/2}}(l_2,l_1;b_2,b_1)\\
=\sum_{l_1,l_2,l_3}\W^{(I)}_{xy^{-1}q^{-J/2};q^{-J/2}}(a_2,a_1;l_2,l_1)\W^{(I)}_{x;s}(a_3,l_1;l_3,b_1)\W^{(J)}_{y;s}(l_3,l_2;b_3,b_2),
\end{multline*}
which in turn can be seen as an analytic continuation of the $n=1$ case of the master Yang-Baxter equation from \cite[C.1.2]{BW18}.
\end{proof}

In this work we also use a dual analogue of the weights $W_{t,s}^\s$. Define
\vspace{1ex}\begin{multline}
\label{dualWWeights}
\tikz{1}{
	\draw[fused] (1,0) -- (-1,0);
	\draw[fused] (0,-1) -- (0,1);
	\node[left] at (-1,0) {\tiny $j$};\node[right] at (1,0) {\tiny $l$};
	\node[below] at (0,-1) {\tiny $i$};\node[above] at (0,1) {\tiny $k$};
	\node[above right] at (0,0) {\tiny $W^{\s*}_{t,s}$};
}=W^{\s*}_{t,s}(i,l;k,j):=s^{-2l}\frac{(q;q)_i}{(s^2;q)_i}\frac{(s^2;q)_k}{(q;q)_k}\cdot t^{2k}\frac{(t^2;q)_j}{(q;q)_j}\frac{(q;q)_l}{(t^2;q)_l} W^\s_{t,s}(k,j;i,l)\\
=\1_{i+l=j+k}\ \1_{i\geq j}\  t^{2i-2j}\frac{(s^2/t^2;q)_{i-j}(t^2;q)_j}{(s^2;q)_i}\frac{(q;q)_i}{(q;q)_{i-j}(q;q)_j},
\end{multline}
where for the last computation we used \eqref{Wdef} and the relation $i-j=k-l$ implied by $\1_{i+l=j+k}$.

\section{Deformations of Yang-Baxter equations} \label{deformationSection}
The Yang-Baxter equations listed in the previous section have the same structure: they change positions of vertices while leaving the vertices themselves and their weights unchanged. In this section we introduce deformed Yang-Baxter equations, which change the weights of vertices in a nontrivial way.

We start with a straightforward deformation of \eqref{WYB}. 

\begin{prop}
\label{defWYBprop}
We have the following equality of rational functions in $t_1,t_2,t_3,\eta$:
\begin{multline*}
\sum_{l_1, l_2, l_3} W^{\s}_{\eta t_2, \eta t_3}(a_3, a_2; l_3, l_2)W^{\s}_{t_1, t_3}(l_3, a_1; b_3, l_1)W^{\s}_{\eta t_1, \eta t_2}(l_2, l_1; b_2, b_1)\\
=\sum_{l_1, l_2, l_3} W^{\s}_{t_1, t_2}(a_2, a_1; l_2, l_1)W^{\s}_{\eta t_1, \eta  t_3}(a_3, l_1; l_3, b_1)W^{\s}_{t_2, t_3}(l_3, l_2; b_3, b_2),
\end{multline*}
or, equivalently,
\begin{equation}
\label{defWYB}
\tikzbase{1.2}{-3}{
	\draw[fused] 
	(-1,1) node[left,scale=0.6] {\color{black} $a_1$} -- (1.2,1) -- (2.2,0) node[below,scale=0.6] {\color{black} $b_1$};
	\draw[fused] 
	(-1,0) node[left,scale=0.6] {\color{black} $a_2$} -- (1.2,0) -- (2.2,1) node[above,scale=0.6] {\color{black} $b_2$};
	\draw[fused] 
	(0,-1) node[below,scale=0.6] {\color{black} $a_3$} -- (0,0.5) -- (0,2) node[above,scale=0.6] {\color{black} $b_3$};
	\node[above right] at (0,0) {\tiny{ $W^\s_{\eta t_2,\eta t_3}$}};
	\node[above right] at (0,1) {\tiny{ $W^\s_{t_1,t_3}$}};
	\node[right] at (1.7,0.5) {\ \tiny{$W^\s_{\eta t_1,\eta t_2}$}};
}\quad
=
\quad
\tikzbase{1.2}{-3}{
	\draw[fused]
	(-2,0.5) node[above,scale=0.6] {\color{black} $a_1$} -- (-1,-0.5) -- (1.5,-0.5) node[right,scale=0.6] {\color{black} $b_1$};
	\draw[fused] 
	(-2,-0.5) node[below,scale=0.6] {\color{black} $a_2$} -- (-1,0.5)  -- (1.5,0.5) node[right,scale=0.6] {\color{black} $b_2$};
	\draw[fused] 
	(0,-1.5) node[below,scale=0.6] {\color{black} $a_3$} -- (0,0) -- (0,1.5) node[above,scale=0.6] {\color{black} $b_3$};
	\node[above right] at (0,-0.5) {\tiny{ $W^\s_{\eta t_1,\eta t_3}$}};
	\node[above right] at (0,0.5) {\tiny{ $W^\s_{t_2,t_3}$}};
	\node[right] at (-1.5,0) {\ \tiny{$W^\s_{t_1,t_2}$}};
}
\end{equation}
where the tilted vertex is a vertex with weights $W^\s$, rotated by $45^\circ$.
\end{prop}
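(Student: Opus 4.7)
The plan is to reduce the deformed identity \eqref{defWYB} to the original Yang-Baxter equation \eqref{WYB} by isolating a common scalar factor that absorbs the $\eta$-dependence on each side.

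The first step is to read off from \eqref{Wdef} how a simultaneous rescaling $(t,s)\mapsto(\eta t,\eta s)$ affects the weight. Inspecting the factors in \eqref{Wdef}, the combinations $(s/t)^{2l}$, $(s^2/t^2;q)_{i-l}$ and all $(q;q)$-Pochhammers are invariant under this rescaling, so
\begin{equation*}
W^\s_{\eta t,\eta s}(i,j;k,l)\;=\;\frac{(\eta^2 t^2;q)_l}{(t^2;q)_l}\cdot\frac{(s^2;q)_i}{(\eta^2 s^2;q)_i}\cdot W^\s_{t,s}(i,j;k,l).
\end{equation*}
Crucially, the modifying factor depends only on the bottom label $i$ and the right label $l$, not on $j$ or $k$.

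Next I apply this formula to each of the three vertices on the left-hand side of \eqref{defWYB}. The middle vertex $W^\s_{t_1,t_3}$ is untouched; the bottom vertex $W^\s_{\eta t_2,\eta t_3}(a_3,a_2;l_3,l_2)$ contributes the factor $\tfrac{(\eta^2 t_2^2;q)_{l_2}(t_3^2;q)_{a_3}}{(t_2^2;q)_{l_2}(\eta^2 t_3^2;q)_{a_3}}$, and the tilted vertex $W^\s_{\eta t_1,\eta t_2}(l_2,l_1;b_2,b_1)$ contributes $\tfrac{(\eta^2 t_1^2;q)_{b_1}(t_2^2;q)_{l_2}}{(t_1^2;q)_{b_1}(\eta^2 t_2^2;q)_{l_2}}$. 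The $l_2$-dependent pieces cancel between these two factors, leaving the net scalar
\begin{equation*}
C\;=\;\frac{(t_3^2;q)_{a_3}\,(\eta^2 t_1^2;q)_{b_1}}{(\eta^2 t_3^2;q)_{a_3}\,(t_1^2;q)_{b_1}},
\end{equation*}
which is independent of the summation variables $l_1,l_2,l_3$. On the right-hand side only the middle vertex $W^\s_{\eta t_1,\eta t_3}(a_3,l_1;l_3,b_1)$ acquires an $\eta$-factor, and its contribution is exactly $C$ again.

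Pulling the common factor $C$ out of both sides reduces \eqref{defWYB} to the original Yang-Baxter equation \eqref{WYB}, which is already available. The sums are finite by the vertex conservation law and all quantities are rational in $t_1,t_2,t_3,\eta$, so no further analytic justification is needed. The one point I would verify with care is the telescoping cancellation of the $l_2$-dependent factors on the left-hand side; this cancellation is precisely what dictates the specific pattern of $\eta$-insertions chosen in the statement, and once it is observed the rest of the argument is immediate.
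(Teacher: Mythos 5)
Your proposal is correct and follows exactly the paper's own argument: the paper likewise observes the rescaling identity $W^\s_{\eta t,\eta s}(i,j;k,l)=\frac{(\eta^2t^2;q)_l}{(t^2;q)_l}\frac{(s^2;q)_i}{(\eta^2 s^2;q)_i} W^\s_{t,s}(i,j;k,l)$, cancels the internal-edge factors, and reduces \eqref{defWYB} to \eqref{WYB} multiplied on both sides by $\frac{(\eta^2t_1^2;q)_{b_1}}{(t_1^2;q)_{b_1}}\frac{(t_3^2;q)_{a_3}}{(\eta^2 t_3^2;q)_{a_3}}$, which is precisely your constant $C$.
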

\begin{proof}
Note that 
\be
\tikz{1}{
	\draw[fused] (-1,0) -- (1,0);
	\draw[fused] (0,-1) -- (0,1);
	\node[left] at (-1,0) {\tiny $j$};\node[right] at (1,0) {\tiny $l$};
	\node[below] at (0,-1) {\tiny $i$};\node[above] at (0,1) {\tiny $k$};
	\node[above right] at (0,0.1) {\tiny $W^\s_{\eta t,\eta s}$};
}=\frac{(\eta^2t^2;q)_l}{(t^2;q)_l}\frac{(s^2;q)_i}{(\eta^2 s^2;q)_i}
\tikz{1}{
	\draw[fused] (-1,0) -- (1,0);
	\draw[fused] (0,-1) -- (0,1);
	\node[left] at (-1,0) {\tiny $j$};\node[right] at (1,0) {\tiny $l$};
	\node[below] at (0,-1) {\tiny $i$};\node[above] at (0,1) {\tiny $k$};
	\node[above right] at (0,0.1) {\tiny $W^\s_{t,s}$};
}.
\ee
Applying this equality to \eqref{defWYB} and cancelling common factors corresponding to internal edges, we reduce \eqref{defWYB} to \eqref{WYB} with both sides multiplied by
\be
\frac{(\eta^2t_1^2;q)_{b_1}}{(t_1^2;q)_{b_1}}\frac{(t_3^2;q)_{a_3}}{(\eta^2 t_3^2;q)_{a_3}}.
\ee
\end{proof}

There is also another possible deformation, which is crucial for the main results of this work. This time we deform the Yang-Baxter equation \eqref{CauchyYB}:

\begin{prop}
\label{defCauchyYBprop}
Let $\eta, x,y, s,t$ be parameters satisfying the relation $xs=yt$. The following equality of rational functions holds:
\begin{multline*}
\sum_{l_1,l_2,l_3} W^\s_{t,s}(a_3,a_2;l_3,l_2)w^{\s*}_{\eta x;\eta s}(l_3, l_1; b_3, a_1)w^{\s*}_{y;t}(l_2,b_1;b_2,l_1)\\
=\sum_{l_1,l_2,l_3} w^{\s*}_{\eta y;\eta t}(a_2,l_1;l_2,a_1)w^{\s*}_{x;s}(a_3, b_1; l_3, l_1)W^\s_{t,s}(l_3,l_2;b_3,b_2),
\end{multline*}
or, equivalently,
\begin{equation}
\label{defCauchyYB}
\tikzbase{1.2}{-3}{
	\draw[unfused] 
	(2,0) node[below,scale=0.6] {\color{black} $b_1$} -- (1,1)  -- (-1,1) node[left,scale=0.6] {\color{black} $a_1$};
	\draw[fused] 
	(-1,0) node[left,scale=0.6] {\color{black} $a_2$} -- (1,0) -- (2,1) node[above,scale=0.6] {\color{black} $b_2$};
	\draw[fused] 
	(0,-1) node[below,scale=0.6] {\color{black} $a_3$} -- (0,0.5) -- (0,2) node[above,scale=0.6] {\color{black} $b_3$};
	\node[above right] at (0,0) {\tiny{ $W^\s_{t,s}$}};
	\node[above right] at (0,1) {\tiny{ $w^{\s*}_{\eta x;\eta s}$}};
	\node[right] at (1.5,0.5) {\tiny\ {$w^{\s*}_{y; t}$}};
}\quad
=
\quad
\tikzbase{1.2}{-3}{
	\draw[unfused]
	(1,-0.5) node[right,scale=0.6] {\color{black} $b_1$} -- (-1,-0.5) -- (-2,0.5) node[above,scale=0.6] {\color{black} $a_1$};
	\draw[fused] 
	(-2,-0.5) node[below,scale=0.6] {\color{black} $a_2$} -- (-1,0.5) -- (1,0.5) node[right,scale=0.6] {\color{black} $b_2$};
	\draw[fused] 
	(0,-1.5) node[below,scale=0.6] {\color{black} $a_3$} -- (0,0) -- (0,1.5) node[above,scale=0.6] {\color{black} $b_3$};
	\node[above right] at (0,-0.5) {\tiny{ $w^{\s*}_{x; s}$}};
	\node[above right] at (0,0.5) {\tiny{ $W^\s_{t, s}$}};
	\node[right] at (-1.5,0) {\tiny\ {$w^{\s*}_{\eta y;\eta t}$}};
}
\end{equation}
where the tilted vertex is a vertex with weights $w^{\s*}$, rotated by $45^\circ$.
\end{prop}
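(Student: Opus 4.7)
The strategy is to mirror the three proofs of Proposition~\ref{CauchyYBProp}. I would pursue the second approach as the main route. Using \eqref{directionChange}, I first rewrite every $w^{\s*}$-weight in \eqref{defCauchyYB} as a rescaled $w^{\s}$-weight with complementary horizontal labels. The scalar prefactors then combine cleanly: on the LHS one picks up $\frac{s-x}{s(1-\eta^{2}xs)}\cdot\frac{t-y}{t(1-yt)}$, and on the RHS one picks up $\frac{t-y}{t(1-\eta^{2}yt)}\cdot\frac{s-x}{s(1-xs)}$. The factor $(s-x)(t-y)/(st)$ is the same on both sides, and the hypothesis $xs=yt$ forces $(1-\eta^{2}xs)(1-yt)=(1-\eta^{2}yt)(1-xs)$, so the $\eta$-dependent denominators also match. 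Thus \eqref{defCauchyYB} is equivalent to a ``deformed'' mixed Yang-Baxter identity of the same shape as \eqref{CauchyYBChanged}, in which $w^{\s}_{x^{-1};s}$ appearing at the top of the LHS gets replaced by $w^{\s}_{\eta^{-1}x^{-1};\eta s}$, and $w^{\s}_{y^{-1};t}$ appearing on the left diagonal of the RHS gets replaced by $w^{\s}_{\eta^{-1}y^{-1};\eta t}$, while the other $w^{\s}$-weights and the two $W^{\s}_{t,s}$-vertices are unchanged.

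The key structural observation is that $w^{\s}_{u;s}$ depends on the pair $(u,s)$ only through the combinations $us$ and $s^{2}$; since $(\eta^{-1}u)(\eta s)=us$, the substitution $(u,s)\mapsto(\eta^{-1}u,\eta s)$ preserves $us$ and only rescales $s^{2}\mapsto\eta^{2}s^{2}$. Inspecting \eqref{hsValues}, the factor $s^{2}$ appears exactly in the two matrix elements of $w^{\s}$ with horizontal-left label equal to $1$, so the deformation only touches those entries. The proof therefore reduces to the assertion that the extra $\eta$-dependence introduced by $s^{2}\mapsto\eta^{2}s^{2}$ in the LHS weight is exactly compensated by $t^{2}\mapsto\eta^{2}t^{2}$ in the RHS weight, using $xs=yt$.

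This compensation check is carried out by direct computation. Since the thin labels $a_{1},b_{1}$ (and hence $1-a_{1},1-b_{1}$) lie in $\{0,1\}$, there are four cases, and in each case the conservation laws for the three vertices force the summations over $l_{1},l_{2},l_{3}$ to have at most two nonzero terms, so the identity reduces to a finite algebraic check in the $q$-Pochhammer expressions. The main obstacle is the bookkeeping in the subcase where both the deformed LHS weight and the deformed RHS weight simultaneously sit in the $j=1$ configurations that activate their $\eta^{2}s^{2}$, respectively $\eta^{2}t^{2}$, factors; here the constraint $xs=yt$ does the essential and least obvious work. An alternative conceptual route, paralleling the third proof of Proposition~\ref{CauchyYBProp}, would be to derive \eqref{defCauchyYB} as an analytic continuation (in a fusion parameter $q^{J}$) of a specialization of the master Yang-Baxter equation from \cite[Appendix~C]{BW18} carrying two independent spin parameters, with $\eta$ encoding the ratio of those two spin parameters; I would record this as a second proof only if the direct verification turns out unilluminating.
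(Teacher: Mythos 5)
Your argument is correct, but it follows a genuinely different route from the paper. The paper proves this proposition by analytic continuation in a vertical label: it replaces the label $a_2$ by a complex parameter $\alpha$ via the continued weights $\widetilde{w}^{\s*}$ and $\widetilde{W}^{\s}$, notes that the undeformed equation \eqref{CauchyYB} holds for $\alpha=q^N$ with $N$ large and that both sides of \eqref{contCauchyYB} are rational in $\alpha$, and then specializes $\alpha=\eta^{2(1-a_1)}q^{a_2}$, which by \eqref{contSpec} produces exactly the weights $w^{\s*}_{\eta x;\eta s}$ and $w^{\s*}_{\eta y;\eta t}$ up to the prefactors $\frac{1-\eta^2xs}{1-xs}=\frac{1-\eta^2yt}{1-yt}$ --- this is the only place $xs=yt$ enters. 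You instead reduce via \eqref{directionChange} to an $\eta$-deformed version of \eqref{CauchyYBChanged}; your bookkeeping of the scalar prefactors, and the observation that they match precisely because $xs=yt$, are both correct, as is your identification of the deformation as the substitution $s^2\mapsto\eta^2s^2$ (resp.\ $t^2\mapsto\eta^2t^2$) in the two entries of \eqref{hsValues} with left horizontal label $1$. Your structural remark that $w^{\s}_{u;s}$ depends only on $us$ and $s^2$ is essentially the same rationality phenomenon the paper exploits, just implemented on the horizontal rather than the vertical labels. What the paper's continuation buys is that no new computation is needed at all (the deformed identity is literally an evaluation of the undeformed one at a non-integer point) and that the argument transfers verbatim to the colored setting at the end of Section \ref{deformationSection}; what your route buys is that it is elementary and stays entirely within the explicit weight tables. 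The one caveat is that you assert rather than exhibit the four-case check. It is a well-posed finite verification --- only $l_1\in\{0,1\}$ is free once the conservation laws fix $l_2,l_3$, so each side has at most two summands --- and the identity is true, so it will go through; but the subcase you flag, where both deformed entries are active simultaneously, is exactly where the algebra is least mechanical and would need to be written out for a complete proof.
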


The proof is based on the following observation: note that the weights $w^\s_{u;s}(g,j;g+j-l,l)$ and $w^{\s*}_{u;s}(g,l;g+l-j,j)$ depend on $g$ as rational functions in $\alpha=q^g$. This suggests that we can analytically continue the weights to get a model with certain edges having complex labels rather than integer ones, and then specialize the analytically continued complex labels in a specific way to get a deformation of the initial model.

In more detail, consider the vertex weights $w^{\s*}_{u;s}$. Using the explicit values \eqref{dualWeightsTable} of these weights, we can define their analytic continuation denoted by
\vspace{1ex}\be
\tikz{1}{
	\draw[unfused] (1,0) -- (-1,0);
	\draw[cont] (0,-1) -- (0,1);
	\node[left] at (-1,0) {\tiny $j$};\node[right] at (1,0) {\tiny $l$};
	\node[below] at (0,-1) {\tiny $\alpha$};\node[above] at (0,1) {\tiny $\beta$};
	\node[above right] at (0,0) {\tiny $\widetilde{w}^{\s*}_{u;s}$};
}=\widetilde{w}^{\s*}_{u;s}(\alpha,l;\beta,j)
\ee
with $\alpha,\beta$ being complex parameters assigned to edges with analytically continued labels. The values of $\widetilde{w}^{\s*}_{u;s}$ are given by
\begin{align}
\label{contWeights}
\begin{array}{cccc}
\tikz{0.85}{
	\draw[unfused] (1,0) -- (-1,0);
	\draw[cont] (0,-1) -- (0,1);
	\node[left] at (-1,0) {\tiny $0$};\node[right] at (1,0) {\tiny $0$};
	\node[below] at (0,-1) {\tiny $\alpha$};\node[above] at (0,1) {\tiny $\alpha$};
	\node[above right] at (0,0) {\tiny $\widetilde{w}^{\s*}_{u;s}$};
}
\qquad
&
\tikz{0.85}{
	\draw[unfused] (1,0) -- (-1,0);
	\draw[cont] (0,-1) -- (0,1);
	\node[left] at (-1,0) {\tiny $0$};\node[right] at (1,0) {\tiny $1$};
	\node[below] at (0,-1) {\tiny $\alpha$};\node[above] at (0,1) {\tiny $q\alpha$};
	\node[above right] at (0,0) {\tiny $\widetilde{w}^{\s*}_{u;s}$};
}
\qquad
&
\tikz{0.85}{
	\draw[unfused] (1,0) -- (-1,0);
	\draw[cont] (0,-1) -- (0,1);
	\node[left] at (-1,0) {\tiny $1$};\node[right] at (1,0) {\tiny $0$};
	\node[below] at (0,-1) {\tiny $\alpha$};\node[above] at (0,1) {\tiny $q^{-1}\alpha$};
	\node[above right] at (0,0) {\tiny $\widetilde{w}^{\s*}_{u;s}$};
}
\qquad
&
\tikz{0.85}{
	\draw[unfused] (1,0) -- (-1,0);
	\draw[cont] (0,-1) -- (0,1);
	\node[left] at (-1,0) {\tiny $1$};\node[right] at (1,0) {\tiny $1$};
	\node[below] at (0,-1) {\tiny $\alpha$};\node[above] at (0,1) {\tiny $\alpha$};
	\node[above right] at (0,0) {\tiny $\widetilde{w}^{\s*}_{u;s}$};
}
\\
\\
\dfrac{1-su\alpha}{1-su}
\qquad
&
\dfrac{-s^{-1}u(1-s^2\alpha)}{1-su}
\qquad
&
\dfrac{1-\alpha}{1-su}
\qquad
&
\dfrac{\alpha-s^{-1}u}{1-su}
\vspace{1ex}
\end{array}
\end{align}
and all other values are $0$. One can immediately observe that for any $i,k\in\Z_{\geq 0}$ the weight $\widetilde{w}^{\s*}_{u;s}(q^i,l;q^k,j)$ is equal to $w^{\s*}_{u;s}(i,l;k,j)$. Moreover, comparing \eqref{contWeights} with \eqref{dualWeightsTable} we can find a more general specialization, producing $w^{\s*}$-weights with deformed parameters: For any complex $\eta$ we have 
\vspace{1ex}\begin{equation}
\label{contSpec}
\tikz{1}{
	\draw[unfused] (1,0) -- (-1,0);
	\draw[cont] (0,-1) -- (0,1);
	\node[left] at (-1,0) {\tiny $j$};\node[right] at (1,0) {\tiny $l$};
	\node[below] at (0,-1) {\tiny $\eta^{2(1-j)}q^i$};\node[above] at (0,1) {\tiny $\eta^{2(1-j)}q^k$};
	\node[above right] at (0,0) {\tiny $\widetilde{w}^{\s*}_{u;s}$};
}=\frac{1-\eta^2su}{1-su}
\tikz{1}{
	\draw[unfused] (1,0) -- (-1,0);
	\draw[fused] (0,-1) -- (0,1);
	\node[left] at (-1,0) {\tiny $j$};\node[right] at (1,0) {\tiny $l$};
	\node[below] at (0,-1) {\tiny $i$};\node[above] at (0,1) {\tiny $k$};
	\node[above right] at (0,0) {\tiny $w^{\s*}_{\eta u; \eta s}$};
}.
\end{equation}

Similarly, we can perform such a continuation for the weights $W^\s_{t,s}$. Here we have four edges which might participate in this continuation, and for the application below we choose a pair of them, while leaving the other pair untouched. Define weights $\widetilde{W}^\s_{t,s}(i,\alpha;\beta,l)$ by setting
\be
\tikz{1}{
	\draw[fused] (0,-1) -- (0,0) -- (1,0);
	\draw[cont] (-1,0) -- (0,0) -- (0,1);
	\node[left] at (-1,0) {\tiny $\alpha$};\node[right] at (1,0) {\tiny $l$};
	\node[below] at (0,-1) {\tiny $i$};\node[above] at (0,1) {\tiny $q^\Delta \alpha$};
	\node[above right] at (0,0.1) {\tiny $\widetilde{W}^\s_{t,s}$};
}=\widetilde{W}^\s_{t,s}(i,\alpha;q^\Delta\alpha,l):=\1_{\Delta=i-l}\1_{\Delta\geq 0}\ s^{2l}t^{-2l}\frac{(s^2/t^2;q)_{i-l}(t^2;q)_l}{(s^2;q)_i}\frac{(q;q)_i}{(q;q)_{i-l}(q;q)_l}
\ee
for any $\Delta\in\Z$, while $\widetilde{W}^\s_{t,s}(i,\alpha;\beta,l)=0$ for all other choices of $\beta$. Note that these weight do not actually depend on $\alpha$, so we can change $\alpha$ without affecting the weight.

Now we can apply the idea explained above to prove Proposition \ref{defCauchyYBprop}.

\begin{proof}[Proof of Proposition \ref{defCauchyYBprop}]
Fix $a_1,a_3,b_1,b_2\in \mathbb Z_{\geq 0}$ and let $\Delta\in \mathbb Z$. Rewrite the Yang-Baxter equation \eqref{CauchyYB} as
\begin{equation}
\label{contCauchyYB}
\tikzbase{1.2}{-3}{
	\draw[unfused] 
	(2,0) node[below] {\tiny \color{black} $b_1$} -- (1,1)  -- (-1,1) node[left] {\tiny\color{black} $a_1$};
	\draw[fused] 
	(0,-1) node[below] {\tiny \color{black} $a_3$}  -- (0,0) -- (1,0) -- (2,1) node[above] {\tiny\color{black} $b_2$};
	\draw[cont] 
	(-1,0) node[left] {\tiny \color{black} $\alpha$} -- (0,0)  -- (0,2) node[above] {\tiny\color{black} $q^\Delta\alpha$};
	\node[above right] at (0,0) {\tiny{ $\widetilde{W}^\s_{t,s}$}};
	\node[above right] at (0,1) {\tiny{ $\tilde w^{\s*}_{x; s}$}};
	\node[right] at (1.5,0.5) {\tiny\ {$w^{\s*}_{y; t}$}};
}\qquad
=
\qquad
\tikzbase{1.2}{-3}{
	\draw[unfused]
	(1,-0.5) node[right] {\tiny \color{black} $b_1$} -- (-1,-0.5) -- (-2,0.5) node[above] {\tiny \color{black} $a_1$};
	\draw[fused] 
	(0,-1.5) node[below] {\tiny \color{black} $a_3$}  -- (0, 0.5) -- (1,0.5) node[right] {\tiny \color{black} $b_2$};
	\draw[cont] 
	(-2,-0.5) node[below] {\tiny \color{black} $\alpha$} -- (-1,0.5) -- (0, 0.5) -- (0,1.5) node[above] {\tiny \color{black} $q^\Delta\alpha$};
	\node[above right] at (0,-0.5) {\tiny{ $w^{\s*}_{x; s}$}};
	\node[above right] at (0,0.6) {\tiny{ $\widetilde{W}^\s_{t, s}$}};
	\node[right] at (-1.5,0) {\tiny\ {$\tilde w^{\s*}_{y;t}$}};
}
\end{equation}
where $\alpha=q^N$ for sufficiently large $N\in \mathbb Z_{\geq 0}$, and on both sides we are summing over all configurations of internal edges such that the vertex weights are nonzero.\footnote{There is still a finite number of such configurations, even with the analytically continued edges.}  Note that both sides of \eqref{contCauchyYB} are rational functions in $\alpha$, equal at infinitely many points, hence \eqref{contCauchyYB} holds for any $\alpha$.

The proof is finished by \eqref{contSpec} -- multiplying both sides of \eqref{contCauchyYB} by $\frac{1-xs}{1-\eta^2xs}=\frac{1-yt}{1-\eta^2yt}$ and substituting $\alpha=\eta^{2(1-a_1)}q^{a_2}$ readily implies the claim.
\end{proof}

\begin{rem}
Proposition \ref{defWYBprop} can also be obtained via analytic continuation, in a manner similar to Proposition \ref{defCauchyYBprop}. In order to do that one needs to analytically continue the weights $W^\s_{t,s}$ with respect to another pair of labels
\be
\tikz{0.95}{
	\draw[fused] (-1,0) -- (0,0) -- (0,1);
	\draw[cont] (0,-1) -- (0,0) -- (1,0);
	\node[left] at (-1,0) {\tiny $j$};\node[right] at (1,0) {\tiny $\alpha$};
	\node[below] at (0,-1) {\tiny $q^{\Delta}\alpha$};\node[above] at (0,1) {\tiny $k$};
	\node[above right] at (0,0.1) {\tiny $\widehat{W}^\s_{t,s}$};
}:=\1_{\Delta=k-j}\1_{\Delta\geq 0}\ \alpha^{2\log_q(s/t)}\frac{(q\alpha;q)_\Delta(s^2/t^2;q)_{\Delta}}{(q;q)_{\Delta}}\frac{(t^2;q)_\infty}{(\alpha t^2;q)_\infty}\frac{(q^{\Delta}\alpha s^2;q)_\infty}{(s^2;q)_\infty},
\ee
and then use the following continuation of the Yang-Baxter equation \eqref{WYB}:
\be
\tikzbase{1.2}{-3}{
	\draw[fused] 
	(-1,1) node[left,scale=0.6] {\color{black} $a_1$} -- (0,1) -- (0,2) node[above,scale=0.6] {\color{black} $b_3$}; 
	\draw[fused] 
	(-1,0) node[left,scale=0.6] {\color{black} $a_2$} -- (0,0) -- (0,1) --(1,1) --(1.5,0.5) -- (2,1) node[above,scale=0.6] {\color{black} $b_2$};
	\draw[cont] 
	(0,-1) node[below,scale=0.6] {\color{black} $q^{\Delta}\alpha$} -- (0, 0) -- (1,0) -- (1.5,0.5) -- (2,0) node[below,scale=0.6] {\color{black} $\alpha$};
	\node[above right] at (0,0) {\tiny{ $\widehat{W}^\s_{t_2,t_3}$}};
	\node[above right] at (0,1) {\tiny{ $W^\s_{t_1,t_3}$}};
	\node[right] at (1.5,0.5) {\ \tiny{$\widehat{W}^\s_{t_1,t_2}$}};
}\qquad
=
\qquad
\tikzbase{1.2}{-3}{
	\draw[fused]
	(-2,0.5) node[above,scale=0.6] {\color{black} $a_1$} -- (-1.5,0) --  (-1,0.5) -- (0,0.5) -- (0,1.5) node[above,scale=0.6] {\color{black} $b_3$};
	\draw[fused] 
	(-2,-0.5) node[below,scale=0.6] {\color{black} $a_2$} -- (-1.5,0) -- (-1,-0.5) -- (0,-0.5) -- (0,0.5) -- (1,0.5) node[right,scale=0.6] {\color{black} $b_2$};
	\draw[cont] 
	(0,-1.5) node[below,scale=0.6] {\color{black} $q^\Delta\alpha$} -- (0,-0.5)  -- (1,-0.5) node[right,scale=0.6] {\color{black} $\alpha$}; 
	\node[above right] at (0,-0.4) {\tiny{ $\widehat{W}^\s_{t_1,t_3}$}};
	\node[above right] at (0,0.6) {\tiny{ $W^\s_{t_2,t_3}$}};
	\node[right] at (-1.5,0) {\ \tiny{$W^\s_{t_1,t_2}$}};
}
\ee
\end{rem}

\subsection*{Colored deformed Yang-Baxter equations.} As mentioned earlier, the deformed Yang-Baxter equation from Proposition \ref{defCauchyYBprop} plays an essential role for the results of this work. However, initially we have discovered it in a different context, which is briefly described in the remainder of this section. The argument below is not relevant to the rest of this work, so we will omit details, limiting ourselves to a rough outline of the main ideas.

For a fixed $n\in\Z_{\geq 1}$, \emph{colored} vertex models are defined in the same way as the vertex models from Section \ref{prelim}, but with integer labels replaced by $n$-tuples of nonnegative integers called \emph{compositions}. The entries in the compositions are interpreted as numbers of paths of a specific color passing through the edge, hence the term ``colored". The first stochastic version of such a model was introduced in \cite{KMMO16}, and, in the current context, it was studied in \cite{BW18} and \cite{BW20}.

It turns out that one can repeat all the constructions from Section \ref{prelim} getting the colored analogues of the weights $w^\s_{u;s}, W^\s_{t,s}$ and $\W^{(J)}_{u;s}$. For example, the colored version of the weights $w^{\s}_{u;s}$ is explicitly described by the following table:
\begin{align}
\label{Lweights}
\begin{tabular}{|c|c|c|}
\hline
\quad
\tikz{1}{
	\draw[unfused] (-1,0) -- (1,0);
	\draw[fused] (0,-1) -- (0,1);
	\node[left] at (-1,0) {\tiny $\mathbf{0}$};\node[right] at (1,0) {\tiny $\mathbf{0}$};
	\node[below] at (0,-1) {\tiny ${\bm I}$};\node[above] at (0,1) {\tiny ${\bm I}$};
	\node[above right] at (0,0) {\tiny $w^{\mathbf{col}}_{u;s}$};
}
\quad
&
\quad
\tikz{1}{
	\draw[unfused] (-1,0) -- (1,0);
	\draw[fused] (0,-1) -- (0,1);
	\node[left] at (-1,0) {\tiny ${\mathbf e}^i$};\node[right] at (1,0) {\tiny ${\mathbf e}^i$};
	\node[below] at (0,-1) {\tiny ${\bm I}$};\node[above] at (0,1) {\tiny ${\bm I}$};
	\node[above right] at (0,0) {\tiny $w^{\mathbf{col}}_{u;s}$};
}
\quad
&
\quad
\tikz{1}{
	\draw[unfused] (-1,0) -- (1,0);
	\draw[fused] (0,-1) -- (0,1);
	\node[left] at (-1,0) {\tiny $\mathbf{0}$};\node[right] at (1,0) {\tiny ${\mathbf e}^i$};
	\node[below] at (0,-1) {\tiny ${\bm I}$};\node[above] at (0,1) {\tiny ${\bm I}-{\mathbf e}^i$};
	\node[above right] at (0,0) {\tiny $w^{\mathbf{col}}_{u;s}$};
}
\quad
\\[1.8cm]
\quad
$\dfrac{1-s x q^{I_{[1;n]}}}{1-sx}$
\quad
& 
\quad
$\dfrac{(s^2q^{I_i}-sx) q^{I_{[i+1;n]}}}{1-sx}$
\quad
& 
\quad
$\dfrac{sx(q^{I_i}-1) q^{I_{[i+1;n]}}}{1-sx}$
\quad
\\[0.7cm]
\hline
\quad
\tikz{1}{
	\draw[unfused] (-1,0) -- (1,0);
	\draw[fused] (0,-1) -- (0,1);
	\node[left] at (-1,0) {\tiny ${\mathbf e}^i$};\node[right] at (1,0) {\tiny $\mathbf{0}$};
	\node[below] at (0,-1) {\tiny ${\bm I}$};\node[above] at (0,1) {\tiny ${\bm I}+{\mathbf e}^i$};
	\node[above right] at (0,0) {\tiny $w^{\mathbf{col}}_{u;s}$};
}
\quad
&
\quad
\tikz{1}{
	\draw[unfused] (-1,0) -- (1,0);
	\draw[fused] (0,-1) -- (0,1);
	\node[left] at (-1,0) {\tiny ${\mathbf e}^i$};\node[right] at (1,0) {\tiny ${\mathbf e}^j$};
	\node[below] at (0,-1) {\tiny ${\bm I}$};\node[above] at (0,1) {\tiny ${\bm I}+{\mathbf e}^i-{\mathbf e}^j$};
	\node[above right] at (0,0) {\tiny $w^{\mathbf{col}}_{u;s}$};
}
\quad
&
\quad
\tikz{1}{
	\draw[unfused] (-1,0) -- (1,0);
	\draw[fused] (0,-1) -- (0,1);
	\node[left] at (-1,0) {\tiny ${\mathbf e}^j$};\node[right] at (1,0) {\tiny ${\mathbf e}^i$};
	\node[below] at (0,-1) {\tiny ${\bm I}$};\node[above] at (0,1) {\tiny ${\bm I}+{\mathbf e}^j-{\mathbf e}^i$};
	\node[above right] at (0,0) {\tiny $w^{\mathbf{col}}_{u;s}$};
}
\quad
\\[1.8cm] 
\quad
$\dfrac{1-s^2 q^{I_{[1;n]}}}{1-sx}$
\quad
& 
\quad
$\dfrac{sx(q^{I_j}-1) q^{I_{[j+1;n]}}}{1-sx}$
\quad
&
\quad
$\dfrac{s^2(q^{I_i}-1)q^{I_{[i+1;n]}}}{1-sx}$
\quad
\\[0.7cm]
\hline
\end{tabular} 
\end{align}
Here we assume that $i<j$ are integers between $1$ and $n$, ${\bm I}=(I_1,\dots, I_n)$ denotes an $n$-tuple of nonnegative integers, $\mathbf{e}^i$ denotes an $n$-tuple with $1$ at the $i$th position and $0$ elsewhere, and $\mathbf{0}=\mathbf{e}^0:=(0,\dots,0)$. We also use the notation $I_{[i,n]}:=I_i+I_{i+1}+\dots+I_n$ and $|{\bm I}|:=I_1+\dots+I_n$. Note that these weights depend rationally on $q^{I_c}$ for each $c\in\{1,\dots, n\}$, hence we can again perform analytic continuation of labels of the thick edges. The resulting weights satisfy the following property:
\be
\tikz{1}{
	\draw[unfused] (-1,0) -- (1,0);
	\draw[cont] (0,-1) -- (0,1);
	\node[left] at (-1,0) {\tiny $\mathbf{e}^j$};\node[right] at (1,0) {\tiny $\mathbf{e}^l$};
	\node[below] at (0,-1) {\tiny $\restr{\(T_{\eta^2,j}{\bm A}\)}{\alpha_c=q^{I_c}}$};\node[above] at (0,1) {\tiny $\restr{\(T_{\eta^2, j}{\bm B}\)}{\beta_c=q^{K_c}}$};
	\node[above right] at (0,0) {\tiny $\widetilde{w}^{\mathbf{col}}_{u;s}$};
}=
\tikz{1}{
	\draw[unfused] (-1,0) -- (1,0);
	\draw[fused] (0,-1) -- (0,1);
	\node[left] at (-1,0) {\tiny $\mathbf{e}^{j}$};\node[right] at (1,0) {\tiny $\mathbf{e}^l$};
	\node[below] at (0,-1) {\tiny ${\bm I}$};\node[above] at (0,1) {\tiny ${\bm K}$};
	\node[above right] at (0,0) {\tiny $w^{\mathbf{col}}_{u/\eta; \eta s}$};
},
\ee
where ${\bm A}=(\alpha_1, \dots, \alpha_n)$ and ${\bm B}=(\beta_1, \dots, \beta_n)$ are tuples of analytically continued labels,  and $T_{\eta^2, j}$ is the operator that multiplies the $j$th coordinate by $\eta^2$ and leaves the other coordinates unchanged. Using \eqref{directionChange} one can readily see that for $n=1$ the equality above is equivalent to \eqref{contSpec}.

Following the same notation, the colored analogue of the $q$-Hahn weights $W^{\s}_{t,s}$ is given by, \emph{cf.} \cite[(6.17)]{BGW19},
\be
\tikz{1}{
	\draw[fused] (-1,0) -- (1,0);
	\draw[fused] (0,-1) -- (0,1);
	\node[left] at (-1,0) {\tiny ${\bm J}$};\node[right] at (1,0) {\tiny ${\bm L}$};
	\node[below] at (0,-1) {\tiny ${\bm I}$};\node[above] at (0,1) {\tiny ${\bm K}$};
	\node[above right] at (0,0.1) {\tiny $W^{\mathbf{col}}_{t,s}$};
}=\1_{\bm I+\bm J=\bm K+\bm L}\(s^2/t^2\)^{|\bm L|}\frac{(s^2/t^2;q)_{|\bm I|-|\bm L|}(t^2;q)_{|\bm L|}}{(s^2;q)_{|\bm I|}}\prod_{c=1}^n\1_{I_c\geq L_c}\frac{q^{L_{[1,c-1]}(I_c-L_c)}(q;q)_{I_c} }{(q;q)_{L_c}(q;q)_{I_c-L_c}}.
\ee
Similarly to the $n=1$ case, the colored versions of vertex weights above satisfy various forms of the Yang-Baxter equation. It turns out that we can deform some of these equations for the colored model using the same trick of analytic continuation of labels. For instance, the colored analogue of \eqref{CauchyYBChanged} holds, and repeating the same argument as in Proposition \ref{defCauchyYBprop} one can get the following deformed Yang-Baxter equation for $x/s=y/t$:
\be
\tikzbase{1.2}{-3}{
	\draw[unfused] 
	(-1,1) node[left,scale=0.6] {\color{black} $\mathbf{e}^a$} -- (1,1)  -- (2,0) node[below,scale=0.6] {\color{black} $\mathbf{e}^b$};
	\draw[fused] 
	(-1,0) node[left,scale=0.6] {\color{black} $\bm A_1$} -- (1,0) -- (2,1) node[above,scale=0.6] {\color{black} $\bm B_1$};
	\draw[fused] 
	(0,-1) node[below,scale=0.6] {\color{black} $\bm A_2$} -- (0,0.5) -- (0,2) node[above,scale=0.6] {\color{black} $\bm B_2$};
	\node[above right] at (0,0) {\tiny{ $W^{\mathbf{col}}_{t,s}$}};
	\node[above right] at (0,1) {\tiny{ $w^{\mathbf{col}}_{x/\eta;\eta s}$}};
	\node[right] at (1.5,0.5) {\tiny\ {$w^{\mathbf{col}}_{y; t}$}};
}\quad
=
\quad
\tikzbase{1.2}{-3}{
	\draw[unfused]
	(-2,0.5) node[above,scale=0.6] {\color{black} $\mathbf{e}^a$} -- (-1,-0.5) -- (1,-0.5) node[right,scale=0.6] {\color{black} $\mathbf{e}^b$};
	\draw[fused] 
	(-2,-0.5) node[below,scale=0.6] {\color{black} $\bm A_1$} -- (-1,0.5) -- (1,0.5) node[right,scale=0.6] {\color{black} $\bm B_1$};
	\draw[fused] 
	(0,-1.5) node[below,scale=0.6] {\color{black} $\bm A_2$} -- (0,0) -- (0,1.5) node[above,scale=0.6] {\color{black} $\bm B_2$};
	\node[above right] at (0,-0.5) {\tiny{ $w^{\mathbf{col}}_{x; s}$}};
	\node[above right] at (0,0.5) {\tiny{ $W^{\mathbf{col}}_{t, s}$}};
	\node[right] at (-1.5,0) {\tiny\ {$w^{\mathbf{col}}_{y/\eta;\eta t}$}};
}
\ee
For $n=1$ this deformed Yang-Baxter equation is equivalent to Proposition \ref{defCauchyYBprop}.

The deformed equation above can be used in the following way: Rotating the vertex with the weight $W^{\mathbf{col}}_{t,s}$ by $90^\circ$, rescaling and considering the limiting regime $s,x\to 0$, $x/s=y/t=const$, one gets
\begin{equation}
\label{OYB}
\tikzbase{1.2}{-3}{
	\draw[fused]
	(1,-0.5) node[right,scale=0.6] {\color{black} $\bm A_2$} -- (-1,-0.5) -- (-2,0.5) node[above,scale=0.6] {\color{black} $\bm B_2$};
	\draw[unfused] 
	(-2,-0.5) node[below,scale=0.6] {\color{black} $\mathbf{e}^a$} -- (-1,0.5) -- (1,0.5) node[right,scale=0.6] {\color{black} $\mathbf{e}^b$};
	\draw[fused] 
	(0,-1.5) node[below,scale=0.6] {\color{black} $\bm A_1$} -- (0,0) -- (0,1.5) node[above,scale=0.6] {\color{black} $\bm B_1$};
	\node[above right] at (0,-0.5) {\tiny{ $W^{\mathcal{O}}_{y;t}$}};
	\node[above right] at (0,0.5) {\tiny{ $w^{\mathbf{col}}_{y; t}$}};
	\node[right] at (-1.5,0) {\tiny\ {$w^{\mathcal O}_{\eta^2t/y}$}};
}\quad
=
\quad
\tikzbase{1.2}{-3}{
	\draw[fused] 
	(2,0) node[below,scale=0.6] {\color{black} $\bm A_2$} -- (1,1)  -- (-1,1) node[left,scale=0.6] {\color{black} $\bm B_2$};
	\draw[unfused] 
	(-1,0) node[left,scale=0.6] {\color{black} $\mathbf{e}^a$} -- (1,0) -- (2,1) node[above,scale=0.6] {\color{black} $\mathbf{e}^b$};
	\draw[fused] 
	(0,-1) node[below,scale=0.6] {\color{black} $\bm A_1$} -- (0,0.5) -- (0,2) node[above,scale=0.6] {\color{black} $\bm B_1$};
	\node[above right] at (0,0) {\tiny{ $w^{\mathbf{col}}_{y/\eta,t\eta}$}};
	\node[above right] at (0,1) {\tiny{ $W^{\mathcal{O}}_{y;t}$}};
	\node[right] at (1.5,0.5) {\tiny\ {$w^{\mathcal O}_{t/y}$}};
}\end{equation}
where the weights $W^{\mathcal O}_{y;t}$ are given by
\be
\tikz{1}{
	\draw[fused] (1,0) -- (-1,0);
	\draw[fused] (0,-1) -- (0,1);
	\node[left] at (-1,0) {\tiny ${\bm K}$};\node[right] at (1,0) {\tiny ${\bm I}$};
	\node[below] at (0,-1) {\tiny ${\bm J}$};\node[above] at (0,1) {\tiny ${\bm L}$};
	\node[above right] at (0,0.1) {\tiny $W^{\mathcal O}_{y;t}$};
}=\1_{\bm I+\bm J=\bm K+\bm L}\frac{(t^2;q)_{|\bm L|}}{(yt)^{|\bm L|}}\prod_{c=1}^n\frac{q^{L_{c}(I_{[c+1,n]}-L_{[c+1,n]})}(q^{I_c-L_c+1};q)_{L_c} }{(q;q)_{L_c}},
\ee
while the weights $w^{\mathcal O}_z$ are certain degenerations of the weights $w^{\mathbf{col}}_{u;s}$.

Now we can use a standard strategy: given a Yang-Baxter equation like \eqref{OYB}, there is a way to produce a Cauchy-type summation identity by repeatedly applying the Yang-Baxter equation to vertices in row partition functions. Various examples of this procedure are described in \cite{Bor14}, \cite{WZJ15}, \cite{BP16b}, \cite{BW17}, \cite{BW18}, and in the case of varying parameters of the tilted crosses this is performed in Propositions \ref{exchange} and \ref{dualCauchyTheo} below. It turns out that this procedure applied to \eqref{OYB} results in a Cauchy type summation identity of the form
\be
\mathbb{E}_{\mathbf{u}\mid\Xi,\S}(\mathcal O)=f(\mathbf{u}\mid\Xi,\S),
\ee
where the expectation is taken with respect to a certain probability measure depending on parameters $\mathbf{u}, \Xi,\S$ and described in terms of the stochastic (inhomogeneous) colored higher spin six-vertex model, the observable $\mathcal O$ is closely related to $q$-moments of the colored height function of the colored vertex model, and the functions $f$ are partition functions of the colored vertex model wth specific boundary conditions. This identity, in less generality, was first obtained in \cite{BW20} in the case of the \emph{homogeneous} colored vertex model, where the same strategy was used to produce a Cauchy-like summation identity starting from a non-deformed Yang-Baxter equation, \emph{cf.} \cite[Theorem 4.7]{BW20}. Our deformation allows to extend that result to the inhomogeneous case, answering \cite[Remark 6.5]{BW20}. 

However, there now exists a completely different approach used in \cite{BK20} to achieve more general results about $q$-moments of the height functions; thus, we are not pursuing this topic here any further.

\section{Row operators}\label{rowOperatorsSection}
In this section we introduce \emph{row operators}, which form an algebraic framework for our results. The operators depend on one variable, $\kappa$ or $u$ depending on the operator, as well as two infinite sequences of complex inhomogeneity parameters $\Xi=(\xi_0, \xi_1, \xi_2,\dots)$ and $\S=(s_0, s_1,s_2,\dots)$, which we assume to be fixed throughout the section. 

In what follows, all non-tilted vertices in our diagrams will have one of the four types of weights: $w^\s_{u;s}, W^\s_{t,s}, w^{\s*}_{u;s} , W^{\s*}_{t,s}$; the pictorial notation for these vertices will always coincide with the pictures from \eqref{hsVertexDef}, \eqref{Wdef}, \eqref{dualWeights} and \eqref{dualWWeights} respectively. To simplify the figures below, we will interchangeably use the following equivalent notations
\be
\tikz{1}{
	\draw[unfused] (-1,0) -- (1,0);
	\draw[fused] (0,-1) -- (0,1);
	\node[left] at (-1,0) {\tiny $j$};\node[right] at (1,0) {\tiny $l$};
	\node[below] at (0,-1) {\tiny $i$};\node[above] at (0,1) {\tiny $k$};
	\node[above right] at (0,0) {\tiny $w^{\s}_{u;s}$};
}\quad=\quad w^\s\(
\tikz{1}{
	\draw[unfused] (-1,0) -- (1,0);
	\draw[fused] (0,-1) -- (0,1);
	\node[left] at (-1,0) {\tiny $j$};\node[right] at (1,0) {\tiny $l$};
	\node[below] at (0,-1) {\tiny $i$};\node[above] at (0,1) {\tiny $k$};
	\node[above right] at (0,0) {\tiny $(u;s)$};
}\)\quad=\quad
\tikz{1}{
	\draw[unfused] (-1,0) -- (1,0);
	\draw[fused] (0,-1) -- (0,1);
	\node[left] at (-1,0) {\tiny $j$};\node[right] at (1,0) {\tiny $l$};
	\node[below] at (0,-1) {\tiny $i$};\node[above] at (0,1) {\tiny $k$};
	\node[above right] at (0,0) {\tiny $(u;s)$};
}
\ee
for the vertices with weights $w^\s_{u,s}$, and similarly for the other three types of weights. As the thickness and the direction of edges uniquely determine the weight family, this should not cause confusion.

\subsection{Mixed shift.}\label{mixedShiftSection} Before discussing the row operators, we would like to define a certain transformation, which is regularly applied to the sequences $\Xi$ and $\S$ throughout the further text. For a pair of sequences $\mathcal A=(a_0, a_1,\dots)$, $\mathcal B=(b_0, b_1,\dots)$  and an integer $k\in\Z_{\geq 0}$, introduce a \emph{mixed shift} operator $\tau_{\mathcal B}$ by
\begin{equation}
\label{mixedShiftDef}
\tau^k_{\mathcal B}\mathcal A=(a_0^{(k)}, a_1^{(k)},\dots), \quad  a_i^{(k)}=\sqrt{a_{i+k}b_{i+k}a_i/b_i}.
\end{equation}
Note that for any $k,l\geq 0$ we have
\be
\tau^k_{\mathcal B'}\tau^l_{\mathcal B}\mathcal A=\tau^{k+l}_{\mathcal B}\mathcal A,\quad \text{where}\ \mathcal B':=\tau^l_{\mathcal A}\mathcal B.
\ee
We will regularly use this construction with sequences $\mathcal A$ and $\mathcal B$ being equal to $\Xi$ or $\S$. Set
\be
\xi_i^{(k)}:=\sqrt{\xi_{i+k}s_{i+k}\xi_i/s_i},\qquad s_i^{(k)}:=\sqrt{s_{i+k}\xi_{i+k}s_i/\xi_i},
\ee
\be
\tau^k_{\S}\Xi=(\xi_0^{(k)}, \xi_1^{(k)}, \xi_2^{(k)},\dots),\qquad \tau^k_{\Xi}\S=(s_0^{(k)}, s_1^{(k)}, s_2^{(k)},\dots).
\ee
The square roots here are treated formally: for each parameter $\xi_i$ we fix a symbol $\sqrt{\xi_i}$ subject to relation  $\sqrt{\xi_i}^2=\xi_i$, and similarly for $s_i$. One can alternatively treat the parameters $\Xi$ and $\S$ as  complex variables, and fix a branch of the square root. However, for the majority of expressions we do not need square roots at all due to the relations
\be
s_i^{(k)}/\xi_i^{(k)}=s_i/\xi_i,\qquad s_i^{(k)}\xi_i^{(k)}=s_{i+k}\xi_{i+k}.
\ee

\subsection{Row operators $T_i$ and $\T_a$.}\label{rowSect} Define an infinite dimensional vector space $\V:=\Span\{|\lambda\rangle\}_{\lambda\in\mathbb Y}$ with a basis enumerated by partitions (we only allow finite linear combinations of basis vectors). We think of this space as the space of possible configurations of a semi-infinite collection of vertical edges of a vertex model, where $|\lambda\rangle$ corresponds to a configuration such that for any $j\geq 1$ the $j$th edge has  label $m_j(\lambda')=\lambda_j-\lambda_{j+1}$. We also define a space $\V^*$ with the basis $\{\langle\lambda|\}_{\lambda\in\mathbb Y}$ dual to the basis $\{|\lambda\rangle\}_{\lambda\in\mathbb Y}$: $\langle \lambda|\mu\rangle=\1_{\lambda=\mu}$.

Let $\kappa$ be an indeterminate, and for each $a\geq 0$ introduce operators $\T_a(\kappa\mid\Xi,\S)$ in $\End(\V)$ by
 \begin{equation}
 \label{defT}
 \T_a(\kappa\,|\,\Xi,\S):|\mu\rangle\mapsto \sum_{\lambda}W^\s\(
 \tikzbase{0.9}{-2}{
	\draw[fused*] (-1,0) -- (3.9,0);
	\foreach\x in {0,...,1}{
		\draw[fused] (2*\x,-1) -- (2*\x,1);
	}
	\node[left] at (-1,0) {\tiny $a$};
	\node[right] at (4,0) {\dots};
	\draw[fused] (4.8,0) -- (6,0);
	\node[right] at (6,0) {\tiny $0$};
	\node[below] at (0,-1) {\tiny $m_1(\mu')$};\node[above] at (0,1) {\tiny $m_1(\lambda')$};
	\node[below] at (2,-1) {\tiny $m_2(\mu')$};\node[above] at (2,1) {\tiny $m_2(\lambda')$};
	\node[above right] at (0,0) {\tiny $\(\sqrt{\frac{s_1\xi_1}{\kappa}}, s_1 \)$};
	\node[above right] at (2,0) {\tiny $\(\sqrt{\frac{s_2\xi_2}{\kappa}}, s_2 \)$};
    }
\)|\lambda\rangle.
\end{equation}

Here the coefficients are given in terms of partition functions of a semi-infinite row, with the vertex in the $i$th column having the weight $W^\s_{t,s}$ with parameters $(t,s)=\(\sqrt{\frac{s_i\xi_i}{\kappa}}, s_i\)$. Since such vertex weights are extensively used throughout the text, for convenience we rewrite \eqref{Wdef} as
 \begin{equation}
 \label{Wredef}
 W^\s\(\tikz{0.7}{
	\draw[fused] (-1,0) -- (2,0);
	\draw[fused] (0,-1) -- (0,1);
	\node[left] at (-1,0) {\tiny $j$};\node[right] at (2,0) {\tiny $l$};
	\node[below] at (0,-1) {\tiny $i$};\node[above] at (0,1) {\tiny $k$};
	\node[above right] at (0,0) {\tiny ${\(\sqrt{\frac{s\xi}{\kappa}},s\)}$};
}\)=\1_{i+j=k+l}\1_{i\geq l}\ (\kappa s/\xi)^l\frac{(\kappa s/\xi;q)_{i-l}(\kappa^{-1}s\xi;q)_l}{(s^2;q)_i}\frac{(q;q)_i}{(q;q)_{i-l}(q;q)_l}.
\end{equation}

We also need to clarify what we mean by a partition function of the semi-infinite row. Let $\lambda,\mu$ be a pair of partitions with $l(\lambda),l(\mu)<L$ for some $L\in\Z_{\geq 0}$, and consider the partition function of the first $L$ vertices of the row above, with the right outgoing edge having label $0$. Then the $L$th vertex of this finite row has labels $0$ around it and the weight $W^\s$ of this vertex is equal to $1$, hence we can reduce $L$ by $1$ without changing the partition function. So, the partition function of the finite row consisting of the first $L$ vertices does not depend on $L$ as long as  $l(\lambda),l(\mu)\leq L$, so we can drop $L$ from the notation and work with the semi-infinite row, with all but finitely many vertices having weight $1$. Note that the label of the horizontal edge between the $(L-1)$st and the $L$th columns is also uniquely determined by the conservation law in columns $1$, $2,\dots,L-1$, and the fact that it equals $0$ is equivalent to the fact that $\lambda_1=\mu_1+a$. 

The discussion above implies that all coefficients in \eqref{defT} are finite products of weights $W^\s$. By \eqref{Wredef}, all these weights are polynomials in $\kappa$, thus $\langle \lambda|\T_a(\kappa\,|\,\Xi,\S)|\mu\rangle$ is a polynomial in $\kappa$. Moreover, note that by the conservation law in columns $1,2,\dots, r-1$, the label of the horizontal edge immediately to the left of the $r$th column is uniquely determined and is equal to $\lambda_{r}-\mu_r$ (recall that $\lambda_1-\mu_1=a$). Hence, using vanishing of the weights $W^\s_{t,s}(i,j;k,l)$ for $l>i$, we have
 \be
 0\leq \lambda_1-\mu_1,\qquad 0\leq \lambda_r-\mu_r\leq \mu_{r-1}-\mu_r \quad \text{for}\ r\geq 2.
 \ee
 The condition above is equivalent to the \emph{interlacing} $\lambda\succ \mu$ of partitions, that is, $\lambda_1\geq\mu_1\geq\lambda_2\geq\mu_2\geq\dots$. So, we can sum up the discussion above as follows:
 \begin{equation}
 \label{vanishing}
 \langle \lambda | \T_a(\kappa\mid\Xi,\S)|\mu\rangle=0 \qquad \text{unless}\ \mu\prec\lambda\ \text{and}\ \lambda_1=\mu_1+a.
 \end{equation}
 
 Similarly, we define a dual row operator using the vertex weights $W^{\s*}_{s,t}$:
 \begin{equation}
 \label{defDualT}
 \T^*_a(\kappa\mid\Xi,\S):|\lambda\rangle\mapsto \(\frac{\kappa}{s_0\xi_0}\)^{\lambda_1}\sum_{\mu}W^{\s*}\(
 \tikzbase{0.9}{-2}{
	\draw[fused] (3.9,0) -- (-1,0);
	\foreach\x in {0,...,1}{
		\draw[fused] (2*\x,-1) -- (2*\x,1);
	}
	\node[left] at (-1,0) {\tiny $a$};
	\node[right] at (4,0) {\dots};
	\draw[fused*] (4.8,0) -- (6,0);
	\node[right] at (6,0) {\tiny $0$};
	\node[below] at (0,-1) {\tiny $m_1(\lambda')$};\node[above] at (0,1) {\tiny $m_1(\mu')$};
	\node[below] at (2,-1) {\tiny $m_2(\lambda')$};\node[above] at (2,1) {\tiny $m_2(\mu')$};
	\node[above right] at (0,0) {\tiny $\(\sqrt{\frac{s_1\xi_1}{\kappa}}, s_1 \)$};
	\node[above right] at (2,0) {\tiny $\(\sqrt{\frac{s_2\xi_2}{\kappa}}, s_2 \)$};
}
\)|\mu\rangle.
 \end{equation}
 Here the coefficients are given in terms of the partition function of the semi-infinite row of vertices with the weights $W^{\s*}$, where the parameters of $i$th vertex are again $\(\sqrt{\frac{s_i\xi_i}{\kappa}}, s_i\)$. Note that we also multiply the sum by a factor that depends only on $\lambda_1$. It will be needed in Proposition \ref{adjProp} below.
 
 Additionally, we need row operators constructed from vertices with $w^{\s*}$-weights. For $a\in\{0,1\}$ set
 \begin{equation}
 \label{defUnfusedDualT}
 T^*_a(u\mid\Xi,\S):|\lambda\rangle\mapsto \sum_{\mu}w^{\s*}\(
 \tikzbase{0.9}{-2}{
	\draw[unfused] (3.9,0) -- (-1,0);
	\foreach\x in {0,...,1}{
		\draw[fused] (2*\x,-1) -- (2*\x,1);
	}
	\node[left] at (-1,0) {\tiny $a$};
	\node[right] at (4,0) {\dots};
	\draw[unfused*] (4.8,0) -- (6,0);
	\node[right] at (6,0) {\tiny $0$};
	\node[below] at (0,-1) {\tiny $m_1(\lambda')$};\node[above] at (0,1) {\tiny $m_1(\mu')$};
	\node[below] at (2,-1) {\tiny $m_2(\lambda')$};\node[above] at (2,1) {\tiny $m_2(\mu')$};
	\node[above right] at (0,0) {\tiny $\(u\xi_1; s_1 \)$};
	\node[above right] at (2,0) {\tiny $\(u\xi_2; s_2 \)$};
}
\)|\mu\rangle,
 \end{equation}
 where in the partition function we are using a semi-infinite row of vertices with $w^{\s*}$-weights, setting the parameters of the $i$th vertex to $(u\xi_i; s_i)$. As for $\T_a$ and $\T^*_a$, due to the conservation law we can compute each coefficient $\langle\mu| T^*_a(u\mid\Xi,\S)|\lambda\rangle$ using only finite number of vertices.

\subsection{Linear combinations $\C,\B^*$ and $\tB$}\label{linearCombSect} It turns out that instead of working directly with the row operators $\T_a, \T_a^*$ and $T_a$, it is more convenient to define certain linear combinations of them. Here we follow the notation used in \cite{BW17}, and we denote these linear combinations by $\C(\kappa\mid\Xi,\S), \B^*(\kappa\mid\Xi,\S)$ and $\tB^*(u\mid\Xi,\S)$, setting:
\begin{equation}
\label{defC}
\C(\kappa\mid\Xi,\S):=\sum_{a\geq 0}(\kappa s_0/\xi_0)^{a}\frac{(\kappa^{-1}s_0\xi_0;q)_a}{(q;q)_a}\,\T_a(\kappa\mid\Xi,\S),
\end{equation}
\begin{equation}
\B^*(\kappa\mid\Xi,\S)=\sum_{a\geq 0}\T^*_a(\kappa\mid\Xi,\S),
\end{equation}
\begin{equation}
\label{deftB}
\tB^*(u\mid\Xi,\S)=\sum_{a=0}^1(-u\xi_0/s_0)^a\,T^*_a(u\mid\Xi,\S).
\end{equation}

One can readily see that both combinations $\B^*(\kappa\mid\Xi,\S)$ and $\tB^*(u\mid\Xi,\S)$ are actually well-defined linear operators in $\End(\V)$, though the linear combination $\C(\kappa\mid\Xi,\S)$ is not. In particular, compositions like $\tB^*(u\mid\Xi,\S)\C(\kappa\mid\Xi,\S)$ are not \emph{a priori} well-defined.  To partially remedy this issue, we treat $\C(\kappa\mid\Xi,\S)$ as an infinite matrix in $\V$ with respect to the basis $\{|\lambda\rangle\}_\lambda$\footnote{so we will often call $\C(\kappa\mid\Xi,\S)$ an operator}: in view of \eqref{vanishing} we have
\begin{equation}
\label{coefficientC}
\langle \lambda|\C(\kappa\mid\Xi,\S)|\mu\rangle=(\kappa s_0/\xi_0)^{\lambda_1-\mu_1}\frac{(\kappa^{-1} s_0\xi_0;q)_{\lambda_1-\mu_1}}{(q;q)_{\lambda_1-\mu_1}}\langle \lambda|\T_{\lambda_1-\mu_1}(\kappa\mid\Xi,\S)|\mu\rangle.
\end{equation}
Moreover, $\langle \lambda|\C(\kappa\mid\Xi,\S)|\mu\rangle=0$ unless  $\mu\prec\lambda$.

To justify compositions of such operators, consider a space $\widetilde{\End}(\V)$ consisting of infinite matrices $M$ with respect to the basis $\{|\lambda\rangle\}_\lambda$, satisfying
\be
\langle \lambda|M|\mu\rangle=0 \quad\text{unless}\ l(\lambda)\leq l(\mu)+A \ \text{and}\ \lambda_1\geq \mu_1-B,
\ee
for certain integers $A,B\in\Z_{\geq 0}$ depending on $M$. Then the usual matrix multiplication is well defined on the space $\widetilde{\End}(\V)$\footnote{Any matrix element of the product can be computed as a sum over partitions whose Young diagrams fit inside a finite rectangle.}, and $\C(\kappa\mid\Xi,\S)$ can be considered as an element of $\widetilde{\End}(\V)$ with $(A,B)=(1,0)$. Moreover, using the conservation law in a way similar to the derivation of \eqref{vanishing}, we have 
\begin{equation}
\label{Bvanishing}
\langle\lambda|\tB^*(u\mid\Xi,\S)|\mu\rangle=0\quad\text{unless}\ \lambda\prec\mu \ \text{and}\ \mu_1-\lambda_1\in\{0,1\}.
\end{equation}
Thus, $\tB^*(u\mid\Xi,\S)$ is also an element of $\widetilde{\End}(\V)$, with $(A,B)=(0,1)$, so arbitrary compositions of operators  $\C(\kappa\mid\Xi,\S)$ and $\tB^*(u\mid\Xi,\S)$ are well defined as elements of $\widetilde{\End}(\V)$.

The space $\widetilde{\End}(\V)$ provides only a partial resolution to the problem of interpreting $\C(\kappa\mid\Xi,\S)$ as an operator, since in general the operator $\B^*(\kappa\mid\Xi,\S)$ is not an element of $\widetilde{\End}(\V)$. Moreover, it turns out that the computation of the product $\B^*(\chi\mid\Xi,\S)\C(\kappa\mid\Xi,\S)$ in general requires infinite summations, \emph{cf.} Remark \ref{qGauss} below. Since we are not using the operators $\B^*(\kappa\mid\Xi,\S)$ for in a large part of the text, we postpone this issue until Theorem \ref{Cauchy}, where we will treat it using convergence of formal power sums.

The linear combinations $\C(\kappa\mid\Xi,\S)$ and $\B^*(\kappa\mid\Xi,\S)$ are closely related to each other:
\begin{prop}\label{adjProp}
For any pair of partitions $\lambda,\mu$, we have
\begin{equation}
\label{adj}
\langle\mu|\B^*(\kappa\mid\Xi,\S)|\lambda\rangle=\frac{(\tau_\Xi\S)^{2\mu}}{(\S)^{2\lambda}}\frac{\cc_{\tau_\Xi\S}(\mu)}{\cc_\S(\lambda)}\langle\lambda|\C(\kappa\mid\tau_\S\Xi,\tau_\Xi\S)|\mu\rangle,
\end{equation}
where $\tau_\S$ and $\tau_\Xi$ are mixed shift operators, and 
\be
\cc_\S(\lambda):=\prod_{i\geq1}\frac{(s_i^2;q)_{\lambda_i-\lambda_{i+1}}}{(q;q)_{\lambda_i-\lambda_{i+1}}},\qquad (\S)^{2\lambda}:=\prod_{i\geq 1}(s_{i-1})^{2\lambda_i}.
\ee
\end{prop}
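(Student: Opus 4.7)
My plan is to expand both sides as explicit single-term one-row partition functions and match them column by column. By the $\1_{i\geq l}$ vanishing in $W^\s$ and $W^{\s*}$ combined with arrow conservation, both $\langle\mu|\B^*(\kappa\mid\Xi,\S)|\lambda\rangle$ and $\langle\lambda|\C(\kappa\mid\tau_\S\Xi,\tau_\Xi\S)|\mu\rangle$ vanish unless $\mu\prec\lambda$. Assuming this, the left horizontal boundary of the $\T^*$-row is forced to be $a=\lambda_1-\mu_1$ (so only one summand in $\B^*=\sum_a\T^*_a$ contributes), and the intermediate horizontal labels in both rows are forced to be $h_j=\lambda_{j+1}-\mu_{j+1}$. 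Both matrix elements thus reduce to a single product over $j\geq 1$, which is effectively finite since the labels eventually vanish.

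Next I convert the $\B^*$-side product into the $\C$-side one in two steps. Step (i) applies \eqref{dualWWeights} column by column to rewrite $W^{\s*}_{t_j,s_j}(m_j(\lambda'),h_j;m_j(\mu'),h_{j-1})$ with $t_j=\sqrt{s_j\xi_j/\kappa}$ as an explicit scalar times $W^\s_{t_j,s_j}(m_j(\mu'),h_{j-1};m_j(\lambda'),h_j)$. Step (ii) observes that the $\C$-side parameters $(t^{(1)}_j,s^{(1)}_j)=(\sqrt{s_{j+1}\xi_{j+1}/\kappa},\sqrt{s_{j+1}\xi_{j+1}s_j/\xi_j})$ differ from $(t_j,s_j)$ by the common scaling $\eta_j^2=s_{j+1}\xi_{j+1}/(s_j\xi_j)$, and applies the identity
$$W^\s_{\eta t,\eta s}(i,j;k,l)=\frac{(\eta^2 t^2;q)_l}{(t^2;q)_l}\,\frac{(s^2;q)_i}{(\eta^2 s^2;q)_i}\,W^\s_{t,s}(i,j;k,l)$$
from the proof of Proposition \ref{defWYBprop}, one column at a time. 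After both steps the $W^\s$-product on the $\B^*$-side matches precisely the one appearing in $\langle\lambda|\C(\kappa\mid\tau_\S\Xi,\tau_\Xi\S)|\mu\rangle$ via \eqref{coefficientC} (using $s_0^{(1)}\xi_0^{(1)}=s_1\xi_1$ and $s_0^{(1)}/\xi_0^{(1)}=s_0/\xi_0$).

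It remains to identify the accumulated scalar. The Pochhammer quotients $(q;q)_{m_j(\lambda')}/(s_j^2;q)_{m_j(\lambda')}$ from step (i) produce $1/\cc_\S(\lambda)$, while the factors $(s_j^2;q)_{m_j(\mu')}/(q;q)_{m_j(\mu')}$ from step (i) combine with $(s_j^2;q)_{m_j(\mu')}/((s_j^{(1)})^2;q)_{m_j(\mu')}$ from step (ii) to yield $\cc_{\tau_\Xi\S}(\mu)$. The two telescoping $h_j$-products $\prod_j(t_j^2;q)_{h_{j-1}}(q;q)_{h_j}/((q;q)_{h_{j-1}}(t_j^2;q)_{h_j})$ and $\prod_j(t_{j+1}^2;q)_{h_j}/(t_j^2;q)_{h_j}$ cancel up to the residual factor $(t_1^2;q)_{h_0}/(q;q)_{h_0}=(\kappa^{-1}s_1\xi_1;q)_{\lambda_1-\mu_1}/(q;q)_{\lambda_1-\mu_1}$, which is exactly the Pochhammer prefactor coming from \eqref{coefficientC}. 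The main obstacle is the final monomial bookkeeping: the remaining factor $(\kappa/s_0\xi_0)^{\lambda_1}\prod_j s_j^{-2h_j}\prod_j(s_j\xi_j/\kappa)^{m_j(\mu')}$ divided by $(\kappa s_0/\xi_0)^{\lambda_1-\mu_1}$ must equal $(\tau_\Xi\S)^{2\mu}/(\S)^{2\lambda}=\prod_i(s_i\xi_is_{i-1}/\xi_{i-1})^{\mu_i}\prod_i s_{i-1}^{-2\lambda_i}$, which I would verify by substituting $h_j=\lambda_{j+1}-\mu_{j+1}$ and $m_j(\mu')=\mu_j-\mu_{j+1}$ and comparing the exponents of $s_0,\xi_0$ and of $s_j,\xi_j$ for $j\geq 1$ after telescoping; the matching is straightforward but requires some care.
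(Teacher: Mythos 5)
Your proposal is correct and follows essentially the same route as the paper's proof: both matrix elements are reduced via the conservation law to single products of vertex weights over columns, and each $W^{\s*}_{t_c,s_c}$ is converted into $W^{\s}_{\eta_c t_c,\eta_c s_c}$ with the $h_j$-Pochhammers telescoping to exactly the prefactor of \eqref{coefficientC}; the paper merely merges your steps (i) and (ii) into the single identity $W^{\s*}_{t_1,t_2}(i,l;k,j)=t_2^{-2l}\frac{(q;q)_i}{(t_2^2;q)_i}\frac{(\eta^2t_2^2;q)_k}{(q;q)_k}\, t_1^{2k}\frac{(t_1^2;q)_j}{(q;q)_j}\frac{(q;q)_l}{(\eta^2t_1^2;q)_l}\, W^\s_{\eta t_1,\eta t_2}(k,j;i,l)$. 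The only blemish is that your step-(ii) factors are written upside down --- replacing $W^{\s}_{t_j,s_j}$ by $W^{\s}_{\eta_j t_j,\eta_j s_j}$ contributes $\frac{(t_j^2;q)_{h_j}}{(t_{j+1}^2;q)_{h_j}}\frac{((s_j^{(1)})^2;q)_{m_j(\mu')}}{(s_j^2;q)_{m_j(\mu')}}$, the reciprocal of what you wrote --- but the combinations you then claim (namely $\cc_{\tau_\Xi\S}(\mu)$ and the residual $(\kappa^{-1}s_1\xi_1;q)_{\lambda_1-\mu_1}/(q;q)_{\lambda_1-\mu_1}$) are the correct ones, and the final monomial identity does check out.
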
 
\begin{proof}
For the duration of the proof, set 
\be
\tau_\S\Xi=(\xi'_0,\xi'_1,\xi'_2,\dots),\qquad \tau_\Xi\S=(s'_0,s'_1,s'_2,\dots).
\ee
In particular, we have $\xi'_is'_i=\xi_{i+1}s_{i+1}$ and $\xi'_i/s'_i=\xi_{i}/s_{i}$.

Recall that applying the conservation law to the row partition function $\langle \lambda|\T_a(\kappa\mid\Xi,\S)|\mu\rangle$ we can uniquely determine the labels of horizontal edges: the horizontal edge between columns $c-1$ and $c$ has label $\lambda_c-\mu_c$. Thus, using \eqref{coefficientC}, we can write an explicit expression for $\C(\kappa\mid\tau_\S\Xi,\tau_\Xi\S)$ in terms of the vertex weights $W^\s$:
\begin{equation} \label{CviaWexpression}
\langle\lambda|\C(\kappa\mid\tau_\S\Xi,\tau_\Xi\S)|\mu\rangle=(\kappa s_0/\xi_0)^{\lambda_1-\mu_1}\frac{(\kappa^{-1} s'_0\xi'_0;q)_{\lambda_1-\mu_1}}{(q;q)_{\lambda_1-\mu_1}}\prod_{c\geq 1}W^\s_{\sqrt{s'_{c}\xi'_{c}/\kappa},\,s'_c}\(m_c(\mu'), \lambda_c-\mu_c;m_c(\lambda'), \lambda_{c+1}-\mu_{c+1}\).
\end{equation}
Similarly, applying the same argument for the dual weights, we get 
\begin{equation} \label{BviaWexpression}
\langle\mu|\B^*(\kappa\mid\Xi,\S)|\lambda\rangle=\(\frac{\kappa}{\s_0\xi_0}\)^{\lambda_1}\prod_{c\geq 1}W^{\s*}_{\sqrt{s_{c}\xi_{c}/\kappa},\,s_c}\(m_c(\lambda'), \lambda_{c+1}-\mu_{c+1};m_c(\mu'), \lambda_{c}-\mu_{c}\).
\end{equation}
Now a direct computation using the explicit expressions from \eqref{Wdef} and \eqref{dualWWeights} shows that for any parameters $t_1,t_2$ and $\eta$ we have
\be
W^{\s*}_{t_1,t_2}(i,l;k,j)=t_2^{-2l}\frac{(q;q)_i}{(t_2^2;q)_i}\frac{(\eta^2t_2^2;q)_k}{(q;q)_k}\cdot t_1^{2k}\frac{(t_1^2;q)_j}{(q;q)_j}\frac{(q;q)_l}{(\eta^2t_1^2;q)_l} W^\s_{\eta t_1,\eta t_2}(k,j;i,l).
\ee
Substituting $t_1=\sqrt{s_c\xi_c/\kappa}$, $t_2=s_c$ and $\eta=\sqrt{\frac{s_{c+1}\xi_{c+1}}{s_c\xi_c}}$, we arrive at
\be
W^{\s*}_{\sqrt{s_c\xi_c/\kappa},\ s_c}(i,l;k,j)=s_c^{-2l}(\kappa^{-1}s_c\xi_c)^k\frac{(q;q)_i}{(s_c^2;q)_i}\frac{{(s_c'}^2;q)_k}{(q;q)_k}\frac{(\kappa^{-1}s_c\xi_c;q)_j}{(q;q)_j}\frac{(q;q)_l}{(\kappa^{-1}s'_{c}\xi'_{c};q)_l} W^\s_{\sqrt{s'_{c}\xi'_{c}/\kappa},\ s'_c}(k,j;i,l).
\ee
Applying the equation above to \eqref{CviaWexpression}, we see that all the Pochhammer symbols depending on $j,l$ cancel out due to the relation $s_c'\xi_c'=s_{c+1}\xi_{c+1}$, so we get
\be
\langle\lambda|\C(\kappa\mid\tau_\S\Xi,\tau_\Xi\S)|\mu\rangle=\(\frac{\kappa}{ s_0\xi_0}\)^{\lambda_1}\frac{\(\S\)^{2\lambda}}{\(\tau_\Xi\S\)^{2\mu}}\frac{\cc_{\S}(\lambda)}{\cc_{\tau_\Xi\S}(\mu)}\prod_{c\geq 1}W^{\s*}_{\sqrt{s_{c}\xi_{c}/\kappa}, s_c}\(m_c(\lambda'), \lambda_{c+1}-\mu_{c+1};m_c(\mu'), \lambda_{c}-\mu_{c}\),
\ee
where we have used the relation
\be
\frac{\(\S\)^{2\lambda}}{\(\tau_\Xi\S\)^{2\mu}}=(s_0\xi_0)^{\mu_1} s_0^{2\lambda_1-2\mu_1}\prod_{c=1}^{\infty} s_{c}^{2(\lambda_{c+1}-\mu_{c+1})} \prod_{c=1}^{\infty}\(s_c\xi_c\)^{\mu_{c+1}-\mu_{c}}.
\ee
The proof is finished by comparing with \eqref{BviaWexpression}.
\end{proof}

\begin{rem}
Up to a multiplication by a factor consisting of $(\S)^{2\lambda}$ and $\cc_\S(\lambda)$, our operators $\C(\kappa\mid\Xi,\S)$, $\B^*(\kappa\mid\Xi,\S)$ and $\tB(u\mid\Xi,\S)$ degenerate to the corresponding operators from \cite{BW17} when $\S=(s,s,s,\dots)$ and $\Xi=(1,1,1,\dots)$. We also want to clarify that the usage of the letter $T$ for row operators is not universally standard: for example, \cite{BP16b} and \cite{BW17} use letters $A,B,C,D$ for analogues of such operators. Partially to avoid confusion with this alternative notation we are using the tilde in the notation $\tB^*(u\mid\Xi,\S)$. Another reason is the connection with stable spin Hall-Littlewood functions described in Section \ref{HLsect} below.
\end{rem}

\subsection{Fusion of the operators $\tB^*$} Here we prove that in some cases the operator $\B^*(\kappa\mid\Xi,S)$ can be obtained using operators $\tB^*(u\mid\Xi,\S)$. This will only be used later as a step in the proof of Theorem \ref{Cauchy}.

\begin{prop} \label{Bfusion}For any $J\in\Z_{\geq 1}$ we have
\be
\tB^*(1\mid\S,\S)\tB^*(q\mid\S,\S)\dots \tB^*(q^{J-1}\mid\S,\S)=\B^*(q^{J}\mid\bar\S,\S),
\ee
where 
\be
\bar\S:=(s_0^{-1}, s_1^{-1},\dots).
\ee
\end{prop}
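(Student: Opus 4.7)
The plan is a column-by-column fusion argument, in the spirit of Section~\ref{fusionSection} but applied to dual weights. First I would write both sides as partition functions: $\langle\mu|\tB^*(1\mid\S,\S)\tB^*(q\mid\S,\S)\cdots\tB^*(q^{J-1}\mid\S,\S)|\lambda\rangle$ becomes a $J\times\infty$ vertex model with $w^{\s*}_{q^r s_i;s_i}$ at row $r\in\{0,\dots,J-1\}$ (from the bottom) and column $i\ge 1$ (using $\Xi=\S$ so that $\xi_i=s_i$); the left boundary label of row $r$ is summed over $\{0,1\}$ with weight $(-q^r)^{a_r}$; the right boundary is $0$; and the bottom (resp.\ top) vertical labels at column $i$ are $m_i(\lambda')$ (resp.\ $m_i(\mu')$). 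Likewise, $\langle\mu|\B^*(q^J\mid\bar\S,\S)|\lambda\rangle$ is a single semi-infinite row of $W^{\s*}_{q^{-J/2},s_i}$ vertices (since $\Xi=\bar\S$ gives $\sqrt{s_i\bar\xi_i/q^J}=q^{-J/2}$), prefactored by $(q^J/(s_0\bar\xi_0))^{\lambda_1}=q^{J\lambda_1}$.

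At each column~$i$, the $J$ dual vertices in the stack carry spectral parameters $s_i,qs_i,\dots,q^{J-1}s_i$ and common spin $s_i$; this is precisely the fusion pattern of Section~\ref{fusionSection} at $u=s=s_i$, and the analytic continuation $q^J=t^{-2}$ (so $t=q^{-J/2}$) turns $\W^{(J)}_{s_i;s_i}$ into $W^{\s}_{q^{-J/2},s_i}$. I would execute the dual fusion in three moves: (a) use \eqref{dualWeights} vertex-by-vertex to rewrite the $J$-stack as a product of $w^{\s}$-weights with vertical orientation reversed, noting that the Pochhammer prefactors telescope to $s_i^{-2L^{(i)}}\,(q;q)_{m_i(\lambda')}(s_i^2;q)_{m_i(\mu')}/\bigl[(s_i^2;q)_{m_i(\lambda')}(q;q)_{m_i(\mu')}\bigr]$, where $L^{(i)}$ is the total horizontal flow from the right at column~$i$; (b) invoke \eqref{fusion}--\eqref{multiFusion} on the resulting $w^{\s}$-row (after restoring the canonical order of spectral parameters using \eqref{higherSpinYangBaxter}), identifying the $q^{r a_r}$ part of the $\tB^*$ weights with the fusion $q^{(r'-1)a_{r'}}$-weights on the leftmost boundary with $r'=r+1$; and (c) apply \eqref{dualWWeights} to recast $W^{\s}_{q^{-J/2},s_i}$ as $W^{\s*}_{q^{-J/2},s_i}$.

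What remains is to reconcile the global prefactors. The signs $(-1)^{a_r}$ in the $\tB^*$ weights multiply to $(-1)^{\sum_r a_r}=(-1)^{\lambda_1-\mu_1}$ (via the telescoping $\sum_r a_r=\sum_r(\nu^{(r)}_1-\nu^{(r+1)}_1)=\lambda_1-\mu_1$ with $\nu^{(0)}=\lambda,\nu^{(J)}=\mu$ the intermediate partitions), while the $t^{2k}=q^{-Jm_i(\mu')}$ factors in \eqref{dualWWeights} contribute $q^{-J\mu_1}$ in total, which combines with the right-hand side prefactor $q^{J\lambda_1}$ to yield the net $q^{J(\lambda_1-\mu_1)}$; these should cancel exactly against the accumulated signs and $q$-powers from step~(b). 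As a sanity check, the $J=1$ case can be verified directly from \eqref{dualWeightsTable} and \eqref{dualWWeights}: column-by-column one finds $w^{\s*}_{s_i;s_i}(i,l;k,j)/W^{\s*}_{q^{-1/2},s_i}(i,l;k,j)=(-1)^{j+l}q^{m_i(\lambda')}$, whose product over all columns equals $(-1)^{\lambda_1-\mu_1}q^{\lambda_1}$ by the telescoping identities $\sum_i[(\lambda_i-\mu_i)+(\lambda_{i+1}-\mu_{i+1})]\equiv\lambda_1-\mu_1\pmod 2$ and $\sum_i m_i(\lambda')=\lambda_1$, precisely reconciling the $(-1)^{\lambda_1-\mu_1}$ sign from $\tB^*(1)=T^*_0-T^*_1$ with the $q^{\lambda_1}$ prefactor of $\B^*(q\mid\bar\S,\S)$.

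The main obstacle is the meticulous bookkeeping: tracking how the $(-q^r)^{a_r}$ weights of $\tB^*$ pass, via the standard fusion $q^{(r-1)a_r}$-weights, into the ``dual correction'' factor $t^{2k}(t^2;q)_j(q;q)_l/[(q;q)_j(t^2;q)_l]$ that distinguishes $W^{\s*}$ from $W^{\s}$ in \eqref{dualWWeights}. Each telescoping identity involved is elementary, but their interplay must be coordinated carefully.
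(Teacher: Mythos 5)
Your proposal follows essentially the same route as the paper's own proof: both dualize the fusion identity \eqref{fusion} at the $u=s$ specialization by stacking \eqref{dualWeights} vertically (the paper's \eqref{stackedDualWeights}), convert $W^{\s}_{q^{-J/2},s}$ to $W^{\s*}_{q^{-J/2},s}$ via \eqref{dualWWeights}, and then stack the resulting single-column identity horizontally before matching the $(-1)^{a_r}q^{ra_r}$ coefficients of $\tB^*$ against the fusion weights and the $q^{J\lambda_1}$ prefactor of $\B^*$. The paper packages the sign and $q$-power bookkeeping into the identity $Z_j(J)=(-1)^jq^{Jj}(q^{-J};q)_j/(q;q)_j$ and the per-column relation \eqref{dualFusion}, which is exactly the cancellation you flag as the remaining verification.
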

\begin{proof}
This is a reformulation of the fusion procedure described in Section \ref{fusionSection}. We start with an observation that the relation \eqref{dualWeights} between the weights $w^\s_{u;s}$ and $w^{\s*}_{u;s}$ can be stacked vertically to obtain
\begin{equation}
\label{stackedDualWeights}
\tikz{0.8}{
	\foreach\y in {2,...,3}{
		\draw[unfused] (3,\y) -- (1,\y);
	}
	\draw[unfused] (3,4.5) -- (1,4.5);
	\draw[fused*] (2,1) -- (2, 3.5) node[above, scale = 0.6] {\color{black} $\vdots$};
	\draw[fused] (2,4) -- (2,5.5);
	\node[above] at (2,5.5) {$k$};
	\node[left] at (1,2) {$a_1$};
	\node[left] at (1,3) {$a_2$};
	\node[left] at (1,4.5) {$a_{J}$};
	\node[below] at (2,1) {$i$};
	\node[right] at (3,2) {$b_1$};
	\node[right] at (3,3) {$b_2$};
	\node[right] at (3,4.5) {$b_{J}$};
	\node[above right, scale = 0.8] at (2,2) {$w^{\s*}_{u_1;s}$};
	\node[above right, scale = 0.8] at (2,3) {$w^{\s*}_{u_2;s}$};	
	\node[above right, scale = 0.8] at (2,4.5) {$w^{\s*}_{u_J;s}$};	
}
=
s^{-2\sum_{r=1}^Jb_r}\frac{(q;q)_i}{(s^2;q)_i}\frac{(s^2;q)_k}{(q;q)_k}
\(\tikz{0.8}{
	\draw[unfused] (1,2) -- (3,2);
	\draw[unfused] (1,3.5) -- (3,3.5);
	\draw[unfused] (1,4.5) -- (3,4.5);
	\draw[fused*] (2,1) -- (2, 2.5) node[above, scale = 0.6] {\color{black} $\vdots$};
	\draw[fused] (2,3) -- (2,5.5);
	\node[above] at (2,5.5) {$i$};
	\node[left] at (1,2) {$a_J$};
	\node[left] at (1,3.5) {$a_2$};
	\node[left] at (1,4.5) {$a_{1}$};
	\node[below] at (2,1) {$k$};
	\node[right] at (3,2) {$b_J$};
	\node[right] at (3,3.5) {$b_2$};
	\node[right] at (3,4.5) {$b_{1}$};
	\node[above right, scale = 0.8] at (2,2) {$w^\s_{u_J;s}$};
	\node[above right, scale = 0.8] at (2,3.5) {$w^\s_{u_2;s}$};	
	\node[above right, scale = 0.8] at (2,4.5) {$w^\s_{u_1;s}$};	
}\)
\end{equation}
At the same time, using the defining relation \eqref{dualWWeights} for the weights $W^{\s*}_{t,s}$, we have
\be
q^{kJ}\frac{(q^{-J};q)_l}{(q;q)_l}\frac{(q;q)_j}{(q^{-J};q)_j}
\tikz{1}{
	\draw[fused] (1,0) node[right] {\color{black} \tiny $l$} -- (-1, 0) node[left] {\color{black} \tiny $j$};
	\draw[fused] (0,-1) node[below] {\color{black} \tiny $i$} -- (0, 1) node[above] {\color{black} \tiny $k$};
	\node[above right] at (0,0) {\tiny $W^{\s*}_{q^{-J/2},s}$};
}=s^{-2l}\frac{(q;q)_i}{(s^2;q)_i}\frac{(s^2;q)_k}{(q;q)_k}
\tikz{1}{
	\draw[fused] (-1,0) node[left] {\color{black} \tiny $j$} -- (1, 0) node[right] {\color{black} \tiny $l$};
	\draw[fused] (0,-1) node[below] {\color{black} \tiny $k$} -- (0, 1) node[above] {\color{black} \tiny $i$};
	\node[above right] at (0,0.05) {\tiny $W^{\s}_{q^{-J/2},s}$};
}.
\ee
Applying the identities above to \eqref{fusion} and using the simplifying specialization $u=s$ from Section \ref{simplify}, we get
\begin{equation}
\label{dualFusion}(-1)^{l-j}q^{iJ}
\tikz{0.9}{
	\draw[fused] (1,0) node[right] {\color{black} \tiny $l$} -- (-1, 0) node[left] {\color{black} \tiny $j$};
	\draw[fused] (0,-1) node[below] {\color{black} \tiny $i$} -- (0, 1) node[above] {\color{black} \tiny $k$};
	\node[above right] at (0,0) {\tiny $W^{\s*}_{q^{-J/2},s}$};
}
=q^{-\sum_{r=1}^J(r-1)b_r}\sum_{a_1+\dots+a_J=j}q^{\sum_{r=1}^J(r-1)a_r}\(
\tikz{0.8}{
	\draw[unfused] (3.5,2) -- (1,2);
	\draw[unfused] (3.5,3.5) -- (1,3.5);
	\draw[unfused] (3.5,4.5) -- (1,4.5);
	\draw[fused*] (2,1) -- (2, 2.5) node[above, scale = 0.6] {\color{black} $\vdots$};
	\draw[fused] (2,3) -- (2,5.5);
	\node[above] at (2,5.5) {$k$};
	\node[left] at (1,2) {$a_J$};
	\node[left] at (1,3.5) {$a_2$};
	\node[left] at (1,4.5) {$a_{1}$};
	\node[below] at (2,1) {$i$};
	\node[right] at (3.5,2) {$b_J$};
	\node[right] at (3.5,3.5) {$b_2$};
	\node[right] at (3.5,4.5) {$b_{1}$};
	\node[above right, scale = 0.8] at (2,2) {$w^{\s*}_{sq^{J-1},s}$};
	\node[above right, scale = 0.8] at (2,3.5) {$w^{\s*}_{sq,s}$};	
	\node[above right, scale = 0.8] at (2,4.5) {$w^{\s*}_{s,s}$};	
}\),
\end{equation}
where $b_1,\dots, b_J\in\{0,1\}$ satisfy $\sum_{r=1}^Jb_r=l$, and we have used the relation
\be
Z_j(J)=(-1)^jq^{Jj}\frac{(q^{-J};q)_{j}}{(q;q)_{j}}.
\ee

Let $\lambda,\mu$ be partitions and take $L>l(\lambda), l(\mu)$. Similarly to \eqref{multiFusion} we can fuse $L$ dual columns by stacking \eqref{dualFusion} horizontally $L$ times with the parameters $s$ being equal to $s_1,s_2,\dots, s_L$. Setting the bottom (resp. top) boundary condition to a configuration corresponding to $|\lambda\rangle$ (resp. $\langle\mu|$), and assuming that all boundary edges on the right have labels equal to $0$, we see that the parameter $L$ becomes irrelevant and we can take $L\to\infty$ to get, using notations \eqref{defDualT} and  \eqref{defUnfusedDualT},
\begin{equation}
\label{dualFusionOp}
(-1)^j\langle\mu|\T^*_j(q^J\mid\bar\S,\S)|\lambda\rangle=\sum_{a_1+\dots+a_J=j}q^{\sum_{r=1}^J(r-1)a_r}\langle\mu|T_{a_1}^*(1\mid\S,\S) T_{a_2}^*(q\mid\S,\S)\dots T_{a_J}^*(q^{J-1}\mid\S,\S)|\lambda\rangle.
\end{equation}
Recall that
\be
\B^*(q^J\mid\bar\S,\S)=\sum_{j\geq 0}\T_j^*(q^J\mid\bar\S,\S),
\ee
\be
\tB^*(q^{r-1}\mid\S,\S)=\sum_{a_r\in\{0,1\}}(-1)^{a_r}q^{(r-1)a_r}\, T_{a_r}^*(q^{r-1}\mid\S,\S),\quad r=1,\dots,J,
\ee
so the claim follows after multiplying \eqref{dualFusionOp} by $(-1)^j$ and summing over $j$.
\end{proof}

\subsection{Infinite 0th column.} There is a useful interpretation of the linear combinations $\C(\kappa\mid\Xi,\S)$ and $\tB^*(u\mid\Xi,\S)$ in terms of the operators $\T_a$ and $T^*_a$ with an additional 0th column with ``infinite" vertical labels. For a partition $\lambda$ and an integer $N\geq \lambda_1$, let $\widehat{\lambda}^{(N)}$ denote the partition $(N, \lambda_1, \lambda_2,\dots)$, and set $\widehat{\Xi}=(1,\xi_0,\xi_1,\dots)$, $\widehat{\S}=(1,s_0,s_1,\dots)$.

\begin{prop} \label{infiniteColumn}For any partitions $\lambda,\mu$ and any $a\in\Z_{\geq 0}$, we have
\begin{equation*}
\langle \lambda| \C(\kappa\mid\Xi,\S)|\mu\rangle=\frac{(s_0^2;q)_\infty}{(\kappa s_0\xi_0^{-1};q)_{\infty}}\lim_{N\to\infty}\langle \widehat{\lambda}^{(N+a)}| \T_a(\kappa\mid\widehat{\Xi},\widehat{\S})|\widehat{\mu}^{(N)}\rangle,
\end{equation*}
where the convergence holds in the space of formal power series in $q$ or numerically if $|q|<1$. Similarly, for any $i\in\{0,1\}$ we have
\begin{equation*}
\langle\mu| \tB^*(u\mid\Xi,\S)|\lambda\rangle=(1-us_0\xi_0)\lim_{N\to\infty}\langle \widehat{\mu}^{(N-i)}| T^*_i(u\mid\widehat{\Xi},\widehat{\S})|\widehat{\lambda}^{(N)}\rangle,
\end{equation*}
where the convergence holds in the space of formal power series in $q$ or numerically if $|q|<1$.
\end{prop}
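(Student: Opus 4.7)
My plan is to exploit the same factorization mechanism for both identities: prepending a part $N$ to each partition creates a new ``zeroth'' column in the row whose boundary labels are large ($N$-dependent), while the remaining columns reduce to the original row operator with an appropriate choice of left boundary, by conservation of arrows at the new column. Taking $N\to\infty$ then only affects the weight of this new column, and the required factor pops out.

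For the $\C$-part: I would compute $\langle\widehat{\lambda}^{(N+a)}|\T_a(\kappa\mid\widehat\Xi,\widehat\S)|\widehat{\mu}^{(N)}\rangle$ by noting that the new leftmost column (column $1$ in $\widehat{\S},\widehat{\Xi}$-indexing, with parameters $(\sqrt{s_0\xi_0/\kappa},s_0)$) carries bottom label $N-\mu_1$, top label $N+a-\lambda_1$, left label $a$, hence by conservation the horizontal label leaving it on the right equals $\lambda_1-\mu_1$. Because the remaining columns have parameters $(\sqrt{s_i\xi_i/\kappa},s_i)$ for $i\ge 1$ with left boundary $\lambda_1-\mu_1$ and right boundary $0$, they reproduce exactly the partition function for $\langle\lambda|\T_{\lambda_1-\mu_1}(\kappa\mid\Xi,\S)|\mu\rangle$. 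The column-$1$ weight, via \eqref{Wredef}, equals
\begin{equation*}
(\kappa s_0/\xi_0)^{\lambda_1-\mu_1}\frac{(\kappa s_0/\xi_0;q)_{N-\lambda_1}(\kappa^{-1}s_0\xi_0;q)_{\lambda_1-\mu_1}}{(s_0^2;q)_{N-\mu_1}}\frac{(q;q)_{N-\mu_1}}{(q;q)_{N-\lambda_1}(q;q)_{\lambda_1-\mu_1}},
\end{equation*}
which is manifestly independent of $a$ and, as $N\to\infty$, produces $(q;q)_{N-\mu_1}/(q;q)_{N-\lambda_1}\to 1$ and the two Pochhammer symbols $(\kappa s_0/\xi_0;q)_\infty/(s_0^2;q)_\infty$. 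Comparing with \eqref{coefficientC} then yields the stated formula, with the same convergence statement as $q$-Pochhammers (formal in $q$ or numerical for $|q|<1$).

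For the $\tB^*$-part: the argument is parallel, but uses the four explicit entries from \eqref{dualWeightsTable} instead. The new column $1$ has parameters $(u\xi_0;s_0)$, bottom $N-\lambda_1$, top $N-i-\mu_1$, left $i\in\{0,1\}$, and by conservation right $\lambda_1-\mu_1\in\{0,1\}$; the rest of the row reproduces $\langle\mu|T^*_{\lambda_1-\mu_1}(u\mid\Xi,\S)|\lambda\rangle$. Inspecting each of the four allowed $(i,\lambda_1-\mu_1)$ cases in \eqref{dualWeightsTable} and letting $N\to\infty$ gives a column weight of $(-u\xi_0/s_0)^{\lambda_1-\mu_1}/(1-us_0\xi_0)$ in \emph{every} case (the $q^{N-\mu_1}$ and $q^{N-\lambda_1}$ factors vanish in both the formal and numerical senses). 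This is exactly independent of $i$ and matches the expansion $\tB^*(u\mid\Xi,\S)=\sum_{a=0}^1(-u\xi_0/s_0)^a T^*_a(u\mid\Xi,\S)$ once we observe that $\langle\mu|T^*_a(u\mid\Xi,\S)|\lambda\rangle$ is nonzero only for $a=\lambda_1-\mu_1$ by the conservation law.

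The only place requiring any care is verifying that the column-$1$ weights indeed factor out cleanly so that the limit does not depend on $a$ or $i$; this is what makes the statement nontrivial and is the reason the formal coefficient on the right is forced to be $(s_0^2;q)_\infty/(\kappa s_0\xi_0^{-1};q)_\infty$ (respectively $(1-us_0\xi_0)$). Once the bookkeeping of arrow conservation in the modified leftmost column is set up, the rest is a direct computation using the explicit formulas \eqref{Wredef} and \eqref{dualWeightsTable}, plus the elementary fact that $(q;q)_{N+c}\to(q;q)_\infty$ as $N\to\infty$ for any fixed $c$.
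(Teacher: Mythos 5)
Your proof is correct and follows essentially the same route as the paper's: peel off the new leftmost column (whose right label is forced to be $\lambda_1-\mu_1$ by arrow conservation), identify the remaining columns with the original row operator, and compute the $N\to\infty$ limit of that single column's weight from \eqref{Wredef} (respectively \eqref{dualWeightsTable}), which produces exactly the prefactors $(\kappa s_0/\xi_0)^{\Delta}(\kappa^{-1}s_0\xi_0;q)_{\Delta}/(q;q)_{\Delta}$ and $(-u\xi_0/s_0)^{\Delta}$ appearing in \eqref{coefficientC} and \eqref{deftB}. The bookkeeping of labels and the limiting weights all check out against the paper's own computation.
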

\begin{proof}
We start with the first equality. Recall that from \eqref{coefficientC} we have
\be
\langle \lambda| \C(\kappa\mid\Xi,\S)|\mu\rangle=(\kappa s_0/\xi_0)^{\Delta}\frac{(\kappa^{-1} s_0\xi_0;q)_{\Delta}}{(q;q)_{\Delta}}  \langle \lambda| \T_{\Delta}(\kappa\mid\Xi,\S)|\mu\rangle,
\ee
where $\Delta=\lambda_1-\mu_1$. On the other hand, separating the first vertex of the row partition function used to define the operator $\T_a$, we have
\be
\langle \widehat{\lambda}^{(N+a)}| \T_a(\kappa\mid\widehat{\Xi},\widehat{\S})|\widehat{\mu}^{(N)}\rangle=W^\s\(
\tikzbase{0.8}{-4}{
	\draw[fused] (-1,0) -- (2,0);
	\draw[fused] (0,-1) -- (0,1);
	\node[left] at (-1,0) {\tiny $a$};\node[right] at (2,0) {\tiny $\Delta$};
	\node[below] at (0,-1) {\tiny $N-\mu_1$};\node[above] at (0,1) {\tiny $N+a-\lambda_1$};
	\node[above right] at (0,0) {\tiny $\(\sqrt{\frac{\xi_0s_0}{\kappa}},s_0\)$};
}\)\langle\lambda| \T_\Delta(\kappa\mid\Xi,\S)|\mu\rangle.
\ee
Hence, the claim follows from the limiting relation (\emph{cf.} \eqref{Wredef})
\be
\lim_{N\to\infty}W^\s_{\sqrt{\xi_0s_0/\kappa},s_0}(N-\mu_1,i;N-\lambda_1+i,\Delta)=(\kappa s_0/\xi_0)^{\Delta}\frac{(\kappa s_0/\xi_0;q)_{\infty}}{(s_0^2;q)_\infty}\frac{(\kappa^{-1} s_0\xi_0;q)_{\Delta}}{(q;q)_{\Delta}}.
\ee

The other equality follows similarly, using the limiting relation (\emph{cf.} \eqref{dualWeightsTable})
\be
\lim_{N\to\infty}w^{\s*}_{u\xi_0;s_0}(N-\lambda_1,i;N-\mu_1-i,\Delta)=\frac{1}{1-u\xi_0s_0}(-u\xi_0/s_0)^\Delta
\ee
for $i,\Delta\in\{0,1\}$.
\end{proof}

\subsection{Exchange relations.} The vertex weights we are using are distinguished by the Yang-Baxter equation, which makes the models with these weights integrable. In terms of row operators, the Yang-Baxter equation turns into so-called \emph{exchange relations}. In this section we cover the exchange relations between operators $\C(\kappa\mid\Xi,\S)$ and $\tB^*(u\mid\Xi,\S)$, which will result in various algebraic properties of the symmetric functions constructed in the next section. We start with relations resembling commutativity of operators.
\begin{prop}
\label{commutation}
The following relations hold:
\be
\C(\kappa_1\mid\Xi,\S)\C(\kappa_2\mid\tau_\S\Xi,\tau_\Xi\S)=\C(\kappa_2\mid\Xi,\S)\C(\kappa_1\mid\tau_\S\Xi,\tau_\Xi\S),
\ee
\be
\tB^*(u_1\mid\Xi,\S)\tB^*(u_2\mid\Xi,\S)=\tB^*(u_2\mid\Xi,\S)\tB^*(u_1\mid\Xi,\S).
\ee
\end{prop}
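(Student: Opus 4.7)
The plan is to prove both identities by the \emph{zipper argument}, the standard technique in this area: express each side as a partition function of a two-row vertex model, attach an auxiliary tilted crossing vertex on the far right (where the all-zero boundary condition makes its contribution trivial or a computable scalar), and move it column by column to the far left by repeated applications of the appropriate Yang-Baxter equation. Each such application swaps the two rows' spectral parameters in that column.

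For the $\tB^*$-commutation both row operators carry the same inhomogeneities $(\Xi,\S)$, so the top and bottom rows share the spin $s_i$ at every column. The ordinary dual Yang-Baxter equation \eqref{higherSpinDualYangBaxter} with crossing $\R^*_{u_1/u_2}$ applies directly. Attached on the far right with zero boundary labels, the crossing contributes the scalar $\R^*_{u_1/u_2}(0,0;0,0)=1-qu_2/u_1$. After propagating it across all columns the crossing lands on the left of column $1$, where the left-incoming labels $a_1,a_2\in\{0,1\}$ are summed with the prefactors $(-u_j\xi_0/s_0)^{a_j}$ from \eqref{deftB}. A direct $2\times 2$ calculation using the explicit table of $\R^*_z$-values shows that this boundary sum produces the $\tB^*$ prefactors with $u_1,u_2$ swapped, times the same scalar $1-qu_2/u_1$ that appeared on the right; the scalar factors out and the commutation follows.

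For the $\C$-commutation the bottom row uses the shifted inhomogeneities $(\tau_\S\Xi,\tau_\Xi\S)$, so at column $i\geq 1$ the top vertex $W^\s_{\sqrt{s_i\xi_i/\kappa_1},\,s_i}$ and the bottom vertex $W^\s_{\sqrt{s_{i+1}\xi_{i+1}/\kappa_2},\,s^{(1)}_i}$ with $s^{(1)}_i=\sqrt{s_is_{i+1}\xi_{i+1}/\xi_i}$ have different spin parameters. This forces the use of the \emph{deformed} Yang-Baxter equation of Proposition~\ref{defWYBprop}: in \eqref{defWYB} I would set $t_1=\sqrt{s_i\xi_i/\kappa_1}$, $t_2=\sqrt{s_i\xi_i/\kappa_2}$, $t_3=s_i$, and take the column-dependent deformation $\eta=\eta_i:=\sqrt{s_{i+1}\xi_{i+1}/(s_i\xi_i)}$, so that $\eta_i t_3=s^{(1)}_i$ and $\eta_i t_2=\sqrt{s_{i+1}\xi_{i+1}/\kappa_2}$ match the bottom row exactly. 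The YBE at column $i$ then replaces the two row vertices by their $\kappa_1\leftrightarrow\kappa_2$ swapped versions---precisely the column-$i$ vertices of $\C(\kappa_2\mid\Xi,\S)\C(\kappa_1\mid\tau_\S\Xi,\tau_\Xi\S)$---and rescales the crossing's parameters from $(\sqrt{s_{i+1}\xi_{i+1}/\kappa_1},\sqrt{s_{i+1}\xi_{i+1}/\kappa_2})$ to $(\sqrt{s_i\xi_i/\kappa_1},\sqrt{s_i\xi_i/\kappa_2})$. These match the parameters demanded at column $i-1$, so the zipper is consistent. Starting from a trivial $W^\s(0,0;0,0)=1$ crossing on the far right and zipping all the way through, I obtain the right-hand-side two-row partition function with a leftover tilted vertex $W^\s_{\sqrt{s_1\xi_1/\kappa_1},\sqrt{s_1\xi_1/\kappa_2}}$ stuck on the far left.

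The main technical obstacle is disposing of this leftmost crossing. My plan is to invoke Proposition~\ref{infiniteColumn} and realize each $\C$-operator as a limit of $\T_a$ in the enlarged data $(\widehat\Xi,\widehat\S)=((1,\xi_0,\xi_1,\dots),(1,s_0,s_1,\dots))$, and then to extend the zipper through the additional $0$th column. After this extension the terminal crossing carries parameters $(\sqrt{s_0\xi_0/\kappa_1},\sqrt{s_0\xi_0/\kappa_2})$, and its contribution combines with the large-$N$ asymptotics of the two extra $0$th-column vertex weights (which yield precisely the $\C$ prefactors of \eqref{defC}) to produce a $q$-Chu--Vandermonde-type summation. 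The identity to verify states that the weighted sum of $W^\s$-values against the $\C$-prefactors in $\kappa_1,\kappa_2$ equals the corresponding prefactors with $\kappa_1,\kappa_2$ interchanged (up to a label-independent common factor); once this $q$-series identity is established, both operator commutations of Proposition~\ref{commutation} follow immediately.
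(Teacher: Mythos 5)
Your core strategy coincides with the paper's: both commutations are proved by a zipper argument, using the ordinary dual Yang--Baxter equation \eqref{higherSpinDualYangBaxter} for the $\tB^*$-relation and the deformed equation \eqref{defWYB} of Proposition \ref{defWYBprop} for the $\C$-relation, with exactly the same column-dependent choice $\eta_i=\sqrt{s_{i+1}\xi_{i+1}/(s_i\xi_i)}$ and the same consistency check $\xi_i's_i'=\xi_{i+1}s_{i+1}$ that lets the cross emerging from column $i$ enter column $i-1$. Where you diverge is at the left boundary. The paper never computes a boundary sum: it first proves the commutation for the $a=0$ row operators $T^*_0$ and $\T_0$ with zero labels on both lateral boundaries, so that the conservation law forces the terminal tilted cross into the all-zero configuration (the same harmless scalar on both sides), and then upgrades to $\tB^*$ and $\C$ by applying the identity to the hatted data $\widehat\Xi,\widehat\S,\widehat\lambda^{(N)},\widehat\mu^{(N)}$ and invoking Proposition \ref{infiniteColumn}; the resulting prefactors $(1-u_1s_0\xi_0)(1-u_2s_0\xi_0)$, resp.\ $\prod_i(s_0^2;q)_\infty/(\kappa_i s_0/\xi_0;q)_\infty$, are symmetric in the spectral parameters and cancel. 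Your direct $2\times 2$ computation for $\tB^*$ is a correct alternative to this: one checks that $\sum_{i,j}x^iy^j\R_z(i,j;k,l)=(1-qz)\,y^l x^k$ whenever $x=yz$, which with $x,y$ equal to the prefactors $(-u\xi_0/s_0)$ of \eqref{deftB} and $z=u_2/u_1$ is exactly the required swap, with the scalar matching the $\R^*_{u_1/u_2}(0,0;0,0)$ picked up on the right.

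The genuine gap is in the final step of the $\C$-commutation. By keeping the terminal cross to the left of the $0$th column, you are forced to prove that the vector of $\C$-prefactors $a\mapsto(\kappa s_0/\xi_0)^a(\kappa^{-1}s_0\xi_0;q)_a/(q;q)_a$ from \eqref{defC} is, up to a scalar symmetric in $\kappa_1,\kappa_2$, an eigenvector of the cross $W^\s_{\sqrt{s_0\xi_0/\kappa_1},\sqrt{s_0\xi_0/\kappa_2}}$ --- an infinite family of $q$-Chu--Vandermonde-type identities that you state but do not verify, and which is the hardest and least justified step of your plan. The identity is in fact true (it is the fused version of your own rank-one boundary computation), but it is also unnecessary: Proposition \ref{infiniteColumn} holds for any $a$, in particular for $a=0$, so after passing to the hatted model the left boundary labels of both rows are already $0$; the terminal cross is then forced by conservation to be the trivial configuration with $W^\s(0,0;0,0)=1$, and no boundary summation is needed. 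This is precisely how the paper closes the argument, and adopting it would make your proof complete.
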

\begin{proof}
We start with the second relation. It suffices to prove that for any partitions $\mu,\lambda$ the following relation holds:
\be
\langle\mu|\tB^*(u_1\mid\Xi,\S)\tB^*(u_2\mid\Xi,\S)|\lambda\rangle=\langle\mu|\tB^*(u_2\mid\Xi,\S)\tB^*(u_1\mid\Xi,\S)|\lambda\rangle.
\ee 
Note that the tilted cross weights $\R^*$ in the Yang-Baxter equation \eqref{higherSpinDualYangBaxter} depend only on the ratio $x/y$. Hence, after setting $s=s_r$, $x=u_1\xi_r$, $y=u_2\xi_r$, the Yang-Baxter equation \eqref{higherSpinDualYangBaxter} gives
\be
\tikzbase{1}{-3}{
	\draw[unfused] 
	(2.5,0) node[below,scale=0.6] {\color{black} $b_3$} -- (1.5,1)  -- (-1,1) node[left,scale=0.6] {\color{black} $a_3$};
	\draw[unfused] 
	(2.5,1) node[above,scale=0.6] {\color{black} $b_2$} -- (1.5,0)  -- (-1,0)  node[left,scale=0.6] {\color{black} $a_2$};
	\draw[fused] 
	(0,-1) node[below,scale=0.6] {\color{black} $a_1$} -- (0,0.5)  -- (0,2) node[above,scale=0.6] {\color{black} $b_1$};
	\node[above right] at (0,0) {\tiny{ $(u_1\xi_r,s_r)$}};
	\node[above right] at (0,1) {\tiny{ $(u_2\xi_r,s_r)$}};
	\node[right] at (2,0.5) {\tiny{$\ \R^*_{u_1/u_2}$}};
}\qquad
=
\qquad
\tikzbase{1}{-3}{
	\draw[unfused]
	(1.5,-0.5) node[right,scale=0.6] {\color{black} $b_3$} -- (-1,-0.5)  -- (-2,0.5) node[above,scale=0.6] {\color{black} $a_3$};
	\draw[unfused] 
	(1.5,0.5) node[right,scale=0.6] {\color{black} $b_2$} -- (-1,0.5)  -- (-2,-0.5) node[below,scale=0.6] {\color{black} $a_2$};
	\draw[fused] 
	(0,-1.5) node[below,scale=0.6] {\color{black} $a_1$} -- (0,0)  -- (0,1.5) node[above,scale=0.6] {\color{black} $b_1$};
	\node[above right] at (0,-0.5) {\tiny{ $(u_2\xi_r,s_r)$}};
	\node[above right] at (0,0.5) {\tiny{ $(u_1\xi_r,s_r)$}};
	\node[right] at (-1.5,0) {\tiny{$\ \R^*_{u_1/u_2}$}};
}
\ee
where all non-tilted vertices have $w^{\s*}$-weights. Stacking $L$ columns horizontally and iterating the Yang-Baxter equation above for $r=1,\dots, L$, we can move the tilted cross $\R^*_{u_1/u_2}$ through $L$ columns to get
\be
\tikzbase{1}{-3}{
	\draw[unfused] 
         (-1.3,1) node[right,scale=0.6] {\color{black} $\dots$}  --  (-3.3,1) node[left,scale=0.6] {\color{black} $0$};
	\draw[unfused] 
         (-1.3,0) node[right,scale=0.6] {\color{black} $\dots$} -- (-3.3,0) node[left,scale=0.6] {\color{black} $0$}; 
	\draw[unfused*] 
	(2.5,0) node[below,scale=0.6] {\color{black} $0$} -- (1.5,1)  -- (-0.8,1);
	\draw[unfused*] 
	(2.5,1) node[above,scale=0.6] {\color{black} $0$} -- (1.5,0)  -- (-0.8,0);
	\draw[fused] 
	(0,-1) node[below,scale=0.6] {\color{black} $i_L$}   -- (0,2) node[above,scale=0.6] {\color{black} $j_L$};
	\draw[fused] 
	(-2.3,-1) node[below,scale=0.6] {\color{black} $i_1$}  -- (-2.3,2) node[above,scale=0.6] {\color{black} $j_1$};
	\node[above right] at (0,0) {\tiny{ $(u_1\xi_L,s_L)$}};
	\node[above right] at (0,1) {\tiny{ $(u_2\xi_L,s_L)$}};
	\node[above right] at (-2.3,0) {\tiny{ $(u_1\xi_1,s_1)$}};
	\node[above right] at (-2.3,1) {\tiny{ $(u_2\xi_1,s_1)$}};
	\node[right] at (2,0.5) {\tiny{$\ \R^*_{u_1/u_2}$}};
}\quad
=
\qquad
\tikzbase{1}{-3}{
	\draw[unfused] 
         (-1.3,1) node[right,scale=0.6] {\color{black} $\dots$}  --  (-3.3,1) -- (-4.3,0) node[below,scale=0.6] {\color{black} $0$};
	\draw[unfused] 
         (-1.3,0) node[right,scale=0.6] {\color{black} $\dots$} -- (-3.3,0)  -- (-4.3, 1) node[above,scale=0.6] {\color{black} $0$}; 
	\draw[unfused*] 
	(1.5,1) node[right,scale=0.6] {\color{black} $0$}  -- (-0.8,1);
	\draw[unfused*] 
	(1.5,0) node[right,scale=0.6] {\color{black} $0$}  -- (-0.8,0);
	\draw[fused] 
	(0,-1) node[below,scale=0.6] {\color{black} $i_L$}   -- (0,2) node[above,scale=0.6] {\color{black} $j_L$};
	\draw[fused] 
	(-2.3,-1) node[below,scale=0.6] {\color{black} $i_1$}  -- (-2.3,2) node[above,scale=0.6] {\color{black} $j_1$};
	\node[above right] at (0,0) {\tiny{ $(u_2\xi_L,s_L)$}};
	\node[above right] at (0,1) {\tiny{ $(u_1\xi_L,s_L)$}};
	\node[above right] at (-2.3,0) {\tiny{ $(u_2\xi_1,s_1)$}};
	\node[above right] at (-2.3,1) {\tiny{ $(u_1\xi_1,s_1)$}};
	\node[right] at (-3.8,0.5) {\tiny{$\ \R^*_{u_1/u_2}$}};
}\ee
Note that in the identity above we have also specified the left and right boundary conditions to be $0$.  In this case the conservation law forces the tilted crosses on both sides to be trivial of the form 
\tikzbase{0.24}{-2}{
	\draw[unfused] (1,1) node[right] {\color{black} \tiny $0$} -- (-1,-1) node[left] {\color{black} \tiny $0$};
	\draw[unfused] (1,-1) node[right] {\color{black} \tiny $0$} -- (-1,1) node[left] {\color{black} \tiny $0$};
}
 and have weight $1$, whence for any partitions $\lambda,\mu$ such that $l(\lambda), l(\mu)<L$ we get
\be
\sum_\nu \langle \mu|T^*_0(u_2\mid\Xi,\S)|\nu\rangle\langle\nu|T^*_0(u_1\mid\Xi,\S)|\lambda\rangle=\sum_\nu \langle \mu|T^*_0(u_1\mid\Xi,\S)|\nu\rangle\langle\nu|T^*_0(u_2\mid\Xi,\S)|\lambda\rangle,
\ee
with sums on both sides being finite. Since $L$ was arbitrary, we can remove the restrictions on $l(\lambda)$ and $l(\mu)$. One finishes the proof by replacing $\Xi,\S,\lambda,\mu,\nu$ by $\widehat{\Xi},\widehat{\S},\widehat{\lambda}^{(N)},\widehat{\mu}^{(N)},\widehat{\nu}^{(N)}$ and applying Proposition \ref{infiniteColumn}.

To prove the first equation we will use a similar argument, but this time the tilted cross will change while we move it through the columns. Again, it suffices to prove
\be
\langle\lambda|\C(\kappa_1\mid\Xi,\S)\C(\kappa_2\mid\tau_\S\Xi,\tau_\Xi\S)|\mu\rangle=\langle\lambda|\C(\kappa_2\mid\Xi,\S)\C(\kappa_1\mid\tau_\S\Xi,\tau_\Xi\S)|\mu\rangle.
\ee 

This time we use the deformed Yang-Baxter equation from Proposition \ref{defWYBprop}. Setting $t_1=\sqrt{\frac{s_r\xi_r}{\kappa_1}}$,  $t_2=\sqrt{\frac{s_r\xi_r}{\kappa_2}}$, $t_3=s_r$, $\eta=\sqrt{s_{r+1}\xi_{r+1}}/\sqrt{s_r\xi_r}$ in \eqref{defWYB}, we obtain
\be
\tikzbase{1}{-3}{
	\draw[fused] 
	(-1,1) node[left,scale=0.6] {\color{black} $a_1$} -- (2,1)  -- (3,0) node[below,scale=0.6] {\color{black} $b_1$};
	\draw[fused] 
	(-1,0)  node[left,scale=0.6] {\color{black} $a_2$} -- (2,0)  -- (3,1) node[above,scale=0.6] {\color{black} $b_2$};
	\draw[fused] 
	(0,-1) node[below,scale=0.6] {\color{black} $a_3$} -- (0,0.5)  -- (0,2) node[above,scale=0.6] {\color{black} $b_3$};
	\node[above right] at (0,1) {\tiny{ $\(\sqrt{\frac{s_r\xi_r}{\kappa_1}},s_r\)$}};
	\node[above right] at (0,0) {\tiny{ $\(\sqrt{\frac{s'_r\xi'_r}{\kappa_2}},s'_r\)$}};
	\node[right] at (2.8,0.5) {\tiny{$\(\sqrt{\frac{s'_r\xi'_r}{\kappa_1}}, \sqrt{\frac{s'_r\xi'_r}{\kappa_2}}\)$}};
}\quad
=\quad
\tikzbase{1}{-3}{
	\draw[fused]
	(-2,0.5) node[above,scale=0.6] {\color{black} $a_1$} -- (-1,-0.5)  -- (3,-0.5) node[right,scale=0.6] {\color{black} $b_1$};
	\draw[fused] 
	(-2,-0.5) node[below,scale=0.6] {\color{black} $a_2$} -- (-1,0.5)  -- (3,0.5) node[right,scale=0.6] {\color{black} $b_2$};
	\draw[fused] 
	(1,-1.5) node[below,scale=0.6] {\color{black} $a_3$} -- (1,0)  -- (1,1.5) node[above,scale=0.6] {\color{black} $b_3$};
	\node[above right] at (1,-0.5) {\tiny{ $\(\sqrt{\frac{s'_r\xi'_r}{\kappa_1}},s'_r\)$}};
	\node[above right] at (1,0.5) {\tiny{ $\(\sqrt{\frac{s_r\xi_r}{\kappa_2}},s_r\)$}};
	\node[right] at (-1.3,0) {\tiny{$\(\sqrt{\frac{s_r\xi_r}{\kappa_1}}, \sqrt{\frac{s_r\xi_r}{\kappa_2}}\)$}};
}
\ee
where all the vertices use $W^\s$-weights with indicated parameters, and we set
\be
\tau_\S\Xi=(\xi_0',\xi_1',\dots),\qquad \tau_\Xi\S=(s_0',s_1',\dots).
\ee
Note that due to the equality $\xi'_rs'_r=\xi_{r+1}s_{r+1}$ the weights of the tilted cross on the left-hand side for $r=i$ coincide with the weights of the tilted cross on the right-hand side for $r=i+1$. This suggests that we can stack columns with $r=1,\dots, L$ horizontally, place a tilted cross with parameters $\(\sqrt{\frac{s_{L+1}\xi_{L+1}}{\kappa_1}}, \sqrt{\frac{s_{L+1}\xi_{L+1}}{\kappa_2}}\)$ on the right, and iterate the equation above, moving the tilted cross and changing the parameters of it to $\(\sqrt{\frac{s_{1}\xi_{1}}{\kappa_1}}, \sqrt{\frac{s_{1}\xi_{1}}{\kappa_2}}\)$. The resulting equation with $0$ boundary conditions on the left and on the right is given below:
\be
\tikzbase{1}{-3}{
	\draw[fused*] 
	(-3,1) node[left,scale=0.6] {\color{black} $0$} -- (-1,1) node[right,scale=0.6] {\color{black} $\dots$};
	\draw[fused*] 
	(-3,0)  node[left,scale=0.6] {\color{black} $0$} -- (-1,0) node[right,scale=0.6] {\color{black} $\dots$};
	\draw[fused] 
	(-0.5,1) -- (2,1)  -- (3,0) node[below,scale=0.6] {\color{black} $0$};
	\draw[fused] 
	(-0.5,0)  -- (2,0)  -- (3,1) node[above,scale=0.6] {\color{black} $0$};
	\draw[fused] 
	(0,-1) node[below,scale=0.6] {\color{black} $i_L$} -- (0,2) node[above,scale=0.6] {\color{black} $j_L$};
	\draw[fused] 
	(-2,-1) node[below,scale=0.6] {\color{black} $i_1$} -- (-2,2) node[above,scale=0.6] {\color{black} $j_1$};
	\node[above right] at (0,1) {\tiny{ $\(\sqrt{\frac{s_L\xi_L}{\kappa_1}},s_L\)$}};
	\node[above right] at (0,0) {\tiny{ $\(\sqrt{\frac{s'_L\xi'_L}{\kappa_2}},s'_L\)$}};
	\node[above right] at (-2,1) {\tiny{ $\(\sqrt{\frac{s_1\xi_1}{\kappa_1}},s_1\)$}};
	\node[above right] at (-2,0) {\tiny{ $\(\sqrt{\frac{s'_1\xi'_1}{\kappa_2}},s'_1\)$}};
}\quad
=\quad
\tikzbase{1}{-3}{
	\draw[fused*]
	(-0.5,0.5) node[above,scale=0.6] {\color{black} $0$} -- (0.5,-0.5)  -- (2,-0.5) node[right,scale=0.6] {\color{black} $\dots$};
	\draw[fused*] 
	(-0.5,-0.5) node[below,scale=0.6] {\color{black} $0$} -- (0.5,0.5)  -- (2,0.5) node[right,scale=0.6] {\color{black} $\dots$};
	\draw[fused]
	(2.5,-0.5)  -- (5,-0.5) node[right,scale=0.6] {\color{black} $0$};
	\draw[fused] 
	(2.5,0.5)  -- (5,0.5) node[right,scale=0.6] {\color{black} $0$};
	\draw[fused] 
	(1,-1.5) node[below,scale=0.6] {\color{black} $i_1$} -- (1,1.5) node[above,scale=0.6] {\color{black} $j_1$};
	\draw[fused] 
	(3,-1.5) node[below,scale=0.6] {\color{black} $i_L$} -- (3,1.5) node[above,scale=0.6] {\color{black} $j_L$};
	\node[above right] at (1,-0.5) {\tiny{ $\(\sqrt{\frac{s'_1\xi'_1}{\kappa_1}},s'_1\)$}};
	\node[above right] at (1,0.5) {\tiny{ $\(\sqrt{\frac{s_1\xi_1}{\kappa_2}},s_1\)$}};
	\node[above right] at (3,-0.5) {\tiny{ $\(\sqrt{\frac{s'_L\xi'_L}{\kappa_1}},s'_L\)$}};
	\node[above right] at (3,0.5) {\tiny{ $\(\sqrt{\frac{s_L\xi_L}{\kappa_2}},s_L\)$}};
}
\ee
Note that on both sides of the equation above the tilted cross must again be trivial with weight $1$, hence for any partitions $\lambda,\mu$ such that $l(\lambda), l(\mu)<L$ we obtain
\be
\langle \mu|\T_0(\kappa_1\mid\Xi,\S)\T_0(\kappa_2\mid\tau_\S\Xi,\tau_\Xi\S)|\lambda\rangle=\langle \mu|\T_0(\kappa_2\mid\Xi,\S)\T_0(\kappa_1\mid\tau_\S\Xi,\tau_\Xi\S)|\lambda\rangle.
\ee 
Since $L$ was arbitrary, we can remove the restrictions on $l(\lambda),l(\mu)$. The proof is then finished by replacing $\Xi,\S,\lambda,\mu$ with $\widehat{\Xi},\widehat{\S},\widehat{\lambda}^{(N)},\widehat{\mu}^{(N)}$, multiplying both sides  by
\be
\frac{(s_0^2;q)_\infty(s_0s_1\xi_1/\xi_0;q)_\infty}{(\kappa_1 s_0/\xi_0;q)_{\infty}(\kappa_2 s_0/\xi_0;q)_{\infty}}\,,
\ee and applying Proposition \ref{infiniteColumn}.
\end{proof}

The next exchange relation involves operators $\C(\kappa\mid\Xi,\S)$ and $\tB^*(u\mid\Xi,\S)$.

\begin{prop}\label{exchange} The following exchange relation holds:
\be
\tB^*(u\mid\Xi,\S)\C(\kappa\mid\tau_\S\Xi,\tau_\Xi\S)=\frac{1-u\kappa}{1-u\xi_1s_1}\C(\kappa\mid\tau_\S\Xi,\tau_\Xi\S)\tB^*(u\mid\tau_\S\Xi,\tau_\Xi\S).
\ee
\end{prop}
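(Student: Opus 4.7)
Following the strategy of Proposition~\ref{commutation}, I will use the deformed Yang--Baxter equation of Proposition~\ref{defCauchyYBprop} as a zipper. Stack two semi-infinite rows: the $w^{\s*}$-row underlying $\tB^*(u\mid\Xi,\S)$ on top (column $j$ carries $(u\xi_j,s_j)$), and the $W^\s$-row underlying $\C(\kappa\mid\tau_\S\Xi,\tau_\Xi\S)$ below it (column $j$ carries $(\sqrt{s_{j+1}\xi_{j+1}/\kappa},s_j^{(1)})$). Setting $t=\sqrt{s_{j+1}\xi_{j+1}/\kappa}$, $s=s_j^{(1)}$, $\eta=s_j/s_j^{(1)}$, $x=u\xi_j^{(1)}$ in \eqref{defCauchyYB} gives $\eta s=s_j$ and $xs=yt=us_{j+1}\xi_{j+1}$, with $y=u\sqrt{s_{j+1}\xi_{j+1}\kappa}$. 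Applying the equation at column $j$ then transports a tilted cross with parameters $(u\sqrt{s_{j+1}\xi_{j+1}\kappa},\sqrt{s_{j+1}\xi_{j+1}/\kappa})$ on the right to one with parameters $(u\sqrt{s_j\xi_j\kappa},\sqrt{s_j\xi_j/\kappa})$ on the left---exactly the values needed for the next YB at column $j-1$---while converting the $w^{\s*}$-vertex at column $j$ to parameters $(u\xi_j^{(1)},s_j^{(1)})$, i.e.\ those of $\tB^*(u\mid\tau_\S\Xi,\tau_\Xi\S)$.

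For fixed partitions $\mu,\lambda$, take $L$ large enough that all vertices past column $L$ are trivial, and insert a tilted cross of weight $1$ (the $(0,0;0,0)$ configuration) on the far right. Iterating the YB at columns $L,L-1,\dots,1$ swaps the order of the two rows and leaves the cross on the left of column $1$ with parameters $(u\sqrt{s_1\xi_1\kappa},\sqrt{s_1\xi_1/\kappa})$. Because the far-left thin edge at the top-row level and the far-left thick edge at the bottom-row level are preserved throughout the zipper, the NW and SW labels of the final cross must equal the original $T^*_i$- and $\T_a$-labels $i$ and $a$ respectively; its NE and SE labels $a',i'$ become the new left-labels of the swapped rows (for $\T_{a'}(\kappa\mid\tau_\S\Xi,\tau_\Xi\S)$ and $T^*_{i'}(u\mid\tau_\S\Xi,\tau_\Xi\S)$) and are summed in the compositions. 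The zipper yields
\begin{equation*}
\langle\mu|T^*_i\T_a|\lambda\rangle=\sum_{a',i'}w^{\s*}_{u\sqrt{s_1\xi_1\kappa};\,\sqrt{s_1\xi_1/\kappa}}(a,i';a',i)\,\langle\mu|\T_{a'}T^*_{i'}|\lambda\rangle.
\end{equation*}

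Now multiply by $(-u\xi_0/s_0)^i(\kappa s_0/\xi_0)^a(\kappa^{-1}s_1\xi_1;q)_a/(q;q)_a$ and sum over $i\in\{0,1\}$, $a\geq 0$: on the LHS this produces $\langle\mu|\tB^*(u\mid\Xi,\S)\C(\kappa\mid\tau_\S\Xi,\tau_\Xi\S)|\lambda\rangle$. Swapping summations on the RHS to isolate each $(a',i')$, the cross conservation $a=a'+i-i'$ leaves exactly two surviving terms; using the explicit values of $w^{\s*}$ from \eqref{dualWeightsTable} (here $su=us_1\xi_1$ and $s^{-1}u=u\kappa$) together with the telescoping identity $u\kappa-us_1\xi_1 q^{a'-1}=u\kappa\,(1-\kappa^{-1}s_1\xi_1 q^{a'-1})$ that cancels the ratio $(\kappa^{-1}s_1\xi_1;q)_{a'}/(\kappa^{-1}s_1\xi_1;q)_{a'-1}$, a direct calculation (treated separately for $i'=0$ and $i'=1$) yields
\begin{equation*}
\sum_{i,a}(-u\xi_0/s_0)^i(\kappa s_0/\xi_0)^a\frac{(\kappa^{-1}s_1\xi_1;q)_a}{(q;q)_a}\,w^{\s*}(a,i';a',i)=\frac{1-u\kappa}{1-us_1\xi_1}(-u\xi_0/s_0)^{i'}(\kappa s_0/\xi_0)^{a'}\frac{(\kappa^{-1}s_1\xi_1;q)_{a'}}{(q;q)_{a'}}.
\end{equation*}
Since $\xi_0^{(1)}/s_0^{(1)}=\xi_0/s_0$ and $\xi_0^{(1)}s_0^{(1)}=s_1\xi_1$, the factor multiplying $(1-u\kappa)/(1-us_1\xi_1)$ is precisely the $(a',i')$-coefficient in the expansion of $\C(\kappa\mid\tau_\S\Xi,\tau_\Xi\S)\tB^*(u\mid\tau_\S\Xi,\tau_\Xi\S)$ into $\T_{a'}T^*_{i'}$, proving the stated exchange relation. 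The main obstacle is this final $q$-Pochhammer identity: it is elementary but requires careful bookkeeping of two cases, and the scalar $(1-u\kappa)/(1-u\xi_1 s_1)$ emerges only through the delicate telescoping cancellation between the two surviving contributions for each $(a',i')$.
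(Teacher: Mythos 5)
Your proof is correct, and it shares its engine with the paper's argument: the same deformed Yang--Baxter equation of Proposition \ref{defCauchyYBprop}, with the same parameter choices (your $\eta=s_j/s_j^{(1)}$ equals the paper's $\sqrt{\xi_js_j/(\xi_{j+1}s_{j+1})}$), and the same zipper dragging the tilted cross from the trivial all-zero configuration on the far right to the far left while swapping the two rows. Where you genuinely diverge is in how the leftover cross is handled. The paper specializes the left boundary to the single pair $(1,0)$, so conservation forces the cross into one configuration of weight $(1-u\kappa)/(1-u\xi_1s_1)$; this only gives the relation between $T_1^*\T_0$ and $\T_0T_1^*$, and the full operators $\tB^*$ and $\C$ are then recovered by prepending an infinite $0$th column and taking $N\to\infty$ via Proposition \ref{infiniteColumn}, exactly as in Proposition \ref{commutation}. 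You instead keep the left labels $(i,a)$ general, retain the cross as an honest weight $w^{\s*}_{u\sqrt{\kappa s_1\xi_1};\,\sqrt{s_1\xi_1/\kappa}}(a,i';a',i)$ (your index placement is consistent with the $45^\circ$ rotation used in \eqref{defCauchyYB}), and resum against the coefficients in \eqref{defC} and \eqref{deftB}. I verified your final identity: for $i'=0$ the contributions $(a,i)=(a',0)$ and $(a'+1,1)$ combine as $(1-us_1\xi_1q^{a'})-u\kappa(1-\kappa^{-1}s_1\xi_1q^{a'})=1-u\kappa$, and for $i'=1$ the contributions $(a'-1,0)$ and $(a',1)$ give $(1-q^{a'})+(q^{a'}-u\kappa)=1-u\kappa$, with the first term absent when $a'=0$; the prefactors $(-u\xi_0/s_0)^{i'}(\kappa s_0/\xi_0)^{a'}(\kappa^{-1}s_1\xi_1;q)_{a'}/(q;q)_{a'}$ reassemble correctly because $\xi_0^{(1)}/s_0^{(1)}=\xi_0/s_0$ and $\xi_0^{(1)}s_0^{(1)}=\xi_1s_1$. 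So your route trades the paper's limiting argument for an explicit two-case $q$-Pochhammer computation; it is somewhat more self-contained (no appeal to Proposition \ref{infiniteColumn}), while the paper's version is computation-free once the boundary is specialized. Both constitute complete proofs of Proposition \ref{exchange}.
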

\begin{proof}
The proof is similar to the proof of Proposition \ref{commutation}: we will take a suitable Yang-Baxter equation, iterate it for several columns, choose certain specific boundary conditions, and use Proposition \ref{infiniteColumn}. For the duration of the proof we again set
\be
\tau_\S\Xi=(\xi_0',\xi_1',\dots),\qquad \tau_\Xi\S=(s_0',s_1',\dots).
\ee

We start with the deformed Yang-Baxter equation from Proposition \ref{defCauchyYBprop}, where we set 
\be
s=s'_r=\sqrt{s_rs_{r+1}\xi_{r+1}/\xi_r}, \qquad t=\sqrt{\frac{s_{r+1}\xi_{r+1}}{\kappa}},
\ee 
\be
x=u\xi_r'=u\sqrt{\xi_r\xi_{r+1}s_{r+1}/s_r},\quad  y=u\sqrt{\kappa\xi_{r+1}s_{r+1}},\quad \eta=\sqrt{\frac{\xi_{r}s_{r}}{\xi_{r+1}s_{r+1}}}
\ee
to get
\begin{equation*}
\tikzbase{1}{-3}{
	\draw[unfused] 
	(3,0) node[below,scale=0.6] {\color{black} $b_1$} -- (2,1)  -- (-1,1) node[left,scale=0.6] {\color{black} $a_1$};
	\draw[fused] 
	(-1,0) node[left,scale=0.6] {\color{black} $a_2$} -- (2,0) -- (3,1) node[above,scale=0.6] {\color{black} $b_2$};
	\draw[fused] 
	(0,-1) node[below,scale=0.6] {\color{black} $a_3$} -- (0,2) node[above,scale=0.6] {\color{black} $b_3$};
	\node[above right] at (0,0) {\tiny{ $\(\sqrt{\frac{s_r'\xi_r'}{\kappa}},s_r'\)$}};
	\node[above right] at (0,1) {\tiny{ $(u\xi_r; s_r)$}};
	\node[right] at (2.7,0.5) {\tiny {$\(u\sqrt{\kappa s_r'\xi_r'}; \sqrt{\frac{s_r'\xi_r'}{\kappa}}\)$}};
}\quad
=\quad
\tikzbase{1}{-3}{
	\draw[unfused]
	(2.9,-0.5) node[right,scale=0.6] {\color{black} $b_1$} -- (-1.1,-0.5) -- (-2.1,0.5) node[above,scale=0.6] {\color{black} $a_1$};
	\draw[fused] 
	(-2.1,-0.5) node[below,scale=0.6] {\color{black} $a_2$} -- (-1.1,0.5) -- (2.9,0.5) node[right,scale=0.6] {\color{black} $b_2$};
	\draw[fused] 
	(1,-1.5) node[below,scale=0.6] {\color{black} $a_3$} -- (1,1.5) node[above,scale=0.6] {\color{black} $b_3$};
	\node[above right] at (1,-0.5) {\tiny{ $(u\xi'_r; s_r')$}};
	\node[above right] at (1,0.6) {\tiny{ $\(\sqrt{\frac{s_r'\xi_r'}{\kappa}},s_r'\)$}};
	\node[right] at (-1.5,0) {\tiny{$\(u\sqrt{\kappa s_r\xi_r}; \sqrt{\frac{s_r\xi_r}{\kappa}}\)$}};
}
\end{equation*}
where 
\tikzbase{0.3}{-2}{
	\draw[unfused] (-1,0) -- (1,0);
	\draw[unfused] (0,-1) -- (0,1);
}
 vertices have $W^\s$-weights, and the remaining vertices have $w^{\s*}$-weights. 

Note that the tilted cross in the left-hand side for $r=i$ coincides with the tilted cross in the right-hand side for $r=i+1$, so we can stack several columns horizontally and iterate the Yang-Baxter equation to get
\begin{equation}
\label{BCZipper}
\tikzbase{1}{-3}{
	\draw[unfused] 
	(-1,1) node[right,scale=0.6] {\color{black} $\dots$} -- (-3,1) node[left,scale=0.6] {\color{black} $1$};
	\draw[fused*] 
	(-3,0)  node[left,scale=0.6] {\color{black} $0$} -- (-1,0) node[right,scale=0.6] {\color{black} $\dots$};
	\draw[unfused*] 
	(3,0) node[below,scale=0.6] {\color{black} $0$} -- (2,1)  -- (-0.5,1);
	\draw[fused] 
	(-0.5,0)  -- (2,0)  -- (3,1) node[above,scale=0.6] {\color{black} $0$};
	\draw[fused] 
	(0,-1) node[below,scale=0.6] {\color{black} $i_L$} -- (0,2) node[above,scale=0.6] {\color{black} $j_L$};
	\draw[fused] 
	(-2,-1) node[below,scale=0.6] {\color{black} $i_1$} -- (-2,2) node[above,scale=0.6] {\color{black} $j_1$};
	\node[above right] at (0,1) {\tiny{ $\(u\xi_L; s_L\)$}};
	\node[above right] at (0,0) {\tiny{ $\(\sqrt{\frac{s'_L\xi'_L}{\kappa}},s'_L\)$}};
	\node[above right] at (-2,1) {\tiny{ $\(u\xi_1;s_1\)$}};
	\node[above right] at (-2,0) {\tiny{ $\(\sqrt{\frac{s'_1\xi'_1}{\kappa}},s'_1\)$}};
	\node[] at (0,0) {$\bigstar$};
}\quad
=\quad
\tikzbase{1}{-3}{
	\draw[unfused]
	(2,-0.5) node[right,scale=0.6] {\color{black} $\dots$} -- (0.5,-0.5)  -- (-0.5,0.5) node[above,scale=0.6] {\color{black} $1$};
	\draw[fused*] 
	(-0.5,-0.5) node[below,scale=0.6] {\color{black} $0$} -- (0.5,0.5)  -- (2,0.5) node[right,scale=0.6] {\color{black} $\dots$};
	\draw[unfused*]
	(5,-0.5) node[right,scale=0.6] {\color{black} $0$} -- (2.5,-0.5);
	\draw[fused] 
	(2.5,0.5)  -- (5,0.5) node[right,scale=0.6] {\color{black} $0$};
	\draw[fused] 
	(1,-1.5) node[below,scale=0.6] {\color{black} $i_1$} -- (1,1.5) node[above,scale=0.6] {\color{black} $j_1$};
	\draw[fused] 
	(3,-1.5) node[below,scale=0.6] {\color{black} $i_L$} -- (3,1.5) node[above,scale=0.6] {\color{black} $j_L$};
	\node[above right] at (1,-0.5) {\tiny{ $\(u\xi'_1;s'_1\)$}};
	\node[above right] at (1,0.5) {\tiny{ $\(\sqrt{\frac{s_1\xi_1}{\kappa}},s_1\)$}};
	\node[above right] at (3,-0.5) {\tiny{ $\(u\xi'_L;s'_L\)$}};
	\node[above right] at (3,0.5) {\tiny{ $\(\sqrt{\frac{s_L\xi_L}{\kappa}},s_L\)$}};
}
\end{equation}
Here the parameters of the tilted cross on the left-hand side are $\(u\sqrt{\kappa s_{L+1}\xi_{L+1}}, \sqrt{\frac{s_{L+1}\xi_{L+1}}{\kappa}}\)$, and on the right-hand side they are $\(u\sqrt{\kappa s_1\xi_1}, \sqrt{\frac{s_1\xi_1}{\kappa}}\)$.

Let $\lambda,\mu$ be partitions with $l(\lambda),l(\mu)<L$, and set $i_r=m_r(\mu'), j_r=m_r(\lambda')$. In particular, $i_L=0$, which by \eqref{Wredef} forces the $\star$-vertex on the left-hand side of \eqref{BCZipper} to have configuration \tikzbase{0.24}{-2}{
	\draw[unfused] (-1,0) node[left] {\color{black} \tiny $g$} -- (1,0) node[right] {\color{black} \tiny $0$} -- (1.2,0);
	\draw[unfused] (0,-1) node[below] {\color{black} \tiny $0$} -- (0,1) node[above] {\color{black} \tiny $g$} -- (0, 1.2);
}. Hence, the tilted cross on the left-hand side must be trivial with the weight $1$. On the other hand, the only possible configuration for the tilted cross on the right-hand side of \eqref{BCZipper} is 
\tikzbase{0.24}{-2}{
	\draw[unfused] (-1,-1) -- (-0.7,-0.7) node[below left] {\color{black} \tiny $0$} -- (0.7,0.7) node[above right] {\color{black} \tiny $0$} -- (1,1);
	\draw[unfused] (1,-1) -- (0.7,-0.7) node[below right] {\color{black} \tiny $1$} -- (-0.7,0.7) node[above left] {\color{black} \tiny $1$} -- (-1,1);
} with weight $\dfrac{1-u\kappa}{1-u\xi_1s_1}$. Hence, we get
\be
\langle \lambda| T_1^*(u\mid\Xi,\S)\T_0(\kappa\mid\tau_\S\Xi,\tau_\Xi\S)|\mu\rangle=\frac{1-u\kappa}{1-u\xi_1s_1}\langle \lambda| \T_0(\kappa\mid\tau_\S\Xi,\tau_\Xi\S)T_1^*(u\mid\tau_\S\Xi,\tau_\Xi\S)|\mu\rangle.
\ee
We complete the proof by the following application of Proposition \ref{infiniteColumn}:
\begin{multline*}
\langle \lambda| \tB^*(u\mid\Xi,\S)\C(\kappa\mid\tau_\S\Xi,\tau_\Xi\S)|\mu\rangle\\
=(1-u\xi_0s_0)\frac{({s'_0}^2;q)_\infty}{(\kappa s'_0/\xi'_0;q)_\infty}\lim_{N\to\infty}\langle \widehat{\lambda}^{(N-1)}| T_1^*(u\mid\widehat{\Xi},\widehat{\S})\T_0(\kappa\mid\tau_{\widehat{\S}}\widehat{\Xi},\tau_{\widehat{\Xi}}\widehat{\S})|\widehat{\mu}^{(N)}\rangle\\
=(1-u\xi_0s_0)\frac{1-u\kappa}{1-u{\xi}_0{s}_0}\frac{({s'_0}^2;q)_\infty}{(\kappa s'_0/\xi'_0;q)_\infty}\lim_{N\to\infty}\langle \widehat{\lambda}^{(N-1)}|\T_0(\kappa\mid\tau_{\widehat{\S}}\widehat{\Xi},\tau_{\widehat{\Xi}}\widehat{\S})T_1^*(u\mid\tau_{\widehat {\S}}\widehat{\Xi},\tau_{\widehat{\Xi}}\widehat{\S})|\widehat{\mu}^{(N)}\rangle\\
=\frac{1-u\kappa}{1-u\xi_1s_1}\langle\lambda|\C(\kappa\mid\tau_\S\Xi,\tau_\Xi\S)\tB^*(u\mid\tau_\S\Xi,\tau_\Xi\S)|\mu\rangle,
\end{multline*}
where we have used the relation $\xi'_0s'_0=\xi_1s_1$.
\end{proof}

\section{Inhomogeneous spin $q$-Whittaker functions}\label{sqWhitSect}

In this section for any pair of partitions $\lambda,\mu$ we define a function ${\F_{\lambda/\mu}(\kappa_1,\dots, \kappa_n\mid\Xi,\S)}$ in variables $\kappa_1,\dots, \kappa_n$ as a partition function of a certain vertex model with vertices having $W$-weights.  These functions are symmetric polynomials in $\kappa_1,\dots, \kappa_n$, and they depend on inhomogeneity parameters $\Xi=(\xi_0, \xi_1, \xi_2,\dots)$ and $\S=(s_0, s_1,s_2,\dots)$. We also prove several algebraic properties of these functions using the exchange relations from the previous section.

\subsection{Vertex model construction.}\label{FconstSect} For a pair of partitions $\lambda,\mu$ with $l(\lambda),l(\mu)<L$ consider a grid of vertices consisting of $n$ rows and $L$ columns. For convenience, we number the rows from top to bottom, while the columns are numbered from the left to the right, starting from $1$ in both cases. To a vertex at the intersection of the $i$th row and the $j$th column we assign the weight $W^\s_{t,s}$ with parameters $t=\sqrt{\frac{\xi_{i+j}s_{i+j}}{\kappa_i}}$ and $s=\sqrt{s_{i+j}\xi_{i+j}s_j/\xi_j}$. Define boundary conditions as follows:
\begin{itemize}
\item the labels of all right edges are equal to $0$;
\item the left incoming edge in the $i$th row has label $a_i$;
\item the bottom edge in the $j$th column has label $m_j(\mu')=\mu_j-\mu_{j+1}$;
\item the top edge in the $j$th column has label $m_j(\lambda')=\lambda_j-\lambda_{j+1}$.
\end{itemize} 
The resulting partition function $\ZW^{(a_1,\dots, a_n)}_{\lambda/\mu}(\kappa_1,\dots, \kappa_n\mid\Xi,\S)$ is depicted in Figure \ref{partitionFunction}, where we use the notation $s^{(i)}_j:=\sqrt{s_{i+j}\xi_{i+j}s_j/\xi_j}$.

Similarly to the row partition functions from the previous section, the conservation law forces all labels to the right of the $l(\lambda)$th column in the partition function $\ZW^{(a_1,\dots, a_n)}_{\lambda/\mu}(\kappa_1,\dots, \kappa_n\mid\Xi,\S)$ to be $0$, with all vertices to the right of the $l(\lambda)$th column having weight $1$. Hence, the partition function does not actually depend on $L$ and, taking $L\to\infty$, we can instead consider a semi-infinite grid, with $n$ rows and an infinite number of columns. 

Using the partition function $\ZW^{(a_1,\dots, a_n)}_{\lambda/\mu}(\kappa_1,\dots, \kappa_n\mid\Xi,\S)$ we set 
\begin{equation}
\label{defPartitionF}
\F^{\s}_{\lambda/\mu}(\kappa_1,\dots, \kappa_n\mid\Xi,\S):=\sum_{a_1,\dots, a_n}\(\prod_{i=1}^n \({\kappa_is_0/\xi_0}\)^{a_i}\frac{(\kappa_i^{-1} s_i \xi_i;q)_{a_i}}{(q;q)_{a_i}}\)\ZW_{\lambda/\mu}^{(a_1,\dots, a_n)}(\kappa_1,\dots,\kappa_n\mid\Xi,\S),
\end{equation}
where the sum is over $(a_1,\dots, a_n)\in\Z_{\geq 0}^n$ such that $\sum_{r=1}^na_r=\lambda_1-\mu_1$. Renormalizing, we define \emph{inhomogeneous spin $q$-Whittaker} functions by
\begin{equation}
\label{Frenorm}
\F_{\lambda/\mu}(\kappa_1,\dots, \kappa_n\mid\Xi,\S):=\frac{(-\tau_\Xi^n\S)^{\mu}}{(-\S)^{\lambda}}\frac{\cc_{\tau^n_{\Xi}\S}(\mu)}{\cc_\S(\lambda)}\F^{\s}_{\lambda/\mu}(\kappa_1,\dots, \kappa_n\mid\Xi,\S),
\end{equation}
where
\be
\cc_\S(\lambda):=\prod_{i\geq1}\frac{(s_i^2;q)_{\lambda_i-\lambda_{i+1}}}{(q;q)_{\lambda_i-\lambda_{i+1}}},\qquad (-\S)^{\lambda}:=\prod_{i\geq 1}(-s_{i-1})^{\lambda_i}.
\ee
In the particular case $\mu=\varnothing$ we also use the following notation\footnote{Throughout the text we freely use $\lambda$ instead of $\lambda/\varnothing$ in similar contexts; this should not cause any confusion.}
\be
\F_{\lambda}(\kappa_1,\dots, \kappa_n\mid\Xi,\S):=\F_{\lambda/\varnothing}(\kappa_1,\dots, \kappa_n\mid\Xi,\S).
\ee

\begin{figure}
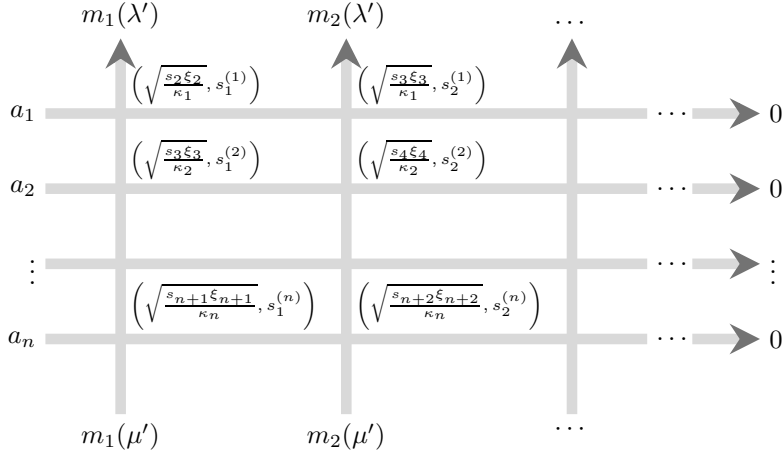

\tikz{1}{
	\foreach\y in {2,...,5}{
		\draw[fused*] (1,\y) -- (9,\y);
		\node[right] at (9,\y) {$\dots$};
		\draw[fused] (9.6,\y) -- (10.5,\y);
	}
	\foreach\x in {1,...,3}{
		\draw[fused] (3*\x-1,1) -- (3*\x-1,6);
	}
	\node[above right] at (2,5) {\tiny $\(\sqrt{\frac{s_2\xi_2}{\kappa_1}}, s^{(1)}_1\)$};
	\node[above right] at (5,5) {\tiny $\(\sqrt{\frac{s_3\xi_3}{\kappa_1}}, s^{(1)}_2\)$};
	
	\node[above right] at (2,4) {\tiny $\(\sqrt{\frac{s_3\xi_3}{\kappa_2}}, s^{(2)}_1\)$};
	\node[above right] at (5,4) {\tiny $\(\sqrt{\frac{s_4\xi_4}{\kappa_2}}, s^{(2)}_2\)$};
	
	\node[above right] at (2,2) {\tiny $\(\sqrt{\frac{s_{n+1}\xi_{n+1}}{\kappa_n}}, s^{(n)}_1\)$};
	\node[above right] at (5,2) {\tiny $\(\sqrt{\frac{s_{n+2}\xi_{n+2}}{\kappa_n}}, s^{(n)}_2\)$};
	\node[above] at (8,6) {$\cdots$};
	\node[above] at (5,6) {$m_2(\lambda')$};
	\node[above] at (2,6) {$m_1(\lambda')$};
	\node[left] at (1,2) {$a_n$};
	\node[left] at (1,3) {$\vdots$};
	\node[left] at (1,4) {$a_2$};
	\node[left] at (1,5) {$a_1$};
	\node[below] at (8,1) {$\cdots$};
	\node[below] at (5,1) {$m_2(\mu')$};
	\node[below] at (2,1) {$m_1(\mu')$};
	\node[right] at (10.5,2) {$0$};
	\node[right] at (10.5,3) {$\vdots$};
	\node[right] at (10.5,4) {$0$};
	\node[right] at (10.5,5) {$0$};
}
\caption{\label{partitionFunction} Partition function $\ZW^{(a_1,\dots, a_n)}_{\lambda/\mu}(\kappa_1,\dots, \kappa_n\mid\Xi,\S)$.}
\end{figure}

\begin{rem} 
When $\Xi=\xi^\infty:=(\xi,\xi,\dots)$ and $\S=s^\infty:=(s,s,\dots)$, the functions $\F_{\lambda/\mu}(\kappa_1,\dots,\kappa_n\mid\Xi, \S)$ turn into the spin $q$-Whittaker functions introduced in \cite{BW17}. More precisely, if we let $\F^{BW}_{\lambda/\mu}(x_1,\dots, x_n)$ denote the functions from \cite{BW17}, then
\be
\F^{BW}_{\lambda/\mu}(x_1,\dots, x_n)=\F_{\lambda/\mu}(-\xi x_1,\dots, -\xi x_n\mid\xi^\infty, s^\infty).
\ee
\end{rem}

 \begin{rem} The letter $\s$ in the notation $\F^\s_{\lambda/\mu}(\kappa_1,\dots, \kappa_n\mid\Xi,\S)$ stands for ``stochastic"; it is explained by the following identity:
\be
\prod_{i=1}^n\frac{(\kappa_is_0\xi_0^{-1};q)_{\infty}}{(s_i\xi_is_0/\xi_0;q)_\infty}\sum_{\lambda}\F^{\s}_\lambda(\kappa_1,\dots, \kappa_n\mid\Xi,\S)=1,
\ee
This easily follows from Remark \ref{stochRem} and the $q$-binomial theorem:
\be
\sum_{a\geq 0}x^a\frac{(y;q)_a}{(q;q)_a}=\frac{(xy;q)_\infty}{(x;q)_{\infty}}.
\ee
\end{rem}

\subsection{Inhomogeneous stable spin Hall-Littlewood functions.}\label{HLsect} It is also useful to consider another family of functions that straightforwardly generalizes both inhomogeneous spin Hall-Littlewood functions from \cite{BP16b} and stable Hall-Littlewood functions from \cite{GdGW16} and \cite{BW17}.

Let $u_1,\dots,u_n$ be a collection of variables. Similarly to the construction of the functions $\F_{\lambda/\mu}$ above, consider a grid of vertices consisting of $n$ rows and $L$ columns, with the vertex at the intersection of the $i$th row and the $j$th column having the $w^\s$-weight with parameters $(u_i\xi_j; s_j)$. For a pair of partitions $\lambda,\mu$ with $l(\lambda),l(\mu)<L$ define boundary conditions in the same way as before:
\begin{itemize}
\item the labels of all right edges are equal to $0$;
\item the left incoming edge in the $i$th row has label $a_i$ for some  $a_i\in\{0,1\}$;
\item the bottom edge in the $j$th column has label $m_j(\mu')=\mu_j-\mu_{j+1}$;
\item the top edge in the $j$th column has label $m_j(\lambda')=\lambda_j-\lambda_{j+1}$.
\end{itemize} 
The resulting partition function is denoted by $\Zw^{(a_1,\dots, a_n)}_{\lambda/\mu}(u_1,\dots, u_n\mid\Xi,\S)$, and its depiction is given in Figure \ref{partitionFunctionw}.

\begin{figure}
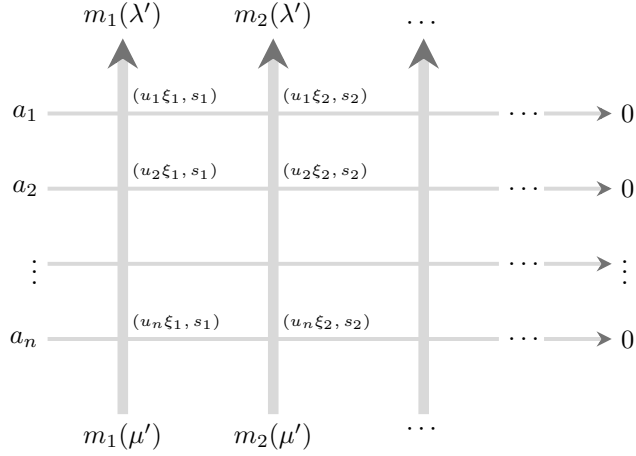

\tikz{1}{
	\foreach\y in {2,...,5}{
		\draw[unfused*] (1,\y) -- (7,\y);
		\node[right] at (7,\y) {$\dots$};
		\draw[unfused] (7.6,\y) -- (8.5,\y);
	}
	\foreach\x in {1,...,3}{
		\draw[fused] (2*\x,1) -- (2*\x,6);
	}
	\node[above right] at (2,5) {\tiny $\(u_1\xi_1, s_1\)$};
	\node[above right] at (4,5) {\tiny $\(u_1\xi_2, s_2\)$};
	
	\node[above right] at (2,4) {\tiny $\(u_2\xi_1, s_1\)$};
	\node[above right] at (4,4) {\tiny $\(u_2\xi_2, s_2\)$};
	
	\node[above right] at (2,2) {\tiny $\(u_n\xi_1, s_1\)$};
	\node[above right] at (4,2) {\tiny $\(u_n\xi_2, s_2\)$};
	\node[above] at (6,6) {$\cdots$};
	\node[above] at (4,6) {$m_2(\lambda')$};
	\node[above] at (2,6) {$m_1(\lambda')$};
	\node[left] at (1,2) {$a_n$};
	\node[left] at (1,3) {$\vdots$};
	\node[left] at (1,4) {$a_2$};
	\node[left] at (1,5) {$a_1$};
	\node[below] at (6,1) {$\cdots$};
	\node[below] at (4,1) {$m_2(\mu')$};
	\node[below] at (2,1) {$m_1(\mu')$};
	\node[right] at (8.5,2) {$0$};
	\node[right] at (8.5,3) {$\vdots$};
	\node[right] at (8.5,4) {$0$};
	\node[right] at (8.5,5) {$0$};
}
\caption{\label{partitionFunctionw} Partition function $\Zw^{(a_1,\dots, a_n)}_{\lambda/\mu}(\kappa_1,\dots, \kappa_n\mid\Xi,\S)$.}
\end{figure}

The \emph{inhomogeneous stable spin Hall-Littlewood functions} $\FF_{\lambda/\mu}(u_1,\dots, u_n\mid\Xi,\S)$ are defined by
\begin{equation}
\label{defPartitionFF}
\FF^{\s}_{\lambda'/\mu'}(u_1,\dots, u_n\mid\Xi,\S):=\sum_{a_1,\dots, a_n\in\{0,1\}}\prod_{i=1}^n(-u\xi_0s_0)^{a_i}\Zw_{\lambda/\mu}^{(a_1,\dots, a_n)}(u_1,\dots,u_n\mid\Xi,\S),
\end{equation}
\be
\FF_{\lambda'/\mu'}(u_1,\dots, u_n\mid\Xi,\S):=\frac{(-\S)^{\mu}}{(-\S)^\lambda}\frac{\cc_\S(\mu)}{\cc_\S(\lambda)}\,\FF^{\s}_{\lambda'/\mu'}(u_1,\dots, u_n\mid\Xi,\S).
\ee
Note that we use conjugated partitions $\lambda'$ and $\mu'$: this is needed to obtain usual Hall-Littlewood polynomials as a degeneration of $\FF_{\lambda/\mu}$, see \cite{BW17}. Also note that the parameter $s_0$ is actually artificial: both $\FF^\s_{\lambda'/\mu'}(u_1,\dots, u_n\mid\Xi,\S)$ and $(-\S)^{\lambda}/(-\S)^\mu$ have factor $s_0^{\lambda_1-\mu_1}$ with no other dependence on $s_0$, thus $\FF_{\lambda'/\mu'}(u_1,\dots, u_n\mid\Xi,\S)$ does not depend on $s_0$ at all.

For $\Xi=1^\infty$ and $\S=s^{\infty}$ the functions $\FF_{\lambda/\mu}(u_1,\dots, u_n\mid\Xi,\S)$ degenerate to the stable spin Hall-Littlewood functions used  in \cite{BW17} and originating from \cite{GdGW16}. Actually, the stability property holds for the inhomogeneous analogue as well: we have
\be
\FF_{\lambda/\mu}(u_1,\dots, u_{n-1}, 0\mid\Xi,\S)=\FF_{\lambda/\mu}(u_1,\dots, u_{n-1}\mid\Xi,\S),
\ee
which can be readily verified using the expression for the partition function $\Zw^{(a_1,\dots, a_n)}_{\lambda/\mu}(\kappa_1,\dots, \kappa_n\mid\Xi,\S)$ and the explicit values of weights $w^{\s}_{u;s}$ for $u=0$. 

At the same time, the  functions $\FF_{\lambda/\mu}(u_1,\dots, u_n\mid\Xi,\S)$ are closely related to the inhomogeneous spin Hall-Littlewood functions ${\sf F}$ defined in \cite[Definition 4.4]{BP16b}. Actually, we already have those functions in our setting:  one has
\be
\frac{(-{\S})^{\mu}}{(-{\S})^\lambda}\frac{\cc_{\widehat{\S}}(\widehat{\mu}^{(N)})}{\cc_{\widehat{\S}}(\widehat{\lambda}^{(N+n)})}\ \Zw^{(1,1,\dots,1)}_{\widehat{\lambda}^{(N+n)}/\widehat{\mu}^{(N)}}(u_1,\dots, u_n\mid\widehat{\Xi},\widehat{\S})={\sf F}_{\lambda'/\mu'}(u_1,\dots, u_n\mid\Xi,\S),
\ee
since both sides are defined using the same partition function, up to a renormalization of the weights mentioned in Remark \ref{renorm} and resulting in the prefactor on the left-hand side. Here $N\in\Z_{\geq 1}$ is a sufficiently large integer, which implicitly enters the right-hand side via the index $\lambda'/\mu'$ of the function ${\sf F}$: In \cite{BP16b} partitions are replaced by signatures, that is, nonincreasing integer sequences, and in the right-hand side we treat  $\lambda'$ and $\mu'$ as signatures of lengths $N+n$ and $N$, respectively, by adding an appropriate number of $0$'s at the end. We also use the notation from Proposition \ref{infiniteColumn} to shift the sequences of parameters $\Xi$ and $\S$ to the right, so that the partition function from Figure \ref{partitionFunctionw} starts with the column having parameters $(u_i\xi_0, s_0)$. 

Moreover, it turns out that one can interpret the functions $\FF_{\lambda/\mu}(u_1,\dots, u_n\mid\Xi,\S)$ as a special case of the inhomogeneous spin Hall-Littlewood functions ${\sf F}_{\lambda/\mu}$. To see it, note that
\be
\restr{\((s_0)^{-l}\ \tikz{0.7}{
	\draw[unfused] (-1,0) -- (1.6,0);
	\draw[fused] (0,-1) -- (0,1);
	\node[left] at (-1,0) {\tiny $1$};\node[right] at (1.6,0) {\tiny $l$};
	\node[below] at (0,-1) {\tiny $i$};\node[above] at (0,1) {\tiny $k$};
	\node[above right] at (0,0) {\tiny ${(u\xi_0,s_0)}$};
}\)}{s_0=0}=\restr{(-u\xi_0s_0)^l}{s_0=1}.
\ee
Applying this to the 0th column of the partition function $\Zw^{(1,1,\dots,1)}_{\widehat{\lambda}^{(N+n)}/\widehat{\mu}^{(N)}}(u_1,\dots, u_n\mid\widehat{\Xi},\widehat{\S})$, one can rewrite \eqref{defPartitionFF} as
\be
\restr{\(s_0^{\mu_1-\lambda_1}\Zw^{(1,1,\dots,1)}_{\widehat{\lambda}(N+n)/\widehat{\mu}(N)}(u_1,\dots, u_n\mid\widehat{\Xi},\widehat{\S})\)}{s_0=0}=\restr{\FF^{\s}_{\lambda'/\mu'}(u_1,\dots, u_n\mid\Xi,\S)}{s_0=1}
\ee
or, using the relation
\be
\frac{(q;q)_{N-\mu_1}}{(q;q)_{N+n-\lambda_1}}\restr{\(s_0^{\lambda_1-\mu_1}\frac{(-{\S})^{\mu}}{(-{\S})^\lambda}\frac{\cc_{\widehat{\S}}(\widehat{\mu}^{(N)})}{\cc_{\widehat{\S}}(\widehat{\lambda}^{(N+n)})}\)}{s_0=0}=\restr{\(\frac{(-\S)^{\mu}}{(-\S)^\lambda}\frac{\cc_\S(\mu)}{\cc_\S(\lambda)}\)}{s_0=1},
\ee
one can renormalize everything to get
\be
\frac{(q;q)_{N-\mu_1}}{(q;q)_{N+n-\lambda_1}}\,\restr{{\sf F}_{\lambda'/\mu'}(u_1,\dots, u_n\mid\Xi,\S)}{s_0=0}=\FF_{\lambda'/\mu'}(u_1,\dots, u_n\mid\Xi,\S),
\ee
where in the left-hand side we again implicitly treat $\lambda'$ and $\mu'$ as signatures of lengths $N+n$ and $N$ respectively. 

The observation above allows us to immediately translate certain results from \cite{BP16b} to our setting. For example, we immediately see that the functions $\FF_{\lambda}(u_1,\dots, u_n\mid\Xi,\S)$ are symmetric in $u_1,\dots,u_n$ since the same result holds for the functions ${\sf F}_{\lambda/\mu}(u_1,\dots, u_n\mid\Xi,\S)$, see \cite[Proposition 4.5]{BP16b}. We also have an explicit formula in the case $\mu=\varnothing$, which is similar to \cite[Theorem 4.14]{BP16b} and \cite[Remark 2]{GdGW16}:
\be
\FF_{\lambda}(u_1,\dots, u_n\mid\Xi,\S)=\frac{1}{(q;q)_{n-l(\lambda)}}\sum_{\sigma\in S_n}\sigma\(\prod_{1\leq\alpha<\beta\leq n}\frac{u_\alpha-qu_\beta}{u_\alpha-u_\beta}\prod_{i=1}^n\widetilde{\varphi}_{\lambda_i}(u_i\mid\Xi,\S)\),
\ee
where the sum is over permutations of $n$ elements that act on functions in $u_1,\dots, u_n$ by permuting variables, and for $k\geq 1$ we set
\be
\widetilde{\varphi}_0(u\mid\Xi,\S)=1-q, \qquad \widetilde{\varphi}_k(u\mid\Xi,\S)=\frac{\xi_0u(1-q)}{1-s_k\xi_ku}\prod_{j=1}^{k-1}\frac{\xi_ju-s_j}{1-s_j\xi_ju}.
\ee

\subsection{Row operator representations}\label{rowoprep} To prove algebraic properties of the functions $\F_{\lambda/\mu}$, instead of using the whole defining partition functions, it is more convenient to construct them row by row using the row operators introduced earlier. 

\begin{prop} For any partitions $\lambda,\mu$ we have
\label{Foperator}
\begin{multline*}
\F^\s_{\lambda/\mu}(\kappa_1,\dots,\kappa_n\mid\Xi,\S)=\langle\lambda|\C(\kappa_1\mid\tau_{\S}\Xi,\tau_{\Xi}\S)\C(\kappa_2\mid\tau^2_{\S}\Xi,\tau^2_{\Xi}\S)\dots\C(\kappa_n\mid\tau^n_{\S}\Xi,\tau^n_{\Xi}\S)|\mu\rangle\\
=\frac{(\S)^{2\lambda}}{(\tau_\Xi\S)^{2\mu}}\frac{\cc_\S(\lambda)}{\cc_{\tau_\Xi\S}(\mu)}\langle\mu|\B^*(\kappa_n\mid\tau^{n-1}_{\S}\Xi,\tau^{n-1}_{\Xi}\S)\dots\B^*(\kappa_2\mid\tau_{\S}\Xi,\tau_{\Xi}\S)\B^*(\kappa_1\mid\Xi,\S)|\lambda\rangle,
\end{multline*}
where $\tau_\Xi$ and $\tau_\S$ are the mixed shift operators defined by \eqref{mixedShiftDef}.
\end{prop}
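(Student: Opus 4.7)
Both identities follow by direct unpacking against the defining partition function, together with Proposition \ref{adjProp} for the second.

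For the first identity, I will match the parameters of the row operators with the vertex weights in Figure \ref{partitionFunction}. The mixed shift identities $s^{(i)}_j \xi^{(i)}_j = s_{i+j}\xi_{i+j}$ and $s^{(i)}_0/\xi^{(i)}_0 = s_0/\xi_0$, both immediate from \eqref{mixedShiftDef}, show that the column $j$ vertex of $\T_a(\kappa_i \mid \tau^i_\S \Xi, \tau^i_\Xi \S)$ carries weight $W^\s$ with parameters $(\sqrt{s_{i+j}\xi_{i+j}/\kappa_i}, s^{(i)}_j)$, exactly the weight of the $(i,j)$-vertex of the partition function defining $\ZW$. Inserting a resolution of the identity between consecutive operators and reading off the intermediate states $\nu^{(i)}$ as the configurations of vertical edges between the $i$th and $(i{+}1)$st rows, one obtains
\[
\langle \lambda \mid \T_{a_1}(\kappa_1 \mid \tau_\S \Xi, \tau_\Xi \S) \cdots \T_{a_n}(\kappa_n \mid \tau^n_\S \Xi, \tau^n_\Xi \S) \mid \mu \rangle = \ZW^{(a_1,\dots,a_n)}_{\lambda/\mu}(\kappa_1,\dots,\kappa_n\mid\Xi,\S).
\]
Using the identity $s^{(i)}_0 \xi^{(i)}_0 = s_i \xi_i$ to rewrite the scalar coefficient in the expansion \eqref{defC} of each $\C(\kappa_i \mid \tau^i_\S \Xi, \tau^i_\Xi \S)$ as $(\kappa_i s_0/\xi_0)^{a_i} (\kappa_i^{-1} s_i \xi_i;q)_{a_i}/(q;q)_{a_i}$, and comparing with \eqref{defPartitionF}, yields the first equality.

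For the second identity, apply Proposition \ref{adjProp} to each factor. Setting $\Xi'_i = \tau^{i-1}_\S \Xi$, $\S'_i = \tau^{i-1}_\Xi \S$, the cocycle property of the mixed shift gives $\tau_{\S'_i}\Xi'_i = \tau^i_\S \Xi$ and $\tau_{\Xi'_i}\S'_i = \tau^i_\Xi\S$, so Proposition \ref{adjProp} rewrites
\[
\langle \nu^{(i-1)} \mid \C(\kappa_i \mid \tau^i_\S\Xi, \tau^i_\Xi\S) \mid \nu^{(i)} \rangle = \frac{(\tau^{i-1}_\Xi \S)^{2\nu^{(i-1)}}}{(\tau^i_\Xi \S)^{2\nu^{(i)}}} \frac{\cc_{\tau^{i-1}_\Xi \S}(\nu^{(i-1)})}{\cc_{\tau^i_\Xi \S}(\nu^{(i)})} \langle \nu^{(i)} \mid \B^*(\kappa_i \mid \tau^{i-1}_\S\Xi, \tau^{i-1}_\Xi\S) \mid \nu^{(i-1)} \rangle.
\]
With $\nu^{(0)}:=\lambda$ and $\nu^{(n)}:=\mu$, the product over $i=1,\dots,n$ of the scalar prefactors telescopes to $\frac{(\S)^{2\lambda}}{(\tau^n_\Xi \S)^{2\mu}} \frac{\cc_\S(\lambda)}{\cc_{\tau^n_\Xi \S}(\mu)}$, independent of the intermediate partitions. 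Summing the remaining product of $\B^*$-matrix elements over $\nu^{(1)},\dots,\nu^{(n-1)}$ reconstitutes the matrix product $\langle \mu \mid \B^*(\kappa_n \mid \tau^{n-1}_\S\Xi, \tau^{n-1}_\Xi\S) \cdots \B^*(\kappa_1 \mid \Xi, \S) \mid \lambda\rangle$, giving the second equality.

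\textbf{Main obstacle.} The argument is essentially bookkeeping; the only nontrivial step is verifying that the scalar prefactors from Proposition \ref{adjProp} telescope correctly, which requires keeping careful track of how $\tau^i_\S\Xi$ and $\tau^i_\Xi\S$ interact with the recurrence $\Xi'_i = \tau^{i-1}_\S\Xi$. The operators $\C(\kappa_i \mid \ldots)$ lie only in $\widetilde{\End}(\V)$ rather than $\End(\V)$, but this causes no issue here: each matrix element $\langle \lambda \mid \C_1 \cdots \C_n \mid \mu \rangle$ reduces to a finite sum over intermediate partitions by the vanishing \eqref{vanishing} (the first coordinates $\nu^{(i)}_1$ are constrained to lie between $\mu_1$ and $\lambda_1$, and interlacing bounds the remaining parts).
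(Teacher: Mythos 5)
Your proof is correct and follows essentially the same route as the paper: the first equality by matching the vertex parameters of $\T_{a_i}(\kappa_i\mid\tau^i_\S\Xi,\tau^i_\Xi\S)$ with the $(i,j)$-vertices of $\ZW^{(a_1,\dots,a_n)}_{\lambda/\mu}$ via $s^{(i)}_j\xi^{(i)}_j=s_{i+j}\xi_{i+j}$ and $s^{(i)}_0/\xi^{(i)}_0=s_0/\xi_0$, and the second by applying Proposition \ref{adjProp} factor by factor and telescoping. One remark: your telescoped prefactor $\frac{(\S)^{2\lambda}}{(\tau^n_\Xi\S)^{2\mu}}\frac{\cc_\S(\lambda)}{\cc_{\tau^n_\Xi\S}(\mu)}$ is the correct one -- the $\tau_\Xi\S$ in the printed statement is evidently a typo for $\tau^n_\Xi\S$ (they agree only when $n=1$), as confirmed by the definition of $\F^{\s*}_{\lambda/\mu}$ and its row-operator expression in Section \ref{cauchySection}.
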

\begin{proof} The second equality follows at once from Proposition \ref{adjProp}, so we focus on the first equality.

Recall that
\be
\tau^i_{\S}\Xi=(\xi^{(i)}_0, \xi^{(i)}_1, \dots),\qquad \tau^i_{\Xi}\S=(s^{(i)}_0, s^{(i)}_1, \dots),
\ee
where $s^{(i)}_j=\sqrt{s_{i+j}\xi_{i+j}s_j/\xi_j}$, and $\xi^{(i)}_j$ satisfy $\xi^{(i)}_js^{(i)}_j=\xi_{i+j}s_{i+j}$.  Thus, the parameters of the $(i,j)$th vertex in the partition function $ZW_{\lambda/\mu}^{(a_1,\dots, a_n)}(\kappa_1,\dots,\kappa_n\mid\Xi,\S)$ coincide with the parameters of the $j$th vertex in the row of vertices used to define $\T_{a_i}(\kappa_i\mid\tau_{\S}^i\Xi, \tau_{\Xi}^i\S)$, or, equivalently, we have
\be
ZW_{\lambda/\mu}^{(a_1,\dots, a_n)}(\kappa_1,\dots,\kappa_n\mid\Xi,\S)=\langle\lambda|\T_{a_1}(\kappa_1\mid\tau_{\S}\Xi,\tau_{\Xi}\S)\dots\T_{a_n}(\kappa_n\mid\tau^n_{\S}\Xi,\tau^n_{\Xi}\S)|\mu\rangle.
\ee
On the other hand, using relations $s^{(i)}_0\xi^{(i)}_0=s_i\xi_i$ and $s^{(i)}_0/\xi^{(i)}_0=s_0/\xi_0$, we see that the coefficients in \eqref{defPartitionF} coincide with the coefficients in \eqref{defC}, concluding the proof.
\end{proof}

As a corollary we observe that the functions $\F_{\lambda/\mu}(\kappa_1,\dots, \kappa_n\mid\Xi,\S)$ are polynomials in $\kappa_1,\dots, \kappa_n$. Indeed, matrix coefficients of the operator $\C(\kappa\mid\Xi,\S)$ are polynomials in $\kappa$, so Proposition \ref{Foperator} implies polynomiality.  

\begin{prop} \label{FFoperatorProp} For any partitions $\lambda,\mu$ we have
\begin{equation}
\label{FFoperator}
\FF^\s_{\lambda'/\mu'}(u_1,\dots, u_n\mid\Xi,\S)=\frac{\(\S\)^{2\lambda}}{\(\S\)^{2\mu}}\frac{\cc_{\S}(\lambda)}{\cc_{\S}(\mu)}\langle\mu|\tB^*(u_n\mid\Xi,\S)\dots\tB^*(u_2\mid\Xi,\S)\tB^*(u_1\mid\Xi,\S)|\lambda\rangle.
\end{equation}
\end{prop}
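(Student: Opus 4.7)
The proof parallels that of Proposition \ref{Foperator}, now with the vertex-level identity \eqref{dualWeights} needed to convert between the $w^\s$ weights underlying $\FF^\s$ and the $w^{\s*}$ weights underlying $T^*_a$. Expanding the right-hand side using \eqref{deftB} and inserting resolutions of the identity between consecutive $\tB^*$'s gives
\begin{equation*}
\langle\mu|\tB^*(u_n)\cdots\tB^*(u_1)|\lambda\rangle = \sum_{(a_1,\dots,a_n)\in\{0,1\}^n}\prod_{i=1}^n(-u_i\xi_0/s_0)^{a_i}\sum_{\nu^{(1)},\dots,\nu^{(n-1)}}\prod_{i=1}^n\langle\nu^{(i)}|T^*_{a_i}(u_i)|\nu^{(i-1)}\rangle,
\end{equation*}
with $\nu^{(0)}:=\lambda$ and $\nu^{(n)}:=\mu$. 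By \eqref{defUnfusedDualT}, each factor is a semi-infinite $w^{\s*}$-row partition function with bottom multiplicities $m_j((\nu^{(i-1)})')$, top multiplicities $m_j((\nu^{(i)})')$, left boundary $a_i$, and right boundary $0$; stacking the $n$ rows yields a two-dimensional $w^{\s*}$-grid.

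I then apply \eqref{dualWeights} at every vertex. At each vertex it replaces the $w^{\s*}$ weight by the scalar $s_j^{-2l}\,\tfrac{(q;q)_b(s_j^2;q)_t}{(s_j^2;q)_b(q;q)_t}$ (with $b,t,l$ the vertex's bottom, top, and right-edge labels, and $s_j$ the column spin) times the $w^\s$ weight of the same vertex with its vertical labels swapped. Globally this swap amounts to reading the grid upside down in $w^\s$-convention: the operator-stack row order reverses, so $\lambda$ now sits at the top, $\mu$ at the bottom, and row $i$ counted from the top carries parameter $u_i$ with left boundary $a_i$ -- exactly the configuration of Figure \ref{partitionFunctionw}. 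Summing over the intermediate partitions $\nu^{(1)},\dots,\nu^{(n-1)}$ (now indexing internal vertical edges) recovers $\Zw^{(a_1,\dots,a_n)}_{\lambda/\mu}(u_1,\dots,u_n\mid\Xi,\S)$.

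It remains to collect the per-vertex prefactors. The $q$-Pochhammer factors telescope down each column $j$ because the top of one vertex equals the bottom of the next, producing $\prod_{j\ge 1}\tfrac{(q;q)_{m_j(\lambda')}(s_j^2;q)_{m_j(\mu')}}{(s_j^2;q)_{m_j(\lambda')}(q;q)_{m_j(\mu')}}=\cc_\S(\mu)/\cc_\S(\lambda)$. For the $s_j^{-2l}$ factors, the conservation argument preceding \eqref{vanishing} fixes the right-edge label at row $i$, column $j$ to equal $\nu^{(i-1)}_{j+1}-\nu^{(i)}_{j+1}$, so telescoping in $i$ yields $\prod_{j\ge 1}s_j^{-2(\lambda_{j+1}-\mu_{j+1})}=s_0^{2\sum_i a_i}\cdot(\S)^{2\mu}/(\S)^{2\lambda}$; here the $s_0^{2\sum a_i}$ factor reflects both the identity $\lambda_1-\mu_1=\sum_i a_i$ (global conservation across the grid) and the discrepancy between the $j\ge 1$ product above and the $j\ge 0$ product implicit in $(\S)^{2\lambda}/(\S)^{2\mu}$. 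Combining yields
\begin{equation*}
\langle\mu|T^*_{a_n}(u_n)\cdots T^*_{a_1}(u_1)|\lambda\rangle=s_0^{2\sum a_i}\,\frac{(\S)^{2\mu}}{(\S)^{2\lambda}}\,\frac{\cc_\S(\mu)}{\cc_\S(\lambda)}\,\Zw^{(a_1,\dots,a_n)}_{\lambda/\mu}(u_1,\dots,u_n\mid\Xi,\S).
\end{equation*}

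Substituting into the expansion of the operator product, the prefactor $(\S)^{2\lambda}/(\S)^{2\mu}\cdot\cc_\S(\lambda)/\cc_\S(\mu)$ in the target identity cancels its inverse, and the elementary identity $(-u_i\xi_0/s_0)^{a_i}\cdot s_0^{2a_i}=(-u_i\xi_0 s_0)^{a_i}$ converts the coefficient from $\tB^*$ into the one in \eqref{defPartitionFF}; summing over $(a_1,\dots,a_n)\in\{0,1\}^n$ completes the identification with $\FF^\s_{\lambda'/\mu'}$. The main obstacle is the careful bookkeeping of the three prefactor families: recognizing that \eqref{dualWeights} induces a global vertical flip of the grid is the key conceptual step, and the $s_0^{2\sum a_i}$ boundary contribution to the $s_j^{-2l}$ telescoping must be matched precisely against the discrepancy between the $-u\xi_0 s_0$ and $-u\xi_0/s_0$ coefficients appearing in \eqref{defPartitionFF} and \eqref{deftB} respectively.
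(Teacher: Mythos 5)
Your proof is correct and follows essentially the same route as the paper: both stack the $n$ rows, convert between the $w^{\s}$ and $w^{\s*}$ grids via the per-vertex relation \eqref{dualWeights} (the paper packages the column-wise telescoping as \eqref{stackedDualWeights}), and then match the coefficients of \eqref{deftB} against those of \eqref{defPartitionFF}. Your bookkeeping is in fact the accurate version — the conversion constant is $s_0^{2\sum_i a_i}=s_0^{2(\lambda_1-\mu_1)}$, exactly absorbing the discrepancy between $(-u\xi_0/s_0)^a$ and $(-u\xi_0 s_0)^a$, whereas the paper's displayed intermediate identity carries $s_0^{-\lambda_1}/s_0^{-\mu_1}$ where $s_0^{-2\lambda_1}/s_0^{-2\mu_1}$ is needed.
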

\begin{proof}
Similarly to the proof of \eqref{Foperator}, we consider the partition function $\Zw^{(a_1,\dots, a_n)}_{\lambda/\mu}(u_1,\dots, u_n\mid\Xi,\S)$ row by row, interpreting it as a matrix coefficient of a composition of row operators. In addition, here we will change the weights $w^\s$ to the weights $w^{\s*}$ using \eqref{dualWeights}. We have already used this change of weights for a single column during the discussion of fusion of the row operators $\tB^*(u\mid\Xi,\S)$, see \eqref{stackedDualWeights}. 

Noting that for the partition function $\Zw^{(a_1,\dots, a_n)}_{\lambda/\mu}(u_1,\dots, u_n\mid\Xi,\S)$ the sum of all labels of edges between columns $L-1$ and $L$ is equal to $\lambda_{L}-\mu_{L}$, we can use  \eqref{stackedDualWeights} for each column in $\Zw^{(a_1,\dots, a_n)}_{\lambda/\mu}(u_1,\dots, u_n\mid\Xi,\S)$ to get
\be
\Zw^{(a_1,\dots, a_n)}_{\lambda/\mu}(u_1,\dots, u_n\mid\Xi,\S)=\frac{s_0^{-\lambda_1}\(\S\)^{2\lambda}}{s_0^{-\mu_1}\(\S\)^{2\mu}}\frac{\cc_{\S}(\lambda)}{\cc_{\S}(\mu)}\langle\mu|T_{a_n}^*(u_n\mid\Xi,\S)\dots T_{a_2}^*(u_2\mid\Xi,\S)T_{a_1}^*(u_1\mid\Xi,\S)|\lambda\rangle.
\ee
The claim follows by comparing coefficients in \eqref{deftB} and \eqref{defPartitionFF}.
\end{proof}

\subsection{Properties of the inhomogeneous spin $q$-Whittaker functions.} So far we have been establishing an algebraic framework to analyse the functions $\F_{\lambda/\mu}(\kappa_1,\dots, \kappa_n|\mid\Xi,\S)$. Here we finally use it to prove our main results about these functions.
 
 \begin{prop}[Branching rule]\label{branching} For any partitions $\lambda,\mu$ and $1\leq m<n$ we have
 \begin{equation}
 \label{branchingEq}
 \F_{\lambda/\mu}(\kappa_1,\dots, \kappa_n\mid\Xi,\S)=\sum_{\nu} \F_{\lambda/\nu}(\kappa_1,\dots, \kappa_m\mid\Xi,\S)\F_{\nu/\mu}(\kappa_{m+1},\dots, \kappa_n\mid\tau^{m}_\S\Xi,\tau^{m}_\Xi\S),
 \end{equation}
 where the sum on the right-hand side is finite.
 \end{prop}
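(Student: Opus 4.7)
The plan is to lift the branching rule from the row-operator level via Proposition~\ref{Foperator} and then to pass to the normalized functions $\F_{\lambda/\mu}$ by bookkeeping the prefactors in \eqref{Frenorm}. First, I would write
\begin{equation*}
\F^\s_{\lambda/\mu}(\kappa_1,\dots,\kappa_n\mid\Xi,\S)
=\langle\lambda|\,\C_1\,\C_2\,\cdots\,\C_n\,|\mu\rangle,\qquad \C_i:=\C(\kappa_i\mid\tau_\S^i\Xi,\tau_\Xi^i\S),
\end{equation*}
and insert the resolution of identity $\sum_\nu|\nu\rangle\langle\nu|$ between $\C_m$ and $\C_{m+1}$. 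The first $m$ factors form $\F^\s_{\lambda/\nu}(\kappa_1,\dots,\kappa_m\mid\Xi,\S)$ on the nose.

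Second, the key identification is that the last $n-m$ factors compute $\F^\s_{\nu/\mu}$ with the inhomogeneity sequences shifted. Using the composition rule $\tau^k_{\mathcal B'}\tau^l_{\mathcal B}\mathcal A=\tau^{k+l}_{\mathcal B}\mathcal A$ with $\mathcal B'=\tau^l_{\mathcal A}\mathcal B$, applied with $\mathcal A=\Xi,\mathcal B=\S$ (resp.\ $\mathcal A=\S,\mathcal B=\Xi$) and $l=m$, $k=j$, I would verify that
\begin{equation*}
\tau^j_{\S'}\Xi'=\tau^{m+j}_\S\Xi,\qquad \tau^j_{\Xi'}\S'=\tau^{m+j}_\Xi\S,\qquad (\Xi',\S'):=(\tau^m_\S\Xi,\tau^m_\Xi\S).
\end{equation*}
Thus $\C(\kappa_{m+j}\mid\tau_\S^{m+j}\Xi,\tau_\Xi^{m+j}\S)=\C(\kappa_{m+j}\mid\tau_{\S'}^j\Xi',\tau_{\Xi'}^j\S')$ for $1\leq j\leq n-m$, and Proposition~\ref{Foperator} identifies
\begin{equation*}
\langle\nu|\,\C_{m+1}\cdots\C_n\,|\mu\rangle
=\F^\s_{\nu/\mu}(\kappa_{m+1},\dots,\kappa_n\mid\tau^m_\S\Xi,\tau^m_\Xi\S).
\end{equation*}
This yields the branching rule for the stochastic normalization $\F^\s$.

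Third, to pass to $\F_{\lambda/\mu}$ I would substitute \eqref{Frenorm} on both sides. The left-hand factor is
\begin{equation*}
\frac{(-\S)^{\lambda}}{(-\tau_\Xi^n\S)^{\mu}}\frac{\cc_\S(\lambda)}{\cc_{\tau^n_\Xi\S}(\mu)},
\end{equation*}
while each summand on the right carries a product of two such factors; the pair for $\F_{\lambda/\nu}$ brings $(-\tau_\Xi^m\S)^\nu/(-\S)^\lambda$ and $\cc_\S(\lambda)/\cc_{\tau_\Xi^m\S}(\nu)$, and the pair for $\F_{\nu/\mu}(\cdots\mid\tau_\S^m\Xi,\tau_\Xi^m\S)$ brings $(-\tau^{n-m}_{\tau^m_\S\Xi}\tau^m_\Xi\S)^\mu/(-\tau^m_\Xi\S)^\nu$ and the analogous ratio of $\cc$-factors. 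Using $\tau^{n-m}_{\tau^m_\S\Xi}\tau^m_\Xi\S=\tau^n_\Xi\S$, the $\nu$-dependent prefactors telescope, matching the left-hand side exactly and allowing me to divide out a common factor that depends only on $\lambda,\mu,\Xi,\S,n$.

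Finally, I would address finiteness of the sum over $\nu$. The vanishing property \eqref{vanishing} applied to each $\C_i$ forces $\mu\prec\cdots\prec\nu\prec\cdots\prec\lambda$ (in particular $\mu\subset\nu\subset\lambda$ with controlled part sizes), leaving only finitely many nonzero $\nu$. The main potential obstacle is precisely the prefactor bookkeeping in the previous paragraph; once the shift identities for $\tau^k_{\mathcal B}\mathcal A$ are used carefully the cancellations are routine, but keeping the square-root conventions for $\xi^{(k)}_i,s^{(k)}_i$ consistent requires some care.
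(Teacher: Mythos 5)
Your proposal is correct and follows essentially the same route as the paper: express $\F^\s_{\lambda/\mu}$ as a product of row operators via Proposition~\ref{Foperator}, insert $\sum_\nu|\nu\rangle\langle\nu|$ between the $m$th and $(m+1)$st factors, and then telescope the normalization prefactors of \eqref{Frenorm} using $\tau^{n-m}_{\tau^m_\S\Xi}\tau^m_\Xi\S=\tau^n_\Xi\S$. Your extra remarks on the shift-composition identities and on finiteness of the sum via \eqref{vanishing} are consistent with (and slightly more explicit than) what the paper records.
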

 \begin{proof}
 A similar claim for the functions $\F^\s_{\lambda/\mu}$  follows at once from Proposition \ref{Foperator} by an insertion of the summation $\sum_{\nu}|\nu\rangle\langle\nu|=id$:
 \begin{multline*}
  \F^\s_{\lambda/\mu}(\kappa_1,\dots, \kappa_n\mid\Xi,\S)=\langle\lambda|\C(\kappa_1\mid\tau_\S\Xi\tau_\Xi\S)\dots\C(\kappa_n\mid\tau^n_\S\Xi,\tau^n_\Xi\S)|\mu\rangle\\
 =\sum_\nu\langle\lambda|\C(\kappa_1\mid\tau_\S\Xi,\tau_\Xi\S)\dots\C(\kappa_m\mid\tau^m_\S\Xi,\tau^m_\Xi\S)|\nu\rangle\langle\nu|\C(\kappa_{m+1}\mid\tau^{m+1}_\S\Xi,\tau^{m+1}_\Xi\S)\dots\C(\kappa_n\mid\tau^n_\S\Xi,\tau^n_\Xi\S)|\mu\rangle\\
 =\sum_{\nu} \F^\s_{\lambda/\nu}(\kappa_1,\dots, \kappa_m\mid\Xi,\S)\F^\s_{\nu/\mu}(\kappa_{m+1},\dots, \kappa_n\mid\tau^{m}_\S\Xi,\tau^{m}_\Xi\S).
 \end{multline*}
 To reach \eqref{branchingEq}, one multiplies the above equality by
  \be
 \frac{(-\tau_\Xi^n\S)^{\mu}}{(-\S)^{\lambda}}\frac{\cc_{\tau^n_{\Xi}\S}(\mu)}{\cc_\S(\lambda)}=\frac{(-\tau_\Xi^n\S)^{\mu}}{(-\tau_\Xi^m\S)^{\nu}}\frac{\cc_{\tau^n_{\Xi}\S}(\mu)}{\cc_{\tau^m_\Xi\S}(\nu)}\cdot\frac{(-\tau_\Xi^m\S)^{\nu}}{(-\S)^{\lambda}}\frac{\cc_{\tau^m_{\Xi}\S}(\nu)}{\cc_\S(\lambda)}.
 \ee
 \end{proof}
 
 \begin{prop}\label{rowValue}We have
 \be
 \F_{\lambda/\mu}(\kappa\mid\Xi,\S)=\begin{cases}
 (-\kappa)^{|\lambda|-|\mu|}{\displaystyle \prod_{i\geq 1}}\xi_{i-1}^{\mu_i-\lambda_i} \(\frac{s_i\xi_i}{s_{i-1}\xi_{i-1}}\)^{\mu_i/2}\dfrac{(\kappa^{-1}s_i\xi_i;q)_{\lambda_i-\mu_i}(\kappa s_i/\xi_i;q)_{\mu_i-\lambda_{i+1}}(q;q)_{\lambda_i-\lambda_{i+1}}}{(q;q)_{\lambda_i-\mu_i}(q;q)_{\mu_i-\lambda_{i+1}}(s_i^2;q)_{\lambda_i-\lambda_{i+1}}}\\
 &\mkern-48mu \text{if}\ \mu\prec\lambda,\\
 0\ \  &\mkern-48mu\text{otherwise}.
 \end{cases}
 \ee
 \end{prop}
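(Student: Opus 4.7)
The proof is a direct unwinding of the definitions, so the plan is essentially a careful bookkeeping argument. I would first apply Proposition \ref{Foperator} with $n=1$ together with formula \eqref{coefficientC} to write
\begin{equation*}
\F^{\s}_{\lambda/\mu}(\kappa\mid\Xi,\S)=\langle\lambda|\C(\kappa\mid\tau_\S\Xi,\tau_\Xi\S)|\mu\rangle=\(\kappa s_0/\xi_0\)^{\lambda_1-\mu_1}\frac{(\kappa^{-1}s_1\xi_1;q)_{\lambda_1-\mu_1}}{(q;q)_{\lambda_1-\mu_1}}\,\langle\lambda|\T_{\lambda_1-\mu_1}(\kappa\mid\tau_\S\Xi,\tau_\Xi\S)|\mu\rangle,
\end{equation*}
where I used $s_0^{(1)}/\xi_0^{(1)}=s_0/\xi_0$ and $s_0^{(1)}\xi_0^{(1)}=s_1\xi_1$. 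By the vanishing property \eqref{vanishing} the matrix element is zero unless $\mu\prec\lambda$, which gives the second case of the statement.

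For $\mu\prec\lambda$ the conservation law uniquely determines all horizontal labels in the semi-infinite row: the edge between columns $c-1$ and $c$ carries $\lambda_c-\mu_c$. Thus the $\T$-matrix element is a product of single-vertex weights. Substituting into \eqref{Wredef} with $s=s_c^{(1)}$, $\xi=\xi_c^{(1)}$, and using $s_c^{(1)}/\xi_c^{(1)}=s_c/\xi_c$, $s_c^{(1)}\xi_c^{(1)}=s_{c+1}\xi_{c+1}$, and $(s_c^{(1)})^2=s_{c+1}\xi_{c+1}s_c/\xi_c$, the product becomes
\begin{equation*}
\prod_{c\geq 1}(\kappa s_c/\xi_c)^{\lambda_{c+1}-\mu_{c+1}}\,\frac{(\kappa s_c/\xi_c;q)_{\mu_c-\lambda_{c+1}}(\kappa^{-1}s_{c+1}\xi_{c+1};q)_{\lambda_{c+1}-\mu_{c+1}}}{(s_{c+1}\xi_{c+1}s_c/\xi_c;q)_{\mu_c-\mu_{c+1}}}\,\frac{(q;q)_{\mu_c-\mu_{c+1}}}{(q;q)_{\mu_c-\lambda_{c+1}}(q;q)_{\lambda_{c+1}-\mu_{c+1}}}.
\end{equation*}

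The remaining work is to multiply by the renormalization prefactor $\frac{(-\tau_\Xi\S)^{\mu}}{(-\S)^{\lambda}}\frac{\cc_{\tau_\Xi\S}(\mu)}{\cc_\S(\lambda)}$ from \eqref{Frenorm} and simplify. The cancellations that need to be tracked are: (i) $\cc_{\tau_\Xi\S}(\mu)$ cancels the $(s_{c+1}\xi_{c+1}s_c/\xi_c;q)_{\mu_c-\mu_{c+1}}$ denominators and the $(q;q)_{\mu_c-\mu_{c+1}}$ numerators; (ii) $1/\cc_\S(\lambda)$ produces the $(q;q)_{\lambda_i-\lambda_{i+1}}/(s_i^2;q)_{\lambda_i-\lambda_{i+1}}$ factors; (iii) the $(\kappa^{-1}s_1\xi_1;q)_{\lambda_1-\mu_1}/(q;q)_{\lambda_1-\mu_1}$ prefactor combines with the reindexed $(\kappa^{-1}s_{c+1}\xi_{c+1};q)_{\lambda_{c+1}-\mu_{c+1}}/(q;q)_{\lambda_{c+1}-\mu_{c+1}}$ to yield $\prod_{i\geq 1}(\kappa^{-1}s_i\xi_i;q)_{\lambda_i-\mu_i}/(q;q)_{\lambda_i-\mu_i}$; (iv) the monomial collection $\frac{(-s_{i-1}^{(1)})^{\mu_i}}{(-s_{i-1})^{\lambda_i}}\prod_{i\geq 1}(\kappa s_{i-1}/\xi_{i-1})^{\lambda_i-\mu_i}$ — with $s_{i-1}^{(1)}/s_{i-1}=\sqrt{s_i\xi_i/(s_{i-1}\xi_{i-1})}$ — simplifies to $(-\kappa)^{|\lambda|-|\mu|}\prod_i\xi_{i-1}^{\mu_i-\lambda_i}(s_i\xi_i/(s_{i-1}\xi_{i-1}))^{\mu_i/2}$.

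The entire argument is mechanical once one is careful with indices. The only mild obstacle is the reindexing shift $c\mapsto c+1$ between the per-vertex contributions and the target formula indexed by $i$: the reason the separate $(\kappa^{-1}s_1\xi_1;q)_{\lambda_1-\mu_1}$ factor and the $i=1$ factor from the $\xi_{i-1},s_{i-1}$ prefactor exist at all is to fill in the $i=1$ slot that the vertex product itself does not cover (the vertex at column $c=1$ contributes to the $i=2$ slot of the final product). Once this shift is properly accounted for, the matching is exact, completing the proof.
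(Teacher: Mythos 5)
Your proposal is correct and follows exactly the route the paper takes (the paper's own proof is just a three-line sketch citing \eqref{CviaWexpression}, \eqref{Wredef} and \eqref{Frenorm}); you have simply carried out the bookkeeping the paper leaves implicit, and all the substitutions, cancellations, and the index shift $c\mapsto c+1$ check out. No gaps.
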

\begin{proof}
This is a result of an explicit computation consisting of the following steps: we consider an expression of $\langle\lambda|\C(\kappa\mid\tau_\S\Xi,\tau_\Xi\S)|\mu\rangle$ as a product of the vertex weights $W^\s$, which is done in \eqref{CviaWexpression} by using the conservation law, then plug in the explicit values of the weights $W^\s$ from \eqref{Wredef}, and finally renormalize the resulting functions $\F^s_{\lambda/\mu}(\kappa\mid\Xi,\S)$ to obtain the functions $\F_{\lambda/\mu}(\kappa\mid\Xi,\S)$ using \eqref{Frenorm}.
\end{proof} 
Note that the square roots $ \(\frac{s_i\xi_i}{s_{i-1}\xi_{i-1}}\)^{\mu_i/2}$ in the expression from Proposition \ref{rowValue} are due to the renormalization \eqref{Frenorm}, which involves square roots.
 
 \begin{cor}
 For a pair of partitions $\lambda,\mu$ the function $\F_{\lambda/\mu}(\kappa_1,\dots, \kappa_n\mid\Xi,\S)$ vanishes unless for any $r\geq 1$ we have
 \be
\mu'_r\leq\lambda'_r\leq\mu'_r+n.
 \ee
In particular,
\be
\F_{\lambda/\mu}(\kappa_1,\dots, \kappa_n\mid\Xi,\S)=0\qquad \text{unless}\ \mu\subset\lambda\ \text{and}\ l(\lambda)\leq l(\mu)+n.
\ee
 \end{cor}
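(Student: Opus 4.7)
My plan is to deduce the corollary directly from the branching rule (Proposition \ref{branching}) together with the one-variable evaluation (Proposition \ref{rowValue}). Iterating the branching rule $n-1$ times yields an expansion
\begin{equation*}
\F_{\lambda/\mu}(\kappa_1,\dots,\kappa_n\mid\Xi,\S)=\sum_{\mu=\nu^{(0)},\nu^{(1)},\dots,\nu^{(n)}=\lambda}\prod_{i=1}^{n}\F_{\nu^{(i)}/\nu^{(i-1)}}\bigl(\kappa_i\,\bigm|\,\tau^{i-1}_{\S}\Xi,\tau^{i-1}_{\Xi}\S\bigr),
\end{equation*}
where the sum runs over arbitrary sequences of partitions interpolating between $\mu$ and $\lambda$.

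By Proposition \ref{rowValue}, the single-variable factor $\F_{\nu^{(i)}/\nu^{(i-1)}}(\kappa_i\mid\cdot,\cdot)$ vanishes unless $\nu^{(i-1)}\prec\nu^{(i)}$. Thus only interlacing chains $\mu=\nu^{(0)}\prec\nu^{(1)}\prec\cdots\prec\nu^{(n)}=\lambda$ contribute to the sum. Recall that the interlacing $\alpha\prec\beta$ is equivalent, in terms of conjugate partitions, to the condition $\alpha'_r\leq\beta'_r\leq\alpha'_r+1$ for every $r\geq 1$. Telescoping along a valid chain therefore produces the bounds
\begin{equation*}
\mu'_r=\nu^{(0)\prime}_r\leq\nu^{(1)\prime}_r\leq\cdots\leq\nu^{(n)\prime}_r=\lambda'_r,\qquad \lambda'_r-\mu'_r=\sum_{i=1}^{n}\bigl(\nu^{(i)\prime}_r-\nu^{(i-1)\prime}_r\bigr)\leq n,
\end{equation*}
so the existence of any nonzero term forces $\mu'_r\leq\lambda'_r\leq\mu'_r+n$ for all $r\geq 1$.

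The remaining claims are immediate restatements. The inequality $\mu'_r\leq\lambda'_r$ for all $r$ is equivalent to $\mu\subset\lambda$, and the length bound reads $l(\lambda)=\lambda'_1\leq\mu'_1+n=l(\mu)+n$. No step presents a real obstacle here; the entire argument is a bookkeeping exercise once the branching rule and the one-row vanishing have been established.
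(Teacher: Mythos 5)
Your argument is correct and is essentially the paper's own proof: iterate the branching rule down to one-variable factors, invoke the one-row vanishing to restrict to interlacing chains $\mu=\nu^{(0)}\prec\cdots\prec\nu^{(n)}=\lambda$, and telescope the conjugate-partition inequality $\alpha'_r\leq\beta'_r\leq\alpha'_r+1$ for $\alpha\prec\beta$. (The pairing of $\kappa_i$ and the shifted parameters with the links of the chain is indexed in the opposite order from what the branching rule literally gives, but this is immaterial to the vanishing conclusion.)
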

 \begin{proof}
 We use the following elementary observation: if partitions $\mu$ and $\lambda$ interlace:
 \be
 \lambda_1\geq \mu_1\geq\lambda_2\geq\mu_2\geq\lambda_3\geq \dots,
 \ee
 then for any $r\geq 1$ we have
 \be
 \lambda'_r=\#\{i:\lambda_i\geq r\}\leq \#\{i:\mu_i\geq r\}+1= \mu'_r+1.
 \ee
 Now the claim follows from Propositions \ref{branching} and \ref{rowValue}: the function $\F_{\lambda/\mu}(\kappa_1,\dots, \kappa_n\mid\Xi,\S)$ vanishes unless there exists a sequence of partitions
 \be
 \mu=\nu^{(0)}\prec\nu^{(1)}\prec\dots\prec\nu^{(n)}=\lambda.
 \ee
 \end{proof}
 
 \begin{cor} We have the following stability property
 \be
 \F_\lambda(\kappa_1,\dots,\kappa_{n-1}, s_n\xi_n\mid\Xi,\S)=\F_\lambda(\kappa_1,\dots,\kappa_{n-1}\mid\Xi,\S).
 \ee
 \end{cor}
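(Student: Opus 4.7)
The plan is to reduce to the one-variable case via the branching rule. Applying Proposition \ref{branching} with $m = n-1$ (and $\nu = \varnothing$), I would obtain
\[
\F_\lambda(\kappa_1,\dots,\kappa_n\mid\Xi,\S) \;=\; \sum_\mu \F_{\lambda/\mu}(\kappa_1,\dots,\kappa_{n-1}\mid\Xi,\S)\,\F_\mu(\kappa_n\mid\tau^{n-1}_\S\Xi,\tau^{n-1}_\Xi\S).
\]
Setting $\kappa_n = s_n\xi_n$, the result will follow immediately once I show that under this specialization the single-variable factor $\F_\mu(s_n\xi_n\mid\tau^{n-1}_\S\Xi,\tau^{n-1}_\Xi\S)$ is equal to $\delta_{\mu,\varnothing}$, for then only the $\mu = \varnothing$ summand survives and that summand equals $\F_{\lambda/\varnothing}(\kappa_1,\dots,\kappa_{n-1}\mid\Xi,\S) = \F_\lambda(\kappa_1,\dots,\kappa_{n-1}\mid\Xi,\S)$.

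To establish this vanishing, I would invoke Proposition \ref{rowValue}. Since $\varnothing \prec \mu$ forces $\mu$ to have at most one part, write $\mu = (k)$ with $k \geq 0$; the infinite product in Proposition \ref{rowValue} then collapses to its $i=1$ factor (all other factors equal $1$ because $\mu_i = \lambda_{i+1} = 0$ for $i \geq 2$), giving
\[
\F_{(k)}(\kappa \mid \Xi', \S') \;=\; (-\kappa)^k (\xi'_0)^{-k} \frac{(\kappa^{-1} s'_1 \xi'_1;q)_k}{((s'_1)^2;q)_k}.
\]
Specializing $\Xi' = \tau^{n-1}_\S\Xi$ and $\S' = \tau^{n-1}_\Xi\S$, the defining relation \eqref{mixedShiftDef} of the mixed shift yields the key identity
\[
s'_1 \xi'_1 \;=\; s_1^{(n-1)} \xi_1^{(n-1)} \;=\; s_n \xi_n,
\]
so at $\kappa = s_n\xi_n$ the numerator becomes $(1;q)_k$, which vanishes for every $k \geq 1$. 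The case $k = 0$ is trivial: $\F_\varnothing(\kappa\mid\Xi',\S') = 1$ by direct inspection of the same formula.

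There is essentially no obstacle here beyond bookkeeping; the entire argument rests on the algebraic coincidence $\xi_1^{(n-1)} s_1^{(n-1)} = \xi_n s_n$ built into the mixed shift, which is exactly what collapses the relevant Pochhammer symbol to zero. The one thing to be careful about is verifying that the cases $i \geq 2$ in Proposition \ref{rowValue} really contribute trivial factors when $\mu = (k)$, but this is straightforward since all the relevant partition multiplicities vanish.
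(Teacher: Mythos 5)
Your proposal is correct and follows essentially the same route as the paper: both reduce to the one-variable case via the branching rule with $m=n-1$ and then use Proposition \ref{rowValue} together with the identity $s_1^{(n-1)}\xi_1^{(n-1)}=s_n\xi_n$ to show that $\F_{\mu}(s_n\xi_n\mid\tau^{n-1}_\S\Xi,\tau^{n-1}_\Xi\S)=\1_{\mu=\varnothing}$. The only cosmetic difference is that you spell out the collapse of the product in Proposition \ref{rowValue} to its $i=1$ factor, which the paper leaves implicit.
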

 \begin{proof}
From Proposition \ref{rowValue} we have
 \be
 \F_{\lambda}(\kappa\mid\Xi,\S)=\begin{cases}
 (-\kappa)^{|\lambda|}\xi_{0}^{-\lambda_1}\dfrac{(s_1\xi_1/\kappa;q)_{\lambda_1}(q;q)_{\lambda_1}}{(q;q)_{\lambda_1}(s_1^2;q)_{\lambda_1}}\quad&\text{if}\ \lambda_2=\lambda_3=\dots=0,\\
 0\quad &\text{otherwise}.
 \end{cases}
 \ee
 After replacing $\Xi, \S$ by $\tau^{n-1}_\S\Xi, \tau^{n-1}_\Xi\S$ and setting $\kappa=s^{(n-1)}_1\xi^{(n-1)}_1=s_n\xi_n$, we get
 \be
 \F_{\lambda}(s_n\xi_n\mid\tau^{n-1}_\S\Xi,\tau^{n-1}_\Xi\S)=\1_{\lambda=\varnothing}.
 \ee
The claim now follows from the branching rule of Proposition \ref{branching} for $m=n-1$.
 \end{proof}
 
 \begin{theo}
 The functions $\F_{\lambda/\mu}(\kappa_1,\dots,\kappa_n\mid\Xi,\S)$ are symmetric polynomials in $\kappa_1,\dots, \kappa_n$.
 \end{theo}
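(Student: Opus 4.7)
The plan is to combine the row operator representation from Proposition \ref{Foperator} with the commutation relation of Proposition \ref{commutation}, reducing symmetry in $\kappa_1,\dots,\kappa_n$ to pairwise swaps of adjacent operators $\C$. Since the renormalization prefactor in \eqref{Frenorm} relating $\F_{\lambda/\mu}$ to $\F^\s_{\lambda/\mu}$ does not depend on the variables $\kappa_1,\dots,\kappa_n$, it suffices to prove the statement for $\F^\s_{\lambda/\mu}$.

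First I would write, using Proposition \ref{Foperator},
\begin{equation*}
\F^\s_{\lambda/\mu}(\kappa_1,\dots,\kappa_n\mid\Xi,\S)=\langle\lambda|\C(\kappa_1\mid\tau_\S\Xi,\tau_\Xi\S)\C(\kappa_2\mid\tau^2_\S\Xi,\tau^2_\Xi\S)\cdots \C(\kappa_n\mid\tau^n_\S\Xi,\tau^n_\Xi\S)|\mu\rangle,
\end{equation*}
and show invariance under an arbitrary adjacent transposition $\kappa_i\leftrightarrow \kappa_{i+1}$. Setting $\Xi':=\tau^i_\S\Xi$ and $\S':=\tau^i_\Xi\S$, the composition identity $\tau^k_{\mathcal B'}\tau^l_{\mathcal B}\mathcal A=\tau^{k+l}_{\mathcal B}\mathcal A$ from Section \ref{mixedShiftSection} gives $\tau_{\S'}\Xi'=\tau^{i+1}_\S\Xi$ and $\tau_{\Xi'}\S'=\tau^{i+1}_\Xi\S$. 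Proposition \ref{commutation} applied to the pair $(\Xi',\S')$ therefore yields
\begin{equation*}
\C(\kappa_i\mid\tau^i_\S\Xi,\tau^i_\Xi\S)\,\C(\kappa_{i+1}\mid\tau^{i+1}_\S\Xi,\tau^{i+1}_\Xi\S)=\C(\kappa_{i+1}\mid\tau^i_\S\Xi,\tau^i_\Xi\S)\,\C(\kappa_i\mid\tau^{i+1}_\S\Xi,\tau^{i+1}_\Xi\S),
\end{equation*}
so the $(i,i+1)$-block of the product swaps $\kappa_i\leftrightarrow\kappa_{i+1}$ while leaving all other factors (and their inhomogeneity arguments) untouched. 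Since adjacent transpositions generate $S_n$, symmetry follows.

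Polynomiality has essentially been recorded already: matrix coefficients of each $\C(\kappa\mid\cdot,\cdot)$ are, by \eqref{coefficientC} and \eqref{Wredef}, finite products of vertex weights that depend polynomially on $\kappa$, multiplied by a polynomial Pochhammer prefactor; hence so is $\F^\s_{\lambda/\mu}$, and therefore $\F_{\lambda/\mu}$.

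I expect no serious obstacle here, since the heavy lifting lives inside the deformed Yang-Baxter equation of Proposition \ref{defWYBprop} that powers the commutation relation. The only mildly technical point is verifying that the shifted inhomogeneity sequences in the middle factors are undisturbed by the swap, which is exactly what the semigroup law for the mixed shift $\tau$ guarantees.
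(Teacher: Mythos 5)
Your proposal is correct and follows exactly the paper's own argument: write $\F^\s_{\lambda/\mu}$ as a product of row operators $\C(\kappa_i\mid\tau^i_\S\Xi,\tau^i_\Xi\S)$ via Proposition \ref{Foperator}, apply the commutation relation of Proposition \ref{commutation} for adjacent transpositions, and note polynomiality from the matrix coefficients of $\C$. Your explicit check that the semigroup law for the mixed shifts makes the adjacent swap consistent is a detail the paper leaves implicit, but the route is the same.
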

 \begin{proof}
 The functions $\F_{\lambda/\mu}(\kappa_1,\dots,\kappa_n\mid\Xi,\S)$ were observed to be polynomials in Section \ref{rowoprep}. To prove that they are symmetric, we use Proposition \ref{Foperator} to write $\F^{\s}_{\lambda/\mu}(\kappa_1,\dots,\kappa_n\mid\Xi,\S)$ as a matrix coefficient of a composition of the row operators $\C(\kappa\mid\tau^i_\S\Xi,\tau^i_\Xi\S)$ with shifted inhomogeneities. Proposition \ref{commutation} shows that this composition is invariant under simple transpositions exchanging $\kappa_i$ and $\kappa_{i+1}$, which implies the claim.
 \end{proof}
  Note that, via Proposition \ref{FFoperatorProp} and the second relation from Proposition \ref{commutation}, the same argument can be applied to the functions $\FF_{\lambda/\mu}(u_1,\dots, u_n\mid\Xi,\S)$ to directly prove that they are symmetric.

For our next result we need dual functions defined by
\be
\FF^{*}_{\lambda'/\mu'}(u_1,\dots, u_n\mid\Xi,\S):=\frac{\cc_\S(\lambda)}{\cc_{\S}(\mu)}\FF_{\lambda'/\mu'}(u_1,\dots, u_n\mid\Xi,\S),
\ee
\begin{equation}
\label{dualFRenorm}
\FF^{\s*}_{\lambda'/\mu'}(u_1,\dots, u_n\mid\Xi,\S):=\frac{(\S)^{2\mu}}{(\S)^{2\lambda}}\frac{\cc_{\S}(\mu)}{\cc_\S(\lambda)}\FF^{\s}_{\lambda'/\mu'}(u_1,\dots, u_n\mid\Xi,\S)=\frac{(-\S)^{\mu}}{(-\S)^{\lambda}}\frac{\cc_{\S}(\mu)}{\cc_{\S}(\lambda)}\FF^{*}_{\lambda'/\mu'}(u_1,\dots, u_n\mid\Xi,\S).
\end{equation}
In view of Proposition \ref{FFoperatorProp}, we have
\begin{equation}
\label{FFDualOperator}
\FF^{\s*}_{\lambda'/\mu'}(u_1,\dots, u_n\mid\Xi,\S)=\langle\mu|\tB^*(u_n\mid\Xi,\S)\dots\tB^*(u_2\mid\Xi,\S)\tB^*(u_1\mid\Xi,\S)|\lambda\rangle.
\end{equation}

\begin{theo}
 \label{dualCauchyTheo}
 For any partitions $\mu,\nu$ the following identity holds
 \begin{multline*}
 \sum_\lambda \FF_{\lambda'/\nu'}^*(u_1,\dots, u_m\mid\Xi,\S)\F_{\lambda/\mu}(\kappa_1,\dots, \kappa_n\mid\Xi,\S)\\
 =\prod_{i=1}^n\prod_{j=1}^m\frac{1-u_j\kappa_i}{1-u_j\xi_is_i}\sum_{\lambda}\F_{\nu/\lambda}(\kappa_1,\dots, \kappa_n\mid\Xi,\S)\FF^*_{\mu'/\lambda'}(u_1,\dots, u_m\mid\tau^n_\S\Xi,\tau^n_\Xi\S),
 \end{multline*}
 with both sums above having finitely many nonzero terms.
 \end{theo}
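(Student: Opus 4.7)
The strategy is to recognize both sides as matrix coefficients of compositions of the row operators $\C$ and $\tB^*$ and to then use the exchange relation of Proposition \ref{exchange} to slide every $\tB^*$ past every $\C$. Concretely, combining Proposition \ref{Foperator} with \eqref{FFDualOperator}, and pulling the $\lambda$-independent normalization factors coming from \eqref{Frenorm} and \eqref{dualFRenorm} out of the sum, the LHS equals
\begin{equation*}
\frac{(-\tau^n_\Xi\S)^\mu}{(-\S)^\nu}\frac{\cc_{\tau^n_\Xi\S}(\mu)}{\cc_\S(\nu)}\sum_\lambda \langle\nu|\tB^*(u_m|\Xi,\S)\cdots\tB^*(u_1|\Xi,\S)|\lambda\rangle\langle\lambda|\C(\kappa_1|\tau_\S\Xi,\tau_\Xi\S)\cdots\C(\kappa_n|\tau^n_\S\Xi,\tau^n_\Xi\S)|\mu\rangle,
\end{equation*}
and the completeness relation $\sum_\lambda|\lambda\rangle\langle\lambda|=\mathrm{id}$ collapses this into a single matrix coefficient.

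Next I would apply Proposition \ref{exchange} repeatedly. Starting with the rightmost $\tB^*(u_1|\Xi,\S)$ and using the composition property of the mixed shifts in Section \ref{mixedShiftSection}, I move it through $\C(\kappa_1|\tau_\S\Xi,\tau_\Xi\S)$, then through $\C(\kappa_2|\tau^2_\S\Xi,\tau^2_\Xi\S)$, and so on. At the step where $\tB^*(u_1|\tau^{i-1}_\S\Xi,\tau^{i-1}_\Xi\S)$ crosses $\C(\kappa_i|\tau^i_\S\Xi,\tau^i_\Xi\S)$, Proposition \ref{exchange} produces the factor $\frac{1-u_1\kappa_i}{1-u_1\xi^{(i-1)}_1 s^{(i-1)}_1}=\frac{1-u_1\kappa_i}{1-u_1\xi_i s_i}$, where I used $\xi^{(i-1)}_1 s^{(i-1)}_1=\xi_i s_i$. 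Iterating for $u_1,u_2,\ldots,u_m$ in turn, the matrix coefficient becomes
\begin{equation*}
\prod_{j=1}^m\prod_{i=1}^n\frac{1-u_j\kappa_i}{1-u_j\xi_is_i}\cdot\langle\nu|\C(\kappa_1|\tau_\S\Xi,\tau_\Xi\S)\cdots\C(\kappa_n|\tau^n_\S\Xi,\tau^n_\Xi\S)\,\tB^*(u_m|\tau^n_\S\Xi,\tau^n_\Xi\S)\cdots\tB^*(u_1|\tau^n_\S\Xi,\tau^n_\Xi\S)|\mu\rangle.
\end{equation*}
Inserting $\sum_\lambda|\lambda\rangle\langle\lambda|$ between the block of $\C$'s and the block of $\tB^*$'s expresses this as $\sum_\lambda\F^\s_{\nu/\lambda}(\kappa_1,\ldots,\kappa_n|\Xi,\S)\,\FF^{\s*}_{\mu'/\lambda'}(u_1,\ldots,u_m|\tau^n_\S\Xi,\tau^n_\Xi\S)$, matching the shape of the RHS.

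The remaining work is bookkeeping: converting the $\s$-normalized functions back into $\F$ and $\FF^*$ via \eqref{Frenorm} and \eqref{dualFRenorm}. The factor $\frac{(-\tau^n_\Xi\S)^\lambda}{\cc_{\tau^n_\Xi\S}(\lambda)}$ picked up from one renormalization telescopes against its reciprocal coming from the other, so the overall $\lambda$-independent prefactor on the RHS is exactly $\frac{(-\tau^n_\Xi\S)^\mu}{(-\S)^\nu}\frac{\cc_{\tau^n_\Xi\S}(\mu)}{\cc_\S(\nu)}$, which cancels the analogous prefactor on the LHS. Finiteness on both sides follows from the vanishing conditions already recorded: $\F_{\lambda/\mu}$ requires $\mu\subset\lambda$ with $l(\lambda)\leq l(\mu)+n$, and the iterated vanishing in \eqref{Bvanishing} forces $\FF^*_{\lambda'/\nu'}$ to vanish unless $\nu\subset\lambda$ with $\lambda_1-\nu_1\leq m$; together they confine $\lambda$ to a finite box, and the analogous bounds apply to the RHS sum.

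I expect the main obstacle to be purely bookkeeping: verifying that the many factors $(-\S)^{\bullet}$ and $\cc_\bullet(\bullet)$ produced by repeated application of \eqref{Frenorm} and \eqref{dualFRenorm} at the intermediate partition $\lambda$ telescope correctly. There are no serious combinatorial or analytic hurdles once Proposition \ref{exchange} is in hand; the conceptual work was already done in establishing the deformed Yang-Baxter equation of Proposition \ref{defCauchyYBprop} which underlies that exchange relation.
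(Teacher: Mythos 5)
Your proposal is correct and follows essentially the same route as the paper: both sides are written as matrix coefficients of $\tB^*(u_m)\cdots\tB^*(u_1)\C(\kappa_1\mid\tau_\S\Xi,\tau_\Xi\S)\cdots\C(\kappa_n\mid\tau^n_\S\Xi,\tau^n_\Xi\S)$ via Proposition \ref{Foperator} and \eqref{FFDualOperator}, a completeness insertion gives one side, and iterating Proposition \ref{exchange} (with the shift composition $\xi^{(i-1)}_1 s^{(i-1)}_1=\xi_i s_i$) gives the other. The only cosmetic difference is that the paper first reduces to the $\s$-normalized identity and works with the single matrix element $\mathcal E^\nu_\mu$, whereas you carry the normalization prefactors along and cancel them at the end; your telescoping of these factors is correct and equivalent.
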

 \begin{proof}
 Note that by \eqref{Frenorm} and \eqref{dualFRenorm}, Theorem \ref{dualCauchyTheo} is equivalent to the following identity: 
  \begin{multline*}
 \sum_\lambda \FF^{\s*}_{\lambda'/\nu'}(u_1,\dots, u_m\mid\Xi,\S)\F^\s_{\lambda/\mu}(\kappa_1,\dots, \kappa_n\mid\Xi,\S)\\
 =\prod_{i=1}^n\prod_{j=1}^m\frac{1-u_j\kappa_i}{1-u_j\xi_is_i}\sum_{\lambda}\F^\s_{\nu/\lambda}(\kappa_1,\dots, \kappa_n\mid\Xi,\S)\FF^{\s*}_{\mu'/\lambda'}(u_1,\dots, u_m\mid\tau^n_\S\Xi,\tau^n_\Xi\S).
 \end{multline*}
So we will actually prove this identity instead of the identity in the theorem. The proof follows a standard outline given in \cite[Theorem 7.3]{BW17}: We rewrite the matrix element
\begin{multline*}
\mathcal E_{\mu}^\nu(\kappa_1, \dots, \kappa_n; u_1, \dots, u_m\mid\Xi,\S):=\langle \nu|\tB^*(u_m\mid\Xi,\S)\dots\tB^*(u_1\mid\Xi,\S)\\
\times \C(\kappa_1\mid\tau_\S\Xi,\tau_\Xi\S)\C(\kappa_2\mid\tau^2_\S\Xi,\tau^2_\Xi\S)\dots\C(\kappa_n\mid\tau^{n}_\S\Xi,\tau^{n}_\Xi\S)|\mu\rangle
\end{multline*}
in two different ways, corresponding to the two sides of the claim.

For the left-hand side we insert $\sum_{\lambda}|\lambda\rangle\langle\lambda|={\mathrm{id}}$ between the operators $\C$ and $\tB^*$ to get
\be
\mathcal E_{\mu}^\nu(\kappa_1, \dots, \kappa_n; u_1, \dots, u_m\mid\Xi,\S)=\sum_\lambda \FF^{\s*}_{\lambda'/\nu'}(u_1,\dots, u_m\mid\Xi,\S)\F^\s_{\lambda/\mu}(\kappa_1,\dots, \kappa_n\mid\Xi,\S),
\ee
where we have used Proposition \ref{Foperator} and relation \eqref{FFDualOperator}.

On the other hand, set 
\be
\C^{(n)}(\kappa_1,\dots,\kappa_n\mid\Xi,\S):=\C(\kappa_1\mid\tau_\S\Xi,\tau_\Xi\S)\C(\kappa_2\mid\tau^2_\S\Xi,\tau^2_\Xi\S)\dots\C(\kappa_n\mid\tau^n_\S\Xi,\tau^n_\Xi\S)
\ee
and iterate the exchange relation from Proposition \ref{exchange} to get
\be
\tB^*(u\mid\Xi,\S)\C^{(n)}(\kappa_1,\dots, \kappa_n\mid\Xi,\S)=\prod_{i=1}^n\frac{1-u\kappa_i}{1-u\xi_{i}s_{i}}\, \C^{(n)}(\kappa_1,\dots,\kappa_n\mid\Xi,\S)\tB^*(u\mid\tau^n_\S\Xi,\tau^n_\Xi\S).
\ee
Hence,
\begin{multline*}
\mathcal E_{\mu}^\nu(\kappa_1, \dots, \kappa_n; u_1, \dots, u_m\mid\Xi,\S)=\prod_{i=1}^n\prod_{j=1}^m\frac{1-u_j\kappa_i}{1-u_j\xi_is_i}\\
\times\langle \nu|\C^{(n)}(\kappa_1,\dots, \kappa_n\mid\Xi,\S)\tB^*(u_m\mid\tau^n_\S\Xi,\tau^n_\Xi\S)\dots\tB^*(u_1\mid\tau^n_\S\Xi,\tau^n_\Xi\S)|\mu\rangle.
\end{multline*}
Now we can again insert $\sum_{\lambda}|\lambda\rangle\langle\lambda|=\mathrm{id}$ and obtain
\begin{multline*}
\mathcal E_{\mu}^\nu(\kappa_1, \dots, \kappa_n; u_1, \dots, u_m\mid\Xi,\S)\\
=\prod_{i=1}^n\prod_{j=1}^m\frac{1-u_j\kappa_i}{1-u_j\xi_is_i}\sum_{\lambda}\F^{\s}_{\nu/\lambda}(\kappa_1,\dots, \kappa_n\mid\Xi,\S)\FF^{\s*}_{\mu'/\lambda'}(u_1,\dots, u_m\mid\tau^n_\S\Xi,\tau^n_\Xi\S).
\end{multline*}
 \end{proof}

By setting $\mu=\varnothing$ or $(\mu,\nu)=(\varnothing, \varnothing)$ and noting that
\be
\F_{\varnothing/\lambda}(\kappa_1,\dots,\kappa_n\mid\Xi,\S)=\FF_{\varnothing/\lambda}(u_1,\dots,u_m\mid\Xi,\S)=\1_{\lambda=\varnothing},
\ee
Theorem \ref{dualCauchyTheo} immediately leads to the following results: 
\begin{cor}[Pieri rule] For any partition $\nu$ we have
\be
\prod_{i=1}^n\prod_{j=1}^m\frac{1-u_j\kappa_i}{1-u_j\xi_is_i}\, \F_{\nu}(\kappa_1,\dots, \kappa_n\mid\Xi,\S)=\sum_\lambda \FF_{\lambda'/\nu'}^*(u_1,\dots, u_m\mid\Xi,\S)\F_{\lambda}(\kappa_1,\dots, \kappa_n\mid\Xi,\S).
 \ee
\end{cor}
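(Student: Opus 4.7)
The plan is to obtain this Pieri rule as an immediate specialization of Theorem \ref{dualCauchyTheo}. Specifically, I would set $\mu = \varnothing$ in the skew dual Cauchy identity and show that the sum on the right-hand side collapses to a single term.

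First, I would substitute $\mu = \varnothing$ into the Cauchy identity of Theorem \ref{dualCauchyTheo}. The left-hand side becomes $\sum_\lambda \FF^*_{\lambda'/\nu'}(u_1,\dots,u_m\mid\Xi,\S)\,\F_{\lambda/\varnothing}(\kappa_1,\dots,\kappa_n\mid\Xi,\S)$, which by the definition $\F_\lambda := \F_{\lambda/\varnothing}$ is exactly the right-hand side of the Pieri rule (up to the prefactor, which stays on the other side). The right-hand side becomes
\be
\prod_{i=1}^n\prod_{j=1}^m\frac{1-u_j\kappa_i}{1-u_j\xi_is_i}\sum_{\lambda}\F_{\nu/\lambda}(\kappa_1,\dots,\kappa_n\mid\Xi,\S)\,\FF^*_{\varnothing/\lambda'}(u_1,\dots,u_m\mid\tau^n_\S\Xi,\tau^n_\Xi\S).
\ee

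The key step is then to observe that $\FF^*_{\varnothing/\lambda'}(u_1,\dots,u_m\mid\tau^n_\S\Xi,\tau^n_\Xi\S) = \1_{\lambda = \varnothing}$. This follows from the partition function definition in Section \ref{HLsect}: the bottom boundary labels in Figure \ref{partitionFunctionw} would be the multiplicities $m_j((\lambda')')= m_j(\lambda)$, while the top boundary labels are $m_j(\varnothing') = 0$, so the conservation law along every column forces all horizontal labels to vanish, hence $\lambda = \varnothing$. Equivalently, this is the already-noted identity $\FF_{\varnothing/\lambda}(u_1,\dots,u_m\mid\Xi,\S)=\1_{\lambda=\varnothing}$ combined with the definition $\FF^*_{\lambda'/\mu'} = \frac{\cc_\S(\lambda)}{\cc_\S(\mu)}\FF_{\lambda'/\mu'}$, which at $\mu = \varnothing$ and $\lambda = \varnothing$ gives the value $1$.

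Consequently, only the term $\lambda = \varnothing$ survives on the right-hand side, leaving $\F_{\nu/\varnothing}(\kappa_1,\dots,\kappa_n\mid\Xi,\S) = \F_\nu(\kappa_1,\dots,\kappa_n\mid\Xi,\S)$ multiplied by the product prefactor. Equating the two sides then yields the desired Pieri rule. There is no real obstacle here: the entire content of the corollary is packaged in Theorem \ref{dualCauchyTheo}, and the only verification needed is the vanishing of $\FF^*_{\varnothing/\lambda'}$ for $\lambda\ne\varnothing$, which is immediate from the vertex-model definition.
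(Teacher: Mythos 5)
Your proof is correct and is precisely the paper's own argument: the corollary is obtained by setting $\mu=\varnothing$ in Theorem \ref{dualCauchyTheo} and using $\FF^*_{\varnothing/\lambda'}=\1_{\lambda=\varnothing}$ to collapse the sum on the right-hand side to the single term $\lambda=\varnothing$. (One minor indexing slip: for $\FF_{\varnothing/\lambda'}$ the bottom boundary labels in Figure \ref{partitionFunctionw} are $m_j(\lambda')=\lambda_j-\lambda_{j+1}$ rather than $m_j(\lambda)$, but this does not affect the conservation-law argument, which still forces $\lambda=\varnothing$.)
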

\begin{cor}[Dual Cauchy identity]\label{dualCauchyCor} We have
 \be
 \sum_\lambda \FF_{\lambda'}^*(u_1,\dots, u_m\mid\Xi,\S)\F_{\lambda}(\kappa_1,\dots, \kappa_n\mid\Xi,\S)=\prod_{i=1}^n\prod_{j=1}^m\frac{1-u_j\kappa_i}{1-u_j\xi_is_i}.
 \ee
\end{cor}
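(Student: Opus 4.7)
The plan is to obtain this identity as an immediate specialization of the skew dual Cauchy identity of Theorem \ref{dualCauchyTheo} to the case $\mu = \nu = \varnothing$. On the left-hand side, the conventions $\FF^*_{\lambda'/\varnothing'} = \FF^*_{\lambda'}$ and $\F_{\lambda/\varnothing} = \F_{\lambda}$ adopted in Section \ref{sqWhitSect} give exactly the sum appearing in the corollary, so no work is required there.

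The substantive (but still easy) point is to see that the right-hand sum collapses to a single term. This relies on two observations already available in the text. First, by the Corollary following Proposition \ref{rowValue}, $\F_{\nu/\lambda}(\kappa_1,\dots,\kappa_n\mid\Xi,\S)$ vanishes unless $\lambda\subset\nu$; analogously, iterating \eqref{Bvanishing} and using \eqref{FFDualOperator} shows that $\FF^*_{\mu'/\lambda'}$ vanishes unless $\lambda\subset\mu$. Taking $\mu = \nu = \varnothing$ thus forces $\lambda = \varnothing$. Second, one needs $\F_{\varnothing/\varnothing} = \FF^*_{\varnothing/\varnothing} = 1$: by Proposition \ref{Foperator} and \eqref{FFDualOperator} these equal $\langle\varnothing|\C(\kappa_1\mid\cdots)\cdots\C(\kappa_n\mid\cdots)|\varnothing\rangle$ and $\langle\varnothing|\tB^*(u_m\mid\cdots)\cdots\tB^*(u_1\mid\cdots)|\varnothing\rangle$ respectively, and in each case the defining semi-infinite row has every vertical edge labeled $0$, which by the conservation law forces every horizontal edge to be $0$ as well, so every vertex contributes weight $1$.

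There is essentially no obstacle; the entire analytic content is already encoded in Theorem \ref{dualCauchyTheo}. A self-contained alternative, which would simply repeat the proof of that theorem in the boundary case, is to compute
\[
\langle\varnothing|\tB^*(u_m\mid\Xi,\S)\cdots\tB^*(u_1\mid\Xi,\S)\,\C(\kappa_1\mid\tau_\S\Xi,\tau_\Xi\S)\cdots\C(\kappa_n\mid\tau^n_\S\Xi,\tau^n_\Xi\S)|\varnothing\rangle
\]
in two ways: inserting a resolution of the identity between the $\tB^*$'s and $\C$'s yields the left-hand side of the corollary, while iterating the exchange relation of Proposition \ref{exchange} $mn$ times to push each $\tB^*$ past each $\C$ produces the prefactor $\prod_{i,j}(1-u_j\kappa_i)/(1-u_j\xi_i s_i)$ multiplied by $\langle\varnothing|\C^{(n)}(\kappa\mid\cdots)\tB^*(u_m\mid\tau^n_\S\Xi,\tau^n_\Xi\S)\cdots\tB^*(u_1\mid\tau^n_\S\Xi,\tau^n_\Xi\S)|\varnothing\rangle$, which by the collapsing argument above equals that prefactor itself.
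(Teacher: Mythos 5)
Your proposal is correct and follows essentially the same route as the paper: the paper likewise specializes Theorem \ref{dualCauchyTheo} to $\mu=\nu=\varnothing$ and collapses the right-hand sum via $\F_{\varnothing/\lambda}=\FF_{\varnothing/\lambda}=\1_{\lambda=\varnothing}$, which packages your vanishing-plus-normalization argument into one line. The alternative self-contained computation you sketch is just the proof of Theorem \ref{dualCauchyTheo} restricted to the boundary case, so it adds nothing new.
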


\subsection{Integral representation} We can use the dual Cauchy identity of Corollary \ref{dualCauchyCor} and previously known results about inhomogeneous (stable) spin Hall-Littlewood functions to obtain a multivariate integral representation for the inhomogeneous spin $q$-Whittaker functions.

To formulate the result, let $0<|q|<1$  and assume that the variables $\kappa_1,\dots,\kappa_n$ and the parameters $\Xi=(\xi_0,\xi_1, \dots), \S=(s_0, s_1,\dots)$ are complex numbers such that there exists a positively-oriented simple contour $\mathcal C$ on the complex plane satisfying the following conditions:
\begin{itemize}
\item all points $\{0\}$ and $\{\xi_i^{-1}s_i\}_{i=1}^\infty$ are inside the contour $\mathcal C$;
\item all points $\{\xi_i^{-1}s_i^{-1}\}_{i=1}^\infty$ are outside of the contour $\mathcal C$;
\item the image $q\mathcal C$ of the contour $\mathcal C$ under the multiplication by $q$ is inside $\mathcal C$.
\end{itemize}
An example of such a configuration is sketched in Figure \ref{Cpict}.

\begin{figure}
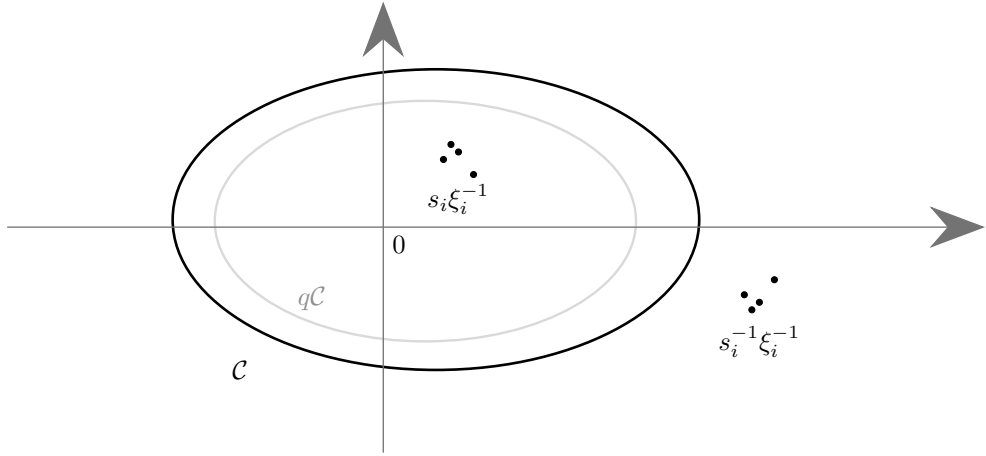

\tikz{1}{
\draw[line width=1pt, draw=black] (0.7,0.1) ellipse (3.5cm and 2cm);
\draw[line width=0.9pt, draw=lgray] (0.56,0.08) ellipse (2.8cm and 1.6cm);
\draw[line width=0.5pt, draw=dgray, arrows={->[scale=4]}] (-5,0) -- (8,0);
\draw[line width=0.5pt, draw=dgray, arrows={->[scale=4]}] (0,-3) -- (0,3);
\foreach \point in {(1,1), (0.9,1.1), (0.8,0.9), (1.2, 0.7)}
\draw[fill=black] \point circle (0.04);
\node[below] at (1,0.7) {$s_i\xi^{-1}_i$};
\foreach \point in {(5,-1), (4.9,-1.1), (4.8,-0.9), (5.2, -0.7)}
\draw[fill=black] \point circle (0.04);
\node[below] at (5,-1.2) {$s^{-1}_i\xi^{-1}_i$};
\node[] at (-1.9,-1.9) {$\mathcal C$};
\node[below right] at (0,0) {$0$};
\node[text=dgray!80] at (-0.95,-0.95) {$q\mathcal C$};
}
\caption{\label{Cpict}A possible of configuration of $\{\xi_i\s_i\}_{i=1}^\infty$, $\{\xi_i^{-1}\s_i\}_{i=1}^\infty$ and $\mathcal C$ for Theorem \ref{integral}.}
\end{figure}

\begin{theo}\label{integral}
Under the above assumptions, we have
\begin{multline*}
\F_{\mu}(\kappa_1,\dots, \kappa_n\mid\Xi,\S)\\
=\oint_{\mathcal C}\frac{dz_1}{2\pi\i z_1}\dots\oint_{\mathcal C}\frac{dz_k}{2\pi\i z_k}\prod_{\alpha<\beta}\frac{z_\alpha-z_\beta}{z_\alpha-qz_\beta}\prod_{\alpha=1}^k\(\frac{\xi_0^{-1}}{z_\alpha-\xi^{-1}_{\mu'_\alpha}s_{\mu'_\alpha}}\prod_{j=1}^{\mu'_\alpha-1}\frac{1-s_j\xi_jz_\alpha}{z_\alpha\xi_j-s_j}\prod_{i=1}^n\frac{1-z_\alpha\kappa_i}{1-z_\alpha\xi_is_i}\),
\end{multline*}
where $k=\mu_1$.
\end{theo}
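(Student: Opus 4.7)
The idea is to combine the dual Cauchy identity (Corollary~\ref{dualCauchyCor}) with a biorthogonality property of the inhomogeneous stable spin Hall-Littlewood functions. Observe that the $\kappa$-dependent factor $\prod_{i,\alpha}(1-z_\alpha\kappa_i)/(1-z_\alpha\xi_is_i)$ of the integrand is precisely the right-hand side of Corollary~\ref{dualCauchyCor} with $m=k=\mu_1$ and $u_j$ replaced by $z_j$. Substituting
\begin{equation*}
\prod_{i=1}^n\prod_{\alpha=1}^k\frac{1-z_\alpha\kappa_i}{1-z_\alpha\xi_is_i}=\sum_{\lambda}\FF^{*}_{\lambda'}(z_1,\dots,z_k\mid\Xi,\S)\,\F_\lambda(\kappa_1,\dots,\kappa_n\mid\Xi,\S)
\end{equation*}
into the integrand and interchanging the (finite) sum with the integral reduces the theorem to the biorthogonality identity
\begin{equation*}
\oint_{\mathcal C}\frac{dz_1}{2\pi\i z_1}\cdots\oint_{\mathcal C}\frac{dz_k}{2\pi\i z_k}\prod_{\alpha<\beta}\frac{z_\alpha-z_\beta}{z_\alpha-qz_\beta}\prod_{\alpha=1}^k K_{\mu'_\alpha}(z_\alpha)\,\FF^{*}_{\lambda'}(z_1,\dots,z_k\mid\Xi,\S)=\1_{\lambda=\mu},
\end{equation*}
where $K_j(z):=\dfrac{\xi_0^{-1}}{z-\xi_j^{-1}s_j}\prod_{l=1}^{j-1}\dfrac{1-s_l\xi_l z}{z\xi_l-s_l}$.

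To prove this orthogonality, I would substitute the explicit symmetrization formula for $\FF_{\lambda'}$ recalled at the end of Section~\ref{HLsect} (converted to $\FF^{*}$ via~\eqref{dualFRenorm}). The Macdonald-type kernel $\prod_{\alpha<\beta}(z_\alpha-z_\beta)/(z_\alpha-qz_\beta)$ together with the $S_k$-symmetrization inside $\FF^{*}_{\lambda'}$ collapses the sum over $\sigma\in S_k$ to $k!$ identical copies of a single representative, by the standard trick used for Hall-Littlewood-type functions. The resulting $k$-fold integral is then evaluated by iterated residues inside $\mathcal C$: the contour assumption $q\mathcal C\subset\mathcal C$ guarantees that the denominators $z_\alpha-qz_\beta$ do not contribute residues, so the only poles that matter are the simple poles at $z_\alpha\in\{s_r/\xi_r\}_{r\ge 1}$. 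Matching the pole locations forced by each $K_{\mu'_\alpha}$ against those allowed by the product $\prod_\alpha\widetilde{\varphi}_{\lambda'_\alpha}(z_\alpha)$ in the symmetrized form of $\FF^{*}_{\lambda'}$ forces $\lambda=\mu$, and the residues combine with the prefactors from~\eqref{dualFRenorm}, from the $(1-q)\xi_0$ coefficients of the $\widetilde{\varphi}$'s, and from the residues of $K$ to give exactly $1$.

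The main obstacle will be this final residue bookkeeping. In spirit, this is an inhomogeneous version of the Plancherel-type computations carried out for stable spin Hall-Littlewood functions in~\cite{BCPS13,BCPS14,BP16b}; the computation can alternatively be imported from~\cite{BP16b} by matching our conventions with theirs via Remark~\ref{renorm} and the discussion in Section~\ref{HLsect} identifying $\FF_{\lambda'/\mu'}$ with a specialization of the inhomogeneous spin Hall-Littlewood functions of \cite{BP16b}. The three contour conditions in the statement ($\mathcal C$ encloses $0$ and $\{\xi_i^{-1}s_i\}$, excludes $\{\xi_i^{-1}s_i^{-1}\}$, and contains $q\mathcal C$) are precisely what is needed to make the nested residue extraction and the swap of contour integration with the $S_k$-symmetrization well-defined.
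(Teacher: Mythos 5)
Your proposal is correct and is essentially the paper's argument read in the opposite direction: the paper starts from the orthogonality relation \eqref{FFortho} for the inhomogeneous stable spin Hall--Littlewood functions (imported directly from \cite[Theorem 7.4]{BP16b} via the identification of $\FF$ with the functions ${\sf F}$ in Section \ref{HLsect}), conjugates partitions, multiplies by $\F_\lambda$, sums over $\lambda$, and collapses the sum with the dual Cauchy identity of Corollary \ref{dualCauchyCor}, whereas you expand the Cauchy kernel first and reduce to exactly that same biorthogonality statement (your $K_j(z)/z$ is $(1-q)^{-1}z^{-1}\widetilde{\varphi}_j(z^{-1}\mid\overline{\Xi},\S)$). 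The residue bookkeeping you flag as the main obstacle is sidestepped in the paper by simply citing the orthogonality from \cite{BP16b}, which is precisely the fallback you propose.
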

\begin{proof} We will use the orthogonality of $\FF_{\lambda}$, \emph{cf.} \cite[Theorem 7.4]{BP16b}: for any pair of partitions $\lambda,\mu$ such that $l(\mu)\leq L$ we have
\begin{equation}
\label{FFortho}
\frac{(q;q)_{L-l(\mu)}\cc_{\S}(\lambda)}{(1-q)^L}\oint_{\mathcal C}\frac{dz_1}{2\pi\i}\dots\oint_{\mathcal C}\frac{dz_L}{2\pi\i}\prod_{\alpha<\beta}\frac{z_\alpha-z_\beta}{z_\alpha-qz_\beta}\FF_{\lambda}(z_1,\dots, z_L\mid\Xi,\S)\prod_{\alpha=1}^Lz_\alpha^{-1}\widetilde{\varphi}_{\mu_\alpha}(z_\alpha^{-1}\mid\overline{\Xi}, \S)=\1_{\lambda=\mu},
\end{equation}
where 
\be
\overline{\Xi}=(\xi_0^{-1},\xi_1^{-1},\xi_2^{-1},\dots)
\ee
and the functions $\widetilde{\varphi}_k(u\mid\Xi,\S)$ were defined at the end of Section \ref{HLsect}.

Fixing a partition $\mu$, conjugating partitions in \eqref{FFortho}, setting $L=k=\mu_1=l(\mu')$ and multiplying by $\F_{\lambda}(\kappa_1,\dots, \kappa_n\mid\Xi,\S)$, we get
\begin{multline*}
\frac{1}{(1-q)^k}\oint_{\mathcal C}\frac{dz_1}{2\pi\i}\dots\oint_{\mathcal C}\frac{dz_k}{2\pi\i}\prod_{\alpha<\beta}\frac{z_\alpha-z_\beta}{z_\alpha-qz_\beta}\FF^*_{\lambda'}(z_1,\dots, z_k\mid\Xi,\S)\F_{\lambda}(\kappa_1,\dots, \kappa_n\mid\Xi,\S)\prod_{\alpha=1}^kz_\alpha^{-1}\widetilde{\varphi}_{\mu'_\alpha}(z_\alpha^{-1}\mid\overline{\Xi}, \S)\\
=\1_{\lambda=\mu}\cdot\F_{\lambda}(\kappa_1,\dots, \kappa_n\mid\Xi,\S).
\end{multline*}
Summing over $\lambda$ and using Corollary \ref{dualCauchyCor}, we arrive at the integral representation for $\F_{\mu}(\kappa_1,\dots, \kappa_n\mid\Xi,\S)$.
\end{proof}

\section{A Cauchy identity}\label{cauchySection}
In this section we use fusion of $\tB^*$-operators to get a Cauchy-like identity between inhomogeneous spin $q$-Whittaker functions $\F_{\lambda/\mu}$ similar to \cite[Corollary 4.10]{BP16b} and \cite[Theorem 7.1]{BW17}. However, during the proof we will lose the degrees of freedom corresponding to the parameters $\Xi$, proving an identity involving only functions with $\Xi=\S$ and $\Xi=\overline{\S}$. Recall that for a sequence $\S=(s_0,s_1,\dots)$ we set
\be
\overline{\S}:=(s_0^{-1},s_1^{-1},s_2^{-1},\dots).
\ee

\subsection{$\Xi=\S$ and $\Xi=\bar\S$ specializations.} We start with the description of a simplification of functions $\F_{\lambda/\mu}(\kappa_1,\dots, \kappa_n|\Xi,\S)$ when $\Xi=\S$ and $\Xi=\overline\S$. In these two cases, the parameters of the vertex weights in the partition function $\ZW_{\lambda/\mu}^{(a_1, \dots, a_n)}(\kappa_1,\dots, \kappa_n\mid\Xi,\S)$ degenerate to $(\frac{s_{i+j}}{\sqrt{\kappa_i}}, s_{i+j})$ for $\Xi=\S$, and to $(\kappa_i^{-\frac{1}{2}},s_j)$ for $\Xi=\bar\S$, where $i$ and $j$ denote the row and the column of the corresponding vertex. These parameters are depicted in Figure \ref{partitionFunctionSpec}.

\begin{figure}
\tikz{1}{
	\foreach\y in {2,...,5}{
		\draw[fused*] (1,\y) -- (5.5,\y);
		\node[right] at (5.5,\y) {$\dots$};
		\draw[fused] (6.1,\y) -- (7,\y);
	}
	\foreach\x in {1,...,2}{
		\draw[fused] (2*\x,1) -- (2*\x,6);
	}
	\node[above right] at (2,5) {\tiny $\(\frac{s_2}{\sqrt{\kappa_1}}, s_2\)$};
	\node[above right] at (4,5) {\tiny $\(\frac{s_3}{\sqrt{\kappa_1}}, s_3\)$};
	
	\node[above right] at (2,4) {\tiny $\(\frac{s_3}{\sqrt{\kappa_2}}, s_3\)$};
	\node[above right] at (4,4) {\tiny $\(\frac{s_4}{\sqrt{\kappa_2}}, s_4\)$};
	
	\node[above right] at (2,2) {\tiny $\(\frac{s_{n+1}}{\sqrt{\kappa_n}}, s_{n+1}\)$};
	\node[above right] at (4,2) {\tiny $\(\frac{s_{n+2}}{\sqrt{\kappa_n}}, s_{n+2}\)$};
	\node[above] at (5.5,6) {$\cdots$};
	\node[above] at (4,6) {$m_2(\lambda')$};
	\node[above] at (2,6) {$m_1(\lambda')$};
	\node[left] at (1,2) {$a_n$};
	\node[left] at (1,3) {$\vdots$};
	\node[left] at (1,4) {$a_2$};
	\node[left] at (1,5) {$a_1$};
	\node[below] at (5.5,1) {$\cdots$};
	\node[below] at (4,1) {$m_2(\mu')$};
	\node[below] at (2,1) {$m_1(\mu')$};
	\node[right] at (7,2) {$0$};
	\node[right] at (7,3) {$\vdots$};
	\node[right] at (7,4) {$0$};
	\node[right] at (7,5) {$0$};
}
\hspace{1cm}
\tikz{1}{
	\foreach\y in {2,...,5}{
		\draw[fused*] (1,\y) -- (5.5,\y);
		\node[right] at (5.5,\y) {$\dots$};
		\draw[fused] (6.1,\y) -- (7,\y);
	}
	\foreach\x in {1,...,2}{
		\draw[fused] (2*\x,1) -- (2*\x,6);
	}
	\node[above right] at (2,5) {\tiny $\(\kappa_1^{-1/2}, s_1\)$};
	\node[above right] at (4,5) {\tiny $\(\kappa_1^{-1/2}, s_2\)$};
	
	\node[above right] at (2,4) {\tiny $\(\kappa_2^{-1/2}, s_1\)$};
	\node[above right] at (4,4) {\tiny $\(\kappa_2^{-1/2}, s_2\)$};
	
	\node[above right] at (2,2) {\tiny $\(\kappa_n^{-1/2}, s_1\)$};
	\node[above right] at (4,2) {\tiny $\(\kappa_n^{-1/2}, s_2\)$};
	\node[above] at (5.5,6) {$\cdots$};
	\node[above] at (4,6) {$m_2(\lambda')$};
	\node[above] at (2,6) {$m_1(\lambda')$};
	\node[left] at (1,2) {$a_n$};
	\node[left] at (1,3) {$\vdots$};
	\node[left] at (1,4) {$a_2$};
	\node[left] at (1,5) {$a_1$};
	\node[below] at (5.5,1) {$\cdots$};
	\node[below] at (4,1) {$m_2(\mu')$};
	\node[below] at (2,1) {$m_1(\mu')$};
	\node[right] at (7,2) {$0$};
	\node[right] at (7,3) {$\vdots$};
	\node[right] at (7,4) {$0$};
	\node[right] at (7,5) {$0$};
}
\caption{\label{partitionFunctionSpec} Partition functions $\ZW^{(a_1,\dots, a_n)}_{\lambda/\mu}(\kappa_1,\dots, \kappa_n\mid\S,\S)$ and $\ZW^{(a_1,\dots, a_n)}_{\lambda/\mu}(\kappa_1,\dots, \kappa_n\mid\bar\S,\S)$}
\end{figure}

Note that the action of the mixed shift operators $\tau_\Xi$ and $\tau_\S$ also simplifies:
\be
\tau_{\S}\S=\tau \S,\qquad \tau_{\S}\bar\S=\bar \S, \qquad \tau_{\,\overline{\S}}\S=\S,
\ee
where $\tau$ denotes the ordinary shift operator
\be
\tau^i\S:=(s_i, s_{i+1}, s_{i+2},\dots),\quad i\geq 0.
\ee
Then the functions $\F_{\lambda/\mu}(\kappa_1, \dots, \kappa_n\mid\Xi,\S)$ simplify to
\begin{multline*}
\F_{\lambda/\mu}(\kappa_1, \dots, \kappa_n\mid\S,\S)=\frac{(-\tau^n\S)^{\mu}}{(-\S)^{\lambda}}\frac{\cc_{\tau^n\S}(\mu)}{\cc_\S(\lambda)}\F^{\s}_{\lambda/\mu}(\kappa_1,\dots, \kappa_n\mid\S,\S)\\
=\frac{(-\tau^n\S)^{\mu}}{(-\S)^{\lambda}}\frac{\cc_{\tau^n\S}(\mu)}{\cc_\S(\lambda)}\sum_{a_1,\dots, a_n}\(\prod_{i=1}^n\kappa_i^{a_i}\frac{(s^2_i/\kappa_i;q)_{a_i}}{(q;q)_{a_i}}\)\ZW_{\lambda/\mu}^{(a_1,\dots, a_n)}(\kappa_1,\dots,\kappa_n\mid\S,\S).
\end{multline*}
\begin{multline*}
\F_{\lambda/\mu}(\kappa_1, \dots, \kappa_n\mid\bar\S,\S)=\frac{(-\S)^{\mu}}{(-\S)^{\lambda}}\frac{\cc_{\S}(\mu)}{\cc_\S(\lambda)}\F^{\s}_{\lambda/\mu}(\kappa_1,\dots, \kappa_n\mid\bar\S,\S)\\
=\frac{(-\S)^{\mu}}{(-\S)^{\lambda}}\frac{\cc_{\S}(\mu)}{\cc_\S(\lambda)}\sum_{a_1,\dots, a_n}\(\prod_{i=1}^n\(\kappa_is_0^2\)^{a_i}\frac{(\kappa_i^{-1};q)_{a_i}}{(q;q)_{a_i}}\)\ZW_{\lambda/\mu}^{(a_1,\dots, a_n)}(\kappa_1,\dots,\kappa_n\mid\bar\S,\S).
\end{multline*}

\subsection{Cauchy identity.} For any pair of partitions $\lambda,\mu$ define \emph{dual inhomogeneous spin $q$-Whittaker} polynomials by (\emph{cf.} \eqref{Frenorm})
\be
\F^{*}_{\lambda/\mu}(\kappa_1,\dots, \kappa_n\mid\Xi,\S):=\frac{\cc_\S(\lambda)}{\cc_{\tau^n_\Xi\S}(\mu)}\F_{\lambda/\mu}(\kappa_1,\dots, \kappa_n\mid\Xi,\S).
\ee
Additionally, define
\be
\F^{\s*}_{\lambda/\mu}(\kappa_1,\dots, \kappa_n\mid\Xi,\S):=\frac{(\tau^n_\Xi\S)^{2\mu}}{(\S)^{2\lambda}}\frac{\cc_{\tau^n_\Xi\S}(\mu)}{\cc_\S(\lambda)}\F^{\s}_{\lambda/\mu}(\kappa_1,\dots, \kappa_n\mid\Xi,\S)=\frac{(-\tau^n_\Xi\S)^{\mu}}{(-\S)^{\lambda}}\frac{\cc_{\tau^n_\Xi\S}(\mu)}{\cc_{\S}(\lambda)}\F^{*}_{\lambda/\mu}(\kappa_1,\dots, \kappa_n\mid\Xi,\S).
\ee
In view of Proposition \ref{Foperator}, we have
\be
\F^{\s*}_{\lambda/\mu}(\kappa_1, \dots, \kappa_n\mid\Xi,\S)=\langle\mu|\B^*(\kappa_n\mid\tau^{n-1}_{\S}\Xi,\tau^{n-1}_{\Xi}\S)\dots\B^*(\kappa_2\mid\tau_{\S}\Xi,\tau_{\Xi}\S)\B^*(\kappa_1\mid\Xi,\S)|\lambda\rangle.
\ee

\begin{theo} \label{Cauchy}The following identity holds in the space of formal power series in $\kappa_1,\dots, \kappa_n, \chi_1,\dots, \chi_m,q$ with rational in $\S$ coefficients:
\begin{multline}
\label{CauchyIdentity}
\sum_\lambda\F^*_{\lambda/\nu}(\chi_1,\dots, \chi_m\mid\bar\S,\S)\F_{\lambda/\mu}(\kappa_1,\dots, \kappa_n\mid\S,\S)\\
=\prod_{i=1}^n\prod_{j=1}^m\frac{(\kappa_i;q)_\infty(s_{i}^2\chi_j;q)_\infty}{(s_{i}^2;q)_\infty(\kappa_i\chi_j;q)_\infty}\sum_\lambda\F_{\nu/\lambda}(\kappa_1,\dots,\kappa_n\mid\S,\S)\F^*_{\mu/\lambda}(\chi_1,\dots,\chi_m\mid\tau^n\bar\S,\tau^n\S).
\end{multline}
\end{theo}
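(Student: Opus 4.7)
The plan is to recast \eqref{CauchyIdentity} as an operator identity via the row operator representations of Section \ref{rowOperatorsSection}, establish it first at the specializations $\chi_j = q^{J_j}$, $J_j\in\Z_{\geq 1}$, using the fusion identity of Proposition \ref{Bfusion} together with the exchange relation of Proposition \ref{exchange}, and then extend to generic $\chi_j$ as formal power series.

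Using the definitions of $\F, \F^*, \F^\s, \F^{\s*}$ together with the identities $\tau_\S\S = \tau\S$ and $\tau^m_{\bar\S}\S = \S$, a direct computation shows that on both sides of \eqref{CauchyIdentity} the prefactor relating $\F,\F^*$ to $\F^\s,\F^{\s*}$ is the same $\lambda$-independent quantity $(-\S)^\nu\cc_\S(\nu)/((-\tau^n\S)^\mu\cc_{\tau^n\S}(\mu))$, so that \eqref{CauchyIdentity} is equivalent to evaluating the matrix element
\be
\mathcal{E} := \langle\nu|\B^*(\chi_m\mid\bar\S,\S)\cdots\B^*(\chi_1\mid\bar\S,\S)\,\C(\kappa_1\mid\tau\S,\tau\S)\cdots\C(\kappa_n\mid\tau^n\S,\tau^n\S)|\mu\rangle
\ee
in two ways. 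Inserting $\sum_\lambda|\lambda\rangle\langle\lambda|$ between the $\B^*$-block and the $\C$-block, together with Proposition \ref{Foperator}, yields the LHS of \eqref{CauchyIdentity}. For the RHS one needs the operator identity
\begin{multline*}
\B^*(\chi\mid\bar\S,\S)\,\C(\kappa_1\mid\tau\S,\tau\S)\cdots\C(\kappa_n\mid\tau^n\S,\tau^n\S)\\
=\prod_{i=1}^n\frac{(\kappa_i;q)_\infty(s_i^2\chi;q)_\infty}{(s_i^2;q)_\infty(\kappa_i\chi;q)_\infty}\,\C(\kappa_1\mid\tau\S,\tau\S)\cdots\C(\kappa_n\mid\tau^n\S,\tau^n\S)\,\B^*(\chi\mid\tau^n\bar\S,\tau^n\S),
\end{multline*}
iterated over the $m$ $\B^*$-factors (commuting among themselves via the $\tB^*$-commutativity of Proposition \ref{commutation} after fusion), followed by a resolution of identity.

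To establish this at $\chi=q^J$, use Proposition \ref{Bfusion} to replace $\B^*(q^J\mid\bar\S,\S)$ by the finite product $\tB^*(1\mid\S,\S)\tB^*(q\mid\S,\S)\cdots\tB^*(q^{J-1}\mid\S,\S)$ of operators in $\widetilde{\End}(\V)$, and then apply Proposition \ref{exchange} iteratively; for $\Xi=\S$ it reads
\be
\tB^*(u\mid\tau^{i-1}\S,\tau^{i-1}\S)\,\C(\kappa_i\mid\tau^i\S,\tau^i\S) = \frac{1-u\kappa_i}{1-us_i^2}\,\C(\kappa_i\mid\tau^i\S,\tau^i\S)\,\tB^*(u\mid\tau^i\S,\tau^i\S).
\ee
Pushing each $\tB^*(q^r\mid\S,\S)$ through all $n$ copies of $\C$ and telescoping over $r\in\{0,\ldots,J-1\}$ and $i\in\{1,\ldots,n\}$ produces the factor $\prod_{i=1}^n(\kappa_i;q)_J/(s_i^2;q)_J$, which equals the claimed infinite-Pochhammer expression after applying $(x;q)_J=(x;q)_\infty/(xq^J;q)_\infty$ with $q^J=\chi$. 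Refusing the surviving $\tB^*$-operators via Proposition \ref{Bfusion} applied to the shifted sequence $\tau^n\S$ repackages them as $\B^*(q^J\mid\tau^n\bar\S,\tau^n\S)$, establishing \eqref{CauchyIdentity} whenever each $\chi_j$ belongs to $\{1,q,q^2,\ldots\}$.

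The main obstacle is passing from $q$-integer specializations to generic $\chi_j$: the operator $\B^*(\chi\mid\bar\S,\S)$ is a genuine infinite sum, so the composition $\B^*\cdot\C$ need not live in $\widetilde{\End}(\V)$ for arbitrary $\chi$, and the operator identity above does not hold unconditionally. The resolution is to read both sides of \eqref{CauchyIdentity} directly in the formal power series ring in $\kappa_1,\ldots,\kappa_n,\chi_1,\ldots,\chi_m,q$ with coefficients rational in $\S$: the LHS is a coefficient-wise finite sum over $\lambda$, since the interlacing and total-degree constraints built into $\F_{\lambda/\mu}$ and $\F^*_{\lambda/\nu}$ bound $\lambda_1$ and $l(\lambda)$ in terms of $\mu,\nu$ and the degree in the $\kappa,\chi$ variables, while the RHS expands via the $q$-binomial theorem. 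Because the specializations $\chi_j=q^{J_j}$ with $J_j\to\infty$ accumulate at the basepoint $\chi_j=0$ of the formal power series, coefficient-wise agreement on this infinite sequence forces equality as formal power series, completing the proof.
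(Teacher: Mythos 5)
Your proposal is correct and follows essentially the same route as the paper's proof: reduce to the stochastic versions $\F^\s,\F^{\s*}$, realize both sides as the matrix element $\langle\nu|\B^*(\chi_m)\cdots\B^*(\chi_1)\C(\kappa_1)\cdots\C(\kappa_n)|\mu\rangle$, prove the exchange identity at $\chi=q^J$ by combining Proposition \ref{Bfusion} with an iterated Proposition \ref{exchange}, and then pass to generic $\chi_j$ via an order estimate on $\F_{\lambda/\mu}$ guaranteeing formal-power-series convergence together with the standard fact (the paper cites Stembridge) that agreement at $\chi_j=q^{J_j}$ for infinitely many $J_j$ forces equality of formal power series.
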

\begin{proof}
Similarly to the proof of Theorem \ref{dualCauchyTheo}, we will actually prove an equivalent identity 
\begin{multline}
\label{CauchyIdentityStoch}
\sum_\lambda\F^{\s*}_{\lambda/\nu}(\chi_1,\dots, \chi_m\mid\bar\S,\S)\F^\s_{\lambda/\mu}(\kappa_1,\dots, \kappa_n\mid\S,\S)\\
=\prod_{i=1}^n\prod_{j=1}^m\frac{(\kappa_i;q)_\infty(s_{i}^2\chi_j;q)_\infty}{(s_{i}^2;q)_\infty(\kappa_i\chi_j;q)_\infty}\sum_\lambda\F^{\s}_{\nu/\lambda}(\kappa_1,\dots,\kappa_n\mid\S,\S)\F^{\s*}_{\mu/\lambda}(\chi_1,\dots,\chi_m\mid\tau^n\bar\S,\tau^n\S),
\end{multline}
which is obtained by rescaling $\F$, $\F^*$ to $\F^{\s}$, $\F^{\s*}$. 

Following the proof of Theorem \ref{dualCauchyTheo}, let
\be
\C^{(n)}(\kappa_1,\dots,\kappa_n\mid\S,\S)=\C(\kappa_1\mid\tau \S,\tau \S)\dots\C(\kappa_n\mid\tau^n\S,\tau^n\S).
\ee
Then the exchange relation of Proposition \ref{exchange} together with the relation $\tau_\S\S=\tau \S$ imply 
\begin{multline*}
\tB^*(1\mid\S,\S)\dots\tB^*(q^{J-1}\mid\S,\S)\C^{(n)}(\kappa_1,\dots,\kappa_n\mid\S,\S)\\
=\prod_{i=1}^n\frac{(\kappa_i;q)_J}{(s_{i}^2;q)_J}\C^{(n)}(\kappa_1,\dots,\kappa_n\mid\S,\S)\tB^*(1\mid\tau^n\S,\tau^n\S)\dots\tB^*(q^{J-1}\mid\tau^n\S,\tau^n\S).
\end{multline*}
Now we can use Proposition \ref{Bfusion} to get
\begin{equation*}
\B^*(q^J\mid\bar\S,\S)\C^{(n)}(\kappa_1,\dots,\kappa_n\mid\S,\S)=
\prod_{i=1}^n\frac{(\kappa_i;q)_\infty(s_{i}^2q^J;q)_\infty}{(s_{i}^2;q)_\infty(\kappa_iq^J;q)_\infty}\C^{(n)}(\kappa_1,\dots,\kappa_n\mid\S,\S)\B^*(q^J\mid\tau^n\bar\S,\tau^n\S).
\end{equation*}
Iterating this equation $m$-times for $J=J_1,\dots, J_m$, we obtain
\begin{multline*}
\B^*(q^{J_m}\mid\bar\S,\S)\dots\B^*(q^{J_1}\mid\bar\S,\S)\C^{(n)}(\kappa_1,\dots,\kappa_n\mid\S,\S)\\
=\prod_{i=1}^n\prod_{j=1}^m\frac{(\kappa_i;q)_\infty(s_{i}^2q^{J_j};q)_\infty}{(s_{i}^2;q)_\infty(\kappa_iq^{J_j};q)_\infty}\C^{(n)}(\kappa_1,\dots,\kappa_n\mid\S,\S)\B^*(q^{J_m}\mid\tau^n\bar\S,\tau^n\S)\dots\B^*(q^{J_1}\mid\tau^n\bar\S,\tau^n\S),
\end{multline*}
which, after taking the matrix element $\langle\nu|\cdot|\mu\rangle$, inserting a summation $\sum_\lambda|\lambda\rangle\langle\lambda|=id$, and using both identities from Proposition \ref{Foperator}, gives the claim of the theorem for $\chi_r=q^{J_r}$.

Up until now we have operated with finite sums of rational functions: one can see\footnote{for example, using Proposition \ref{Bfusion} and the discussion from Section \ref{linearCombSect}} that for $(\chi_1,\dots,\chi_m)=(q^{J_1},\dots, q^{J_m})$ with $\{J_r\}_{r=1}^m\in\Z_{\geq 1}$ both sums in \eqref{CauchyIdentity} have finitely many nonzero terms. But for general parameters $\chi$ the sum in the left-hand side is infinite, so we need to be careful with the general statement. 

We say that a formal power series $f\in K[[x_1,\dots, x_r]]$ over a field $K$ is of \emph{order at least $N$} if coefficients of all monomials $x_1^{a_1}\dots x_r^{a_r}$ such that $\sum_{i=1}^ra_i<N$ are equal to $0$. We claim that $\F_{\lambda/\mu}(\kappa_1,\dots, \kappa_n\mid\Xi,\S)$ is a formal power series in $K[[\kappa_1,\dots, \kappa_n,q]]$ of order at least $\lambda_1-\mu_1-n$, where $K=\mathbb Q_{(\Xi,\S)}$ is the field of rational functions in variables $\Xi=(\xi_0,\xi_1,\dots)$ and  $\S=(s_0, s_1,\dots)$. To see that, note that the matrix coefficients of $\T_a$ are formal power series in $\kappa,q$, that is
\be
\langle \lambda|\T_a(\kappa\mid\Xi,\S)|\mu\rangle\in K[[\kappa,q]],
\ee
while the coefficient of $\T_a$ in the definition \eqref{defC} of the operator $\C(\kappa\mid\Xi,\S)$ has order at least $a-1$ with respect to $\kappa,q$:
\be
\kappa^as_0^a\xi_0^{-a}\frac{(\kappa^{-1} s_0\xi_0;q)_a}{(q;q)_a}=\frac{\prod_{i=1}^a(\kappa\xi^{-1}_0-s^2_0q^{i-1})}{(q;q)_a}.
\ee
Hence, the matrix element
\be
\langle \lambda|\C(\kappa\mid\Xi,\S)|\mu\rangle\in K[[\kappa,q]]
\ee
has order at least $\lambda_1-\mu_1-1$. Therefore, by Proposition \ref{Foperator}, the function $\F_{\lambda/\mu}(\kappa_1,\dots,\kappa_n\mid\Xi,\S)$ is a formal power series in $\kappa_1,\dots, \kappa_n,q$ of order at least $\lambda_1-\mu_1-n$.

Now we can finish the proof. The right-hand side of \eqref{CauchyIdentity} is clearly a formal power series in $\chi,\kappa,q$. For the sum on the left-hand side, the term corresponding to $\lambda$ has order at least $2\lambda_1-\mu_1-\nu_1-n-m$. Moreover, the summand corresponding to $\lambda$ vanishes if $l(\lambda)$ is greater than either $l(\mu)+n$ or $l(\nu)+m$. Overall this means that all but finitely many summands in the left-hand side have order at least $N$ for any $N\geq 1$. Hence both sides of \eqref{CauchyIdentity} are elements of $K[[\kappa_1, \dots, \kappa_n, \chi_1,\dots, \chi_m,q]]$, which are equal after substitution $(\chi_1,\dots,\chi_m)=(q^{J_1},\dots, q^{J_m})$ for any integer vector $\(J_r\)_{r=1}^m\in\Z^m_{\geq 1}$\footnote{Note that such a substitution is well-defined and it behaves well with respect to the convergence of formal power series, as well as the notion of order defined earlier. Indeed, the substitution cannot reduce the order of a formal power series, hence the substitution preserves convergence.}. An iterative application of the following elementary statement finishes the proof.
\begin{lem}[{\cite[Lemma 3.2]{Ste88}}]
 Let $F(z,q)$ and $G(z,q)$ be formal power series satisfying $F(q^J,q)=G(q^J,q)$ for infinitely many integers $J\geq 1$. Then $F(z,q)=G(z,q)$.
\end{lem}
\end{proof}

\begin{rem}
The convergence from Theorem \ref{Cauchy} also holds numerically if 
\be
|\kappa_1|, \dots, |\kappa_n|\leq\kappa,\qquad |\chi_1|,\dots, |\chi_m|\leq\chi,\qquad 0<|q|,\kappa\chi<1.
\ee
For the right-hand side of \eqref{CauchyIdentity} the convergence is immediate, while for the left-hand side this can be verified as follows: Using Proposition \ref{rowValue}, the one variable functions $\F_{\lambda/\mu}(\kappa_i\mid\Xi,\S)$ can be bounded by $C |\kappa_i|^{|\lambda|}\frac{\(\tau_\S\Xi\)^{\mu}}{\(\Xi\)^{\lambda}}$, where $C$ is a positive constant not depending on $\lambda,\mu$ (to reach this bound it is enough to bound a finite number of possibly appearing $q$-Pochhammer symbols $(x,q)_a$ by a uniform constant and rearrange the remaining terms). Then, by the branching rule of Proposition \ref{branching}, we obtain an upper bound 
\be
|\F_{\lambda/\mu}(\kappa_1, \dots, \kappa_n\mid\Xi,\S)|\leq C |\lambda|^d \kappa^{|\lambda|}/(\Xi)^\lambda,
\ee
for some $C,d>0$ not depending on $\lambda$. This bound is sufficient for proving the convergence of the left-hand side of \eqref{CauchyIdentity}. 
\end{rem}

\begin{rem} \label{conjecture}
One can note a similarity between Theorem \ref{Cauchy} and the $\Xi=\S$ case of the Cauchy identity for inhomogeneous spin Hall-Littlewood functions \cite[Corollary 4.10]{BP16b}. Thus, one might hope that it is possible to remove the restriction $\Xi=\S$, although we don't know how to do that.
\end{rem}

By setting $\mu$ and $\nu$ equal to $\varnothing$ one immediately obtains a Cauchy type summation identity:
\begin{cor}\label{CauchyCor}The following identity holds in the space of formal power series in $\kappa_1,\dots, \kappa_n, \chi_1,\dots, \chi_m,q$ with rational in $\S$ coefficients:
\begin{equation*}
\sum_\lambda\F^*_{\lambda}(\chi_1,\dots, \chi_m\mid\bar\S,\S)\F_{\lambda}(\kappa_1,\dots, \kappa_n\mid\S,\S)=\prod_{i=1}^n\prod_{j=1}^m\frac{(\kappa_i;q)_\infty(s_{i}^2\chi_j;q)_\infty}{(s_{i}^2;q)_\infty(\kappa_i\chi_j;q)_\infty}.
\end{equation*}
The same identity holds numerically for $0<|q|,|\kappa_1|,\dots,|\kappa_n|,|\chi_1|,\dots,|\chi_m|<1$.
\end{cor}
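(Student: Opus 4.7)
The plan is to derive this as an immediate specialization of Theorem \ref{Cauchy}, applying it at $\mu = \nu = \varnothing$ and then checking both sides simplify as claimed.

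First I would look at the right-hand side of \eqref{CauchyIdentity} when $\nu = \mu = \varnothing$. By the corollary following Proposition \ref{rowValue}, the skew function $\F_{\nu/\lambda}(\kappa_1,\dots,\kappa_n\mid\S,\S)$ vanishes unless $\lambda \subset \nu$; in particular $\F_{\varnothing/\lambda} = \mathbb{1}_{\lambda=\varnothing}$, and similarly $\F^*_{\varnothing/\lambda}(\chi_1,\dots,\chi_m\mid\tau^n\bar\S,\tau^n\S) = \mathbb{1}_{\lambda=\varnothing}$ (since $\F^*$ differs from $\F$ only by a prefactor that equals $1$ at $\lambda=\varnothing$). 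So the entire sum on the right of \eqref{CauchyIdentity} collapses to a single term equal to $1$, and the prefactor becomes the product in Corollary \ref{CauchyCor}. On the left-hand side, one uses $\F^*_{\lambda/\varnothing} = \F^*_\lambda$ and $\F_{\lambda/\varnothing} = \F_\lambda$ by definition.

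This yields the formal power series identity in $\kappa_1,\dots,\kappa_n,\chi_1,\dots,\chi_m,q$ over the field $\mathbb{Q}_{(\S)}$ directly. Since Theorem \ref{Cauchy} is already established in that ring, no new algebraic argument is needed.

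For the numerical statement, I would establish term-by-term summability of the left-hand side under the stated assumptions $0 < |q|, |\kappa_i|, |\chi_j| < 1$. Using the one-variable formula in Proposition \ref{rowValue}, for fixed parameter sequences the $q$-Pochhammer factors appearing in $\F_{\lambda/\mu}(\kappa\mid\S,\S)$ are bounded uniformly, so one obtains a bound of the form $|\F_{\lambda/\mu}(\kappa\mid\S,\S)| \leq C|\kappa|^{|\lambda|-|\mu|}(-\S)^{\mu}/(-\S)^{\lambda}$. Iterating this through the branching rule (Proposition \ref{branching}) yields $|\F_\lambda(\kappa_1,\dots,\kappa_n\mid\S,\S)| \leq C'|\lambda|^{d}\kappa^{|\lambda|}/(-\S)^{\lambda}$ with $\kappa := \max_i|\kappa_i|$, and analogously $|\F^*_\lambda(\chi_1,\dots,\chi_m\mid\bar\S,\S)| \leq C''|\lambda|^{d'}\chi^{|\lambda|}(-\S)^{\lambda}$ with $\chi:=\max_j|\chi_j|$; the factors $(-\S)^{\lambda}$ cancel in the product. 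The resulting bound $|\F^*_\lambda\F_\lambda| \leq C'''|\lambda|^{d+d'}(\kappa\chi)^{|\lambda|}$ combined with $\kappa\chi<1$ makes the sum absolutely convergent (the number of partitions of size $N$ grows subexponentially). Thus the identity, already proved as a formal series identity, holds numerically.

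The only potentially tricky step is verifying the numerical convergence bound cleanly; this is essentially a bookkeeping exercise using the one-variable formula and the branching rule, and the paper's own remark after Theorem \ref{Cauchy} indicates the argument proceeds exactly along these lines.
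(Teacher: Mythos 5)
Your proposal is correct and matches the paper's own route: the paper obtains the corollary by setting $\mu=\nu=\varnothing$ in Theorem \ref{Cauchy} (the right-hand sum collapsing to $1$ since $\F_{\varnothing/\lambda}=\1_{\lambda=\varnothing}$), and the numerical statement is handled exactly by the bound $|\F_{\lambda/\mu}(\kappa_1,\dots,\kappa_n\mid\Xi,\S)|\leq C|\lambda|^d\kappa^{|\lambda|}/(\Xi)^\lambda$ derived from Proposition \ref{rowValue} and the branching rule, as in the remark following that theorem. No gaps.
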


\begin{rem}\label{qGauss}
As noted in \cite[Section 7.2]{BW17}, in the simplest case when $n=m=1$ Corollary \ref{CauchyCor} reads
\be
\sum_{a\geq 0} (-\kappa/ s_0)^a\frac{(\kappa^{-1}s_1^2;q)_a}{(s_1^2;q)_a} \cdot (-\chi s_0)^a\frac{(\chi^{-1};q)_a}{(q;q)_a}=\frac{(\kappa;q)_\infty(s_{1}^2\chi;q)_\infty}{(s_{1}^2;q)_\infty(\kappa\chi;q)_\infty},
\ee
which is equivalent to the \emph{$q$-Gauss summation identity}. Actually, we have
\be
\sum_{a\geq 0} (-\kappa/ \xi_0)^a\frac{(\kappa^{-1}s_1\xi_1;q)_a}{(s_1^2;q)_a} \cdot (-\chi \xi_0)^a\frac{(\chi^{-1}s_1\xi_1^{-1};q)_a}{(q;q)_a}=\frac{(\kappa s_1/\xi_1;q)_\infty(\chi s_1\xi_1;q)_\infty}{(s_{1}^2;q)_\infty(\kappa\chi;q)_\infty},
\ee
which, along the lines of Remark \ref{conjecture}, may correspond to a hypothetical more general version of Theorem \ref{Cauchy} with arbitrary $\Xi$.
\end{rem}

\end{document}